\definecolor{color1}{RGB}{31,55, 61}
\definecolor{background}{RGB}{238,237,229}
\definecolor{comp}{RGB}{98,99,101}
\definecolor{phys}{RGB}{31,55,61}
\definecolor{math}{RGB}{133,63,72}
\definecolor{urban}{RGB}{51,95,60}
\definecolor{color1}{RGB}{133,63,72}
\newcommand\norm[1]{\left\lVert#1\right\rVert}
\newtheorem{prop}{Proposition}[section]
\newtheorem{definition}{Definition}[section]
\newtheorem{theorem}{Theorem}[section]
\newtheorem{lemma}{Lemma}[section]
\newtheorem{corro}{Corrolary}[section]
\theoremstyle{remark}
\numberwithin{equation}{section}
\begin{document}
\title{Brownian fluctuations of the interface in a system with two linear attracted components and white noises}
\author{Tran Hoa Phu}
\newcommand{\Addresses}{{
  \bigskip
  \footnotesize

  Tran Hoa Phu, \textsc{Department of Mathematics, Gran Sasso Science Institute, Viale Francesco Crispi, 7 - 67100 L'Aquila (ITALY)}\par\nopagebreak
  \textit{E-mail address}: \texttt{hoaphu.tran@gssi.it}
}}

\maketitle
\begin{abstract}
We concern the analysis of the long time behavior of interfaces in systems with two components. Each component evolves according to 1-d Allen-Cahn equation with Neumann boundary conditions, perturbed by small space-time white noise and with symmetric double well potential in the interval $[-\epsilon^{-1},\epsilon^{-1}]$. The two components interact with each other by an attractive linear force. Instantons are defined as the stationary solution of the Allen-Cahn equation without noise which connects two pure phases. We prove that for time $t=\epsilon^{-1}$, in the limit $\epsilon \rightarrow 0$, when initial states are close to an instanton, two components stay close to the same instanton, whose center moves as a Brownian motion.
\end{abstract}
\section{Introduction and main result}
 We consider the processes given as solutions of the initial value problem for stochastic partial differential system on the domain $\Omega_{\epsilon} \times [0,+\infty); \Omega_\epsilon : =[-\epsilon^{-1},\epsilon^{-1}]$ 
\begin{equation}\label{classical system}
\begin{aligned}
&\partial_t m_{1,t} =\dfrac{1}{2} \partial_{xx}m_{1,t} -V'(m_{1,t}) + \lambda(m_{2,t}-m_{1,t})+\sqrt{\epsilon} \dot{\alpha_1}(x,t), \quad x \in \Omega_{\epsilon}, t \geq 0\\
&\partial_t m_{2,t} =\dfrac{1}{2} \partial_{xx}m_{2,t} -V'(m_{2,t}) +  \lambda(m_{1,t}-m_{2,t})+ \sqrt{\epsilon} \dot{\alpha_2}(x,t), \quad x \in \Omega_{\epsilon}, t \geq 0\\
&\partial_x m_i(-\epsilon^{-1},t) = \partial_x m_i(\epsilon^{-1},t) = 0, \quad \quad t \geq 0 \quad \forall i=1,2 \\
&m_1(x,0) = m_{1,0}(x), \; m_2(x,0) = m_{2,0}(x) \quad x \in \Omega_{\epsilon}
\end{aligned}
\end{equation}
where $V'(m) = m^3 -m$ is the derivative with respect to $m$ of the double well polynomial $V(m) = \dfrac{1}{4}m^4 -\dfrac{1}{2}m^2$ and $\lambda$ is a positive parameter. In addition, $m_{1,0}$ and $m_{2,0} \in C(\Omega_\epsilon)$ are the initial datums. The terms $\dot{\alpha_1}(x,t)$ and $\dot{\alpha_2}(x,t)$ are two independent space-time white noises, i.e,  
\begin{equation}
\mathbb{E}(\dot{\alpha_i} (x,t)) = 0, \quad 
\mathbb{E}\Big(\dot{\alpha_i}(x,t)\dot{\alpha_i}(x',t')\Big)=\delta(x-x')\delta(t-t') \quad \forall i =1,2
\end{equation}

It is seen that $m_{1,t}$ and $m_{2,t}$ evole the same symmetric double well potential $V$ and they are coupled by an attractive linear force. Therefore, it is worth recalling a few result of Allen-Cahn equation on finite domain $\Omega_\epsilon$
\begin{equation}\label{Allen Cahn}
\partial_t m = \dfrac{1}{2} \partial_{xx}m - V'(m)
\end{equation} 
This PDE has two solutions $1$ and $-1$ that are two equilibrium homogeneous solution associated to two minima of the potential $V$. They are two stable solutions of the flow defined as the Cauchy problem of the equation. Furthermore, there are two special unstable stationary solutions $\pm m^{\epsilon}$ with $m^{\epsilon}$ is strictly increaing and antisymmetric, such that $m^\epsilon \rightarrow \overline{m}$ pointwise, as $\epsilon \rightarrow 0$, where $\overline{m}(x):= \tanh(x)$ is a stationary solution of the Allen-Cahn equation in the whole $\mathbb{R}$. The stability of $m^{\epsilon}$ of the flow $T_t^{\epsilon}$ defined by \eqref{Allen Cahn} was analysed by Fusco and Hale \cite{Fusco1989} and Carr and Pego \cite{Carr}. The  unstable manifolds of $m^{\epsilon}$ has dimension 1 and  the flow on it includes two orbits that connects $m^{\epsilon}$ to two stable solutions $1$ and $-1$ as $t \rightarrow \infty$. The motion along this manifold is very slow as shown in Fusco and Hale\cite{Fusco1989}. When small white noise is added to \eqref{Allen Cahn}, in the limit $\epsilon \rightarrow 0$, the analysis in \cite{BraDP}, \cite{Brassesco1994}, \cite{Brassesco1998a}, \cite{Funaki1995} shows that when initial datum is close to $m^{\epsilon}$, the solution is still close to $m^\epsilon(x + \sqrt{\epsilon}b_{T_\epsilon})$, here $b_{T_\epsilon}$ is a Brownian motion. This result for time $T_\epsilon = \epsilon^{-\gamma},\gamma<1/3$ was obtained by S. Brassesco \cite{Brassesco1994} by considering the problem in $\mathbb{R}$, in this setting, we see the infinitesimal shifts of the displacement of the instanton. The value $T_\epsilon = \epsilon^{-1}$ was derived by Presutti, Masi and Brassesco\cite{BraDP}. In addition, \cite{BraDP} has shown that the time $T_\epsilon = \epsilon^{-1}$ is needed to observe finite shifts.

Our model is an extension to the one studied in \cite{BraDP} in a sense that we extend from one component to two components and let them couple in a linear way.  In fact, we see that two components $m_{1,t}$ and $m_{2,t}$ tend to attract each other by the linear force as $\lambda > 0$. Since the crucial step used in \cite{BraDP} is  investigating the long time behavior of the linearized equation \eqref{Allen Cahn} around the instanton, it is thus expected that our system can be solved if we are are able to investigate the long time behavior of the linearized system \eqref{classical system} around the instanton. This procedure is not difficult because of linear interaction between two components. The content of this paper is to confirm this observation. We state the main theorem
\begin{theorem}\label{main theorem}
Let $m_t = \big(m_{1,t},m_{2,t}\big)$ be the solution of system \eqref{classical system} with initial condition $m_0 = \big( m_{1,0},m_{2,0}\big)$. Let $0<\zeta<1, \;\epsilon>0$ and suppose that $m_{1,0}$ and $m_{2,0}$ are continuous, satisfying  Neumann boundary condition and there is $x_{0}$ with $|x_{0}| \leq (1-\zeta)\epsilon^{-1}$
\begin{equation}
\norm{m_{i,0} - \overline{m}_{x_{0}}}_{\epsilon} \leq \sqrt{\epsilon} \quad \forall i=1,2
\end{equation}
Then there is a continuous process $\xi_t$ adapted to $m_{1,t}$ and $m_{2,t}$, such that for any $T>0$
\begin{equation}
\lim_{\epsilon \rightarrow 0^+} P^{\epsilon} \bigg( \sup_{x\in \Omega_{\epsilon}} \big|m_{i,t}(x) - \overline{m}(x-\xi_{t}) \big| > \epsilon^{1/4} \quad \forall t \leq \epsilon^{-1}T, \quad \forall  i =1,2 \bigg) = 0 
\end{equation}
Furthermore, calling $\mathbb{P}^{\epsilon}$ be the law on $C([0,T],\mathbb{R})$ of $\xi_{\epsilon^{-1} t} - x_0$. Then $\mathbb{P}^{\epsilon}$ converges weakly to a Brownian motion starting from $0$ with diffusion coefficient $3/8$. 
\end{theorem}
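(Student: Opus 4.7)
The plan is to diagonalise the linear interaction by passing to the sum and difference variables
\[
u_t := \tfrac{1}{2}(m_{1,t}+m_{2,t}), \qquad w_t := m_{1,t}-m_{2,t},
\]
driven by the independent space-time white noises $\dot{\beta}:=(\dot{\alpha}_1+\dot{\alpha}_2)/\sqrt{2}$ and $\dot{\gamma}:=(\dot{\alpha}_1-\dot{\alpha}_2)/\sqrt{2}$. Using $V'(m_1)+V'(m_2)=2V'(u)+O(w^2)$ and $V'(m_1)-V'(m_2)=A_t(x)\,w_t$ with $A_t(x):=\int_0^1 V''(u_t-\tfrac{w_t}{2}+sw_t)\,ds$, the system becomes
\[
\partial_t u_t = \tfrac{1}{2}\partial_{xx} u_t - V'(u_t) + O(w_t^2) + \sqrt{\epsilon/2}\,\dot{\beta}, \qquad \partial_t w_t = \tfrac{1}{2}\partial_{xx}w_t - (A_t+2\lambda)w_t + \sqrt{2\epsilon}\,\dot{\gamma}.
\]
At leading order $u_t$ solves a scalar stochastic Allen--Cahn equation with effective noise intensity $\sqrt{\epsilon/2}$, while $w_t$ obeys a linearised equation with the additional dissipation $-2\lambda w_t$. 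The strategy is to make this decoupling quantitative on the time scale $\epsilon^{-1}T$ and then invoke the scalar machinery of \cite{BraDP} for $u_t$.

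The first step is the a priori control of $w_t$. The operator $L:=\tfrac{1}{2}\partial_{xx}-V''(\overline{m}_{\xi_t})$ with Neumann boundary conditions on $\Omega_\epsilon$ has $\overline{m}'_{\xi_t}$ as its ground-state eigenfunction with eigenvalue vanishing as $\epsilon\to 0$, and its remaining spectrum is uniformly bounded above by a negative constant. Since $A_t(x)$ is a bounded perturbation of $V''(\overline{m}_{\xi_t})$ as long as $u_t,w_t$ stay near the instanton, the operator $\tfrac{1}{2}\partial_{xx}-A_t-2\lambda$ has spectrum bounded above by $-2\lambda+o(1)$, and in particular generates a contractive semigroup on $L^2(\Omega_\epsilon)$. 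A Duhamel bootstrap combining this contractivity with the hypothesis $\norm{w_0}_\epsilon\leq 2\sqrt{\epsilon}$ and standard bounds on the stochastic convolution driven by $\sqrt{2\epsilon}\,\dot{\gamma}$ yields, with probability tending to one, $\sup_{t\leq \epsilon^{-1}T}\norm{w_t}_\infty\leq \epsilon^{1/2-\eta}$ for some $\eta>0$. Consequently the $O(w_t^2)$ coupling term in the $u$-equation is of order $\epsilon^{1-2\eta}$ and is negligible over the relevant time scale.

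With $w_t$ controlled, apply the BraDP reduction to $u_t$. Define the instanton centre $\xi_t$ by the orthogonality condition $\langle u_t-\overline{m}_{\xi_t},\overline{m}'_{\xi_t}\rangle_\epsilon=0$ with $\xi_0=x_0$, and derive by Itô's formula an SDE for $\xi_t$ whose martingale part has infinitesimal quadratic variation
\[
\frac{(\epsilon/2)\,\norm{\overline{m}'_{\xi_t}}_\epsilon^{2}}{\norm{\overline{m}'_{\xi_t}}_\epsilon^{4}}\,dt \;\xrightarrow[\epsilon\to 0]{}\; \frac{\epsilon/2}{\norm{\overline{m}'}_{L^2(\mathbb{R})}^{2}}\,dt=\frac{3\epsilon}{8}\,dt,
\]
using $\norm{\overline{m}'}_{L^2(\mathbb{R})}^{2}=\int(1-\tanh^2 x)^2\,dx=4/3$. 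The time rescaling $t\mapsto \epsilon^{-1}t$ then delivers the announced diffusion coefficient $3/8$. The drift and projection-correction terms in the SDE for $\xi_t$ are controlled, exactly as in \cite{BraDP}, by the spectral gap of $L$ on the complement of $\overline{m}'_{\xi_t}$, with the boundary contributions and the $O(w_t^2)$ coupling absorbed into the error using the previous step. Finally, since $m_{i,t}=u_t\pm\tfrac{1}{2}w_t$ and both $\norm{u_t-\overline{m}_{\xi_t}}_\infty$ and $\norm{w_t}_\infty$ are shown to be $o(\epsilon^{1/4})$, the pathwise sup-norm closeness claim follows. I expect the main technical obstacle to be the uniform-in-time bound on $w_t$ over the horizon $\epsilon^{-1}T$: without the $-2\lambda$ dissipation the negative values of $V''(\overline{m})$ near $x=\xi_t$ would drive exponential growth, so the bootstrap interlocking the $u$-error with the $w$-norm, and the effect of the slowly drifting ground-state eigenvalue of $L$ on the finite domain $\Omega_\epsilon$, must be handled carefully.
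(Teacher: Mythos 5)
Your sum/difference diagonalisation is exactly the structural idea of the paper (its Corollary 2.1 and the Section 3 sketch work with $(u_1+u_2)/2$ and $u_1-u_2$, the difference acquiring the extra $e^{-2\lambda t}$ damping), the center is likewise defined through the average, and your identification of the coefficient $3/8=\tfrac12\cdot\tfrac34$ via $\norm{\overline{m}'}_{L^2}^2=4/3$ agrees with the paper. The deviation is in how you propose to close the argument on the time scale $\epsilon^{-1}T$, and there the sketch has a genuine gap.

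The gap is the treatment of the drift of $\xi_t$ over the full horizon. You argue that the coupling term is $O(w_t^2)=O(\epsilon^{1-2\eta})$ and hence ``negligible over the relevant time scale,'' and that the remaining drift and projection-correction terms in the It\^o SDE for $\xi_t$ are ``controlled by the spectral gap.'' Neither claim closes the proof: a forcing of pointwise size $\epsilon^{1-2\eta}$ (and the same is true of the intrinsic quadratic terms $3\overline{m}_{\xi}\phi^2+\phi^3$ with $\phi=u_t-\overline{m}_{\xi_t}=O(\epsilon^{1/2-a})$) can shift the center by an amount of order $\epsilon^{-1}\cdot\epsilon^{1-2\eta}$, which does not vanish, and the spectral gap is of no help here because these contributions act precisely along the null direction $\overline{m}'_{\xi_t}$, which the gap does not damp. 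This is the actual difficulty of reaching time $\epsilon^{-1}$ (crude estimates of this kind only give shorter time scales, cf. the $\epsilon^{-\gamma}$, $\gamma<1/3$ result cited in the introduction). The paper, following \cite{BraDP}, resolves it not by size bounds but by a cancellation: it discretises time into blocks of length $T_\epsilon\approx\epsilon^{-1/10}$, compares the true process on each block with the process restarted from the exact instanton $\overline{m}_{\xi}$ (Lemma 5.1, using the barrier lemma for localisation), and then uses the reflection symmetry $Gf(x)=-f(2x_0-x)$ of the double-well dynamics to show that the conditional mean of the per-block center increment is $O(\epsilon^n)$ for every $n$ (Lemma 5.2), so that only the martingale part survives in the CLT-type criteria. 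Your proposal contains no substitute for this symmetry/averaging step, and for the same reason you cannot invoke the scalar result of \cite{BraDP} as a black box for the $u$-equation, since that theorem does not cover the extra $\tfrac34 u_t w_t^2$ forcing; one must rerun the block argument with the coupling included, exactly as the paper does. (Deriving a rigorous It\^o SDE for the implicitly defined center is also substantial extra work that the paper's discretised route avoids, but that is a matter of technique; the missing cancellation mechanism for the accumulated drift is the substantive gap.)
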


\section{Linearized integral system around instanton}
We refer to Walsh\cite{Walsh2006}, Faris and Jona-Lasinio \cite{Faris1982} and Da Prato and Zabczyk \cite{Zabc} for the SPDE's theory applied to \eqref{classical system}. Since the solutions are indifferentiable due to the availability of white noises so any solutions should be understood as the weak solutions. We have to rewrite the above system into integral version to make sense of white noises $\dot{\alpha_1}(x,t)$ and $\dot{\alpha_2}(x,t)$


\begin{equation}\label{eqn: int1}
\begin{aligned}
m_{1,t} &= H_t^{(\epsilon)}m_{1,0} - \int_0^t H_{t-s}^{(\epsilon)}\Big[m_{1,s}^3 - m_{1,s} + \lambda\Big(m_{1,s}-m_{2,s}\Big)\Big]ds + \sqrt{\epsilon}Z_{1,t}^{(\epsilon)} \\
m_{2,t} &= H_t^{(\epsilon)}m_{2,0} - \int_0^t H_{t-s}^{(\epsilon)}\Big[m_{2,s}^3 - m_{2,s} + \lambda\Big(m_{2,s}-m_{1,s}\Big)\Big]ds + \sqrt{\epsilon}Z_{2,t}^{(\epsilon)}
\end{aligned}
\end{equation}
where
\begin{equation}
Z_{1,t}^{(\epsilon)}(x) = \int_0^t \int_{-\epsilon^{-1}}^{\epsilon^{-1}} H_{t-s}^{(\epsilon)}(x,y)\dot{\alpha_1}(y,s)dyds
\end{equation}
\begin{equation}
Z_{2,t}^{(\epsilon)}(x) = \int_0^t \int_{-\epsilon^{-1}}^{\epsilon^{-1}} H_{t-s}^{(\epsilon)}(x,y)\dot{\alpha_2}(y,s)dyds
\end{equation}
are two independent Gaussian random variables with mean zero and variance 
\begin{equation*}
\mathbb{E}\Big(Z_{i,t}^{(\epsilon)}\Big)=0,\quad \mathbb{E}\Big({Z_{i,t}^{(\epsilon)2}(x)}\Big)  = \int_0^{t} \int_{-\epsilon^{-1}}^{\epsilon^{-1}} H_{t-s}^{(\epsilon)2}(x,y)dyds \quad \forall i=1,2
\end{equation*}
with $H^{(\epsilon)}_t$ is the Green operator for the heat equation with Neumann boundary conditions (N.b.c) in $\Omega_{\epsilon}$.

Existence and uniqueness solution of system \eqref{eqn: int1} in bounded domain follows by extending results of Faris and Jona-Lasinio \cite{Faris1982}. We say that $(m_{1,t},m_{2,t})$ is a solution of \eqref{classical system} if it satisfies \eqref{eqn: int1} and it defines a continuous process $m_{i,t},\; t\geq 0,$ with values in $C(\Omega_\epsilon)$.
\begin{definition}
It is convenient to extend the process $\big(m_{1,t},m_{2,t}\big)$ of the system \eqref{eqn: int1} to the whole real line since we could exploit stability instanton properties. The useful technique used for extension was introduced in details by ref. $[1]$. We now recall it,  
for any continuous function $m$ in $ \Omega_{\epsilon}$, we define its extension $\widehat{m}$ to $\mathbb{R}$ by reflecting $m$ through $\epsilon^{-1}$ and then extending to $\mathbb{R}$ with period $4\epsilon^{-1}$.
\begin{equation}\label{extended function}
\begin{aligned}
\widehat{m}(x) := \sum_{k \in \mathbb{Z}} \bigg( &m(x-4k\epsilon^{-1})\chi_{[(4k-1)\epsilon^{-1},(4k+1)\epsilon^{-1}]}(x)\\
& + m((4k+2)\epsilon^{-1}-x)\chi_{[(4k+1)\epsilon^{-1},(4k+3)\epsilon^{-1}]}(x)\bigg)
\end{aligned}
\end{equation}
We say that a function $\widehat{m} \in C^0(\mathbb{R})$ defined on the whole line satisfies N.b.c if it is the extension of a function in $\Omega_{\epsilon}$. 

We define
\begin{equation*}
Z_{1,t} = \widehat{Z}_{1,t}^{(\epsilon)}, \quad Z_{2,t} = \widehat{Z}_{2,t}^{(\epsilon)} 
\end{equation*}
\end{definition}
Let $H_t$ be the Green operator for the heat equation on the whole line.
\begin{equation}\label{eqn 2.6}
H_t^{(\epsilon)}m(x) = \int_{-\infty}^{\infty}\dfrac{1}{\sqrt{2 \pi t}}\exp(-(x-y)^2/2t) \widehat{m}(y)dy \equiv H_t \widehat{m}(x)
\end{equation}
We now deal with extended process $(m_{1,t},m_{2,t})$ on unbounded domain $\mathbb{R}\times \mathbb{R^+}$
\begin{prop}\label{system for m-i,t for whole line}

For any $\epsilon >0$, for any $m_{1,0},m_{2,0} \in C^0(\mathbb{R})$ that satisfy N.b.c in $
\Omega_{\epsilon}$ and for any $Z_{1,t}(x),Z_{2,t}(x)$ continuous in both variables and satisfying N.b.c, there is a unique continuous solution $m_{1,t}$ and $m_{2,t}$ 
\begin{equation}\label{eqn: integral_on_R_1}
\begin{aligned}
&m_{1,t} = H_t m_{1,0} - \int_0^t H_{t-s}\Big[m_{1,s}^3 - m_{1,s} + \lambda\Big(m_{1,s}-m_{2,s}\Big)\Big]ds + \sqrt{\epsilon}Z_{1,t}\\
&m_{2,t} = H_tm_{2,0} - \int_0^t H_{t-s}\Big[m_{2,s}^3 - m_{2,s} + \lambda\Big(m_{2,s}-m_{1,s}\Big)\Big]ds + \sqrt{\epsilon}Z_{2,t}
\end{aligned}
\end{equation}
Moreover $m_{1,t} = \widehat{m}_{1,t}^{(\epsilon)},  m_{2,t} = \widehat{m}_{2,t}^{(\epsilon)}$ where $m_{1,t}^{(\epsilon)}$ and $m_{2,t}^{(\epsilon)}$ solves \eqref{eqn: int1} with $Z_{1,t}^{(\epsilon)}$,   $Z_{2,t}^{(\epsilon)}$ obtained by restricting $Z_{1,t},Z_{2,t}$ to $\Omega_{\epsilon}.$ In addition, $m_{1,0}^{(\epsilon)}$ and  $m_{2,0}^{(\epsilon)}$ also obtained by restricting $m_{1,0}$ and $m_{2,0}$ to $\Omega_{\epsilon}$.
\end{prop}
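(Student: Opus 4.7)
The plan is to prove existence and uniqueness by a standard fixed-point argument on the Banach space $X_T := C([0,T], C_b(\mathbb{R}))^2$ equipped with the sup norm $\|(m_1,m_2)\|_{X_T} := \sup_{t \leq T,\, x \in \mathbb{R},\, i=1,2} |m_{i,t}(x)|$, and then to verify the equivalence with the bounded-domain system \eqref{eqn: int1} using the identity \eqref{eqn 2.6}. The key preliminary observation is that any N.b.c.~extension $\widehat{m}$ is merely a periodic reflection, so $\|\widehat{m}\|_{L^\infty(\mathbb{R})} = \|m\|_{C(\Omega_\epsilon)} < \infty$; in particular the initial data $m_{1,0}, m_{2,0}$ and, by assumption, $Z_{1,t}, Z_{2,t}$ are bounded on $\mathbb{R} \times [0,T]$.

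To construct a solution I first truncate the cubic by replacing $m^3 - m$ by $f_R(m):=\chi_R(m)(m^3-m)$, where $\chi_R$ is a smooth cut-off equal to $1$ on $[-R,R]$ and supported in $[-2R,2R]$; the resulting nonlinearity is globally Lipschitz. Together with the fact that $H_t$ is a contraction on $L^\infty(\mathbb{R})$ and that the coupling term $\lambda(m_i-m_j)$ is linear, this makes the map $\Phi_R : X_T \to X_T$ defined by the right-hand side of \eqref{eqn: integral_on_R_1} a contraction on a small interval $[0,\tau]$ with $\tau = \tau(R,\lambda)$, and a classical step-by-step iteration gives a unique $(m_{1,t}^R, m_{2,t}^R) \in X_T$ for every $T>0$.

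Next I remove the truncation via an a priori sup-norm bound. Applying $\|\cdot\|_\infty$ to both lines of \eqref{eqn: integral_on_R_1} and using that $H_t$ preserves sup norms, one controls the linear coupling by $2\lambda \|(m_1^R,m_2^R)\|_\infty$; the cubic, bounded by $|m|^3$, can be dominated by comparison with an ODE of Bernoulli type (or by a direct Gronwall argument after an auxiliary truncation level), yielding an upper bound depending only on $\|m_{i,0}\|_\infty$, $\lambda$, $\sqrt{\epsilon}\sup_{t\leq T}\|Z_{i,t}\|_\infty$ and $T$, not on $R$. Choosing $R$ larger than this bound deactivates the truncation, so $(m_1^R, m_2^R)$ solves the original \eqref{eqn: integral_on_R_1}. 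Uniqueness follows from Gronwall applied to the difference of two solutions, using the local Lipschitz property of $m \mapsto m^3$ on the $L^\infty$ ball supplied by the same a priori bound.

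Finally, for the equivalence with \eqref{eqn: int1}, I use the fact from \eqref{eqn 2.6} that $H_t$ and $H_t^{(\epsilon)}$ intertwine the extension operation, i.e.~$\widehat{H_t^{(\epsilon)}m} = H_t \widehat{m}$; since $\widehat{m^3}=\widehat{m}^3$ and the extension is linear, applying $\widehat{\,\cdot\,}$ to the bounded-domain equation produces exactly \eqref{eqn: integral_on_R_1} for $(\widehat{m_{1,t}^{(\epsilon)}},\widehat{m_{2,t}^{(\epsilon)}})$, and by uniqueness this must coincide with the solution built above. Conversely, restriction to $\Omega_\epsilon$ of a whole-line solution yields a solution of \eqref{eqn: int1}. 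The main obstacle I expect is the superlinearity of the cubic on an unbounded domain; once one exploits that N.b.c.~extensions preserve $L^\infty$ norms, this ceases to be a real difficulty and everything reduces to routine contraction and Gronwall estimates.
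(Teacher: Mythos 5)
Your overall route --- extend the data by reflection so everything is bounded on $\mathbb{R}$, run a contraction argument in the sup norm for a truncated nonlinearity, remove the truncation by an a priori $L^\infty$ bound, prove uniqueness by Gronwall, and then identify the whole-line solution with the extension of the bounded-domain one through the identity \eqref{eqn 2.6} together with uniqueness --- is essentially the same standard argument the paper invokes, since the paper simply outsources existence and uniqueness to an extension of \cite{Doering1987} and \cite{Faris1982} and then uses \eqref{eqn 2.6} to pass between \eqref{eqn: integral_on_R_1} and \eqref{eqn: int1}. Your equivalence step is fine: the extension commutes with the cubic and with the heat semigroup in the sense $H_t^{(\epsilon)}m = H_t\widehat{m}$, and the uniqueness argument also supplies the point (stated but not argued in the paper) that the whole-line solution inherits the N.b.c.\ symmetry from the data, because its reflected--periodized version solves the same equation with the same data.

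There is, however, a genuine gap at the step where you discard the truncation. You propose to bound the cubic by $|m|^3$ and compare with ``an ODE of Bernoulli type'' or a direct Gronwall estimate, claiming a bound on $[0,T]$ depending only on the data, $\lambda$, $\sqrt{\epsilon}\sup_{t\le T}\|Z_{i,t}\|_\infty$ and $T$. With the absolute value of the cubic the comparison ODE is $\dot y = C+\lambda' y+y^3$, which blows up in finite time; for $T$ beyond that blow-up time the argument produces no bound at all, so the truncation cannot be removed and you do not obtain the global-in-time solution the proposition asserts (the paper needs the solution up to times of order $\epsilon^{-1}$ and $\epsilon^{-2}$). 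The estimate must exploit the one-sided, dissipative structure of $-V'(m)=m-m^3$: when $m$ exceeds a fixed level the cubic term pushes it back down. Concretely, one uses the positivity of the heat kernel and a comparison/stopping-time argument of exactly the kind carried out in Theorem \ref{2D:Comparision theorem} and in the lemma yielding \eqref{bounded by 2} (or in \cite{Doering1987}), comparing the solution with the spatially homogeneous dissipative dynamics after subtracting the bounded noise contribution. Once the a priori bound is obtained this way, your removal of the truncation and the local-Lipschitz Gronwall uniqueness go through unchanged; Gronwall applied to $\|m\|_\infty^3$ alone does not.
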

\begin{proof}
By extending the results in \cite{Doering1987}, it is not difficult to show that \eqref{eqn: integral_on_R_1} has a uniqueness continuous solution, which also satisfies N.b.c in $\mathbb{R}$ since $m_{1,0},m_{2,0}$ and $Z_{1,t},Z_{2,t}$ satisfy N.b.c. By \eqref{eqn 2.6}, the restriction to $[-\epsilon^{-1},\epsilon^{-1}]$ solves \eqref{eqn: int1}, whose solution is unique.  
\end{proof}
In order to investigate the stability of process around instantons, it is essential to linearize the system around instanton. The following Proposition can be similarly proved in Prop 2.5, \cite{BraDP}.  

\begin{prop}\label{prop : 2D linearized sys} 

For any given $\epsilon > 0$, $m_{1,0},m_{2,0}$ and $Z_{1,t},Z_{2,t}$ as in Proposition \ref{system for m-i,t for whole line}. Then, $m_{1,t}$ and $m_{2,t}$solve \eqref{eqn: integral_on_R_1} if and only if $u_t:= m_{1,t}-\overline{m}_{x_0},\;v_t:= m_{2,t} -\overline{m}_{x_0}$ are solution of the following system
\begin{equation}\label{eqn: uu}
\begin{aligned}
&u_t = g_{t,x_{0}}u_0-\int_0^t  g_{t-s,x_{0}}\Big( 3\overline{m}_{x_0}u_s^2 + u_s^3 - \lambda(v_s-u_s) \Big)ds + \sqrt{\epsilon}W_{1,t,x_{0}}\\
&v_t = g_{t,x_{0}}v_0-\int_0^t  g_{t-s,x_{0}} \Big( 3\overline{m}_{x_0}v_s^2 + v_s^3 - \lambda(u_s - v_s) \Big)ds + \sqrt{\epsilon}W_{2,t,x_{0}}
\end{aligned}
\end{equation}
where $g_{t,x_0}$ is the operator that solves the initial value problem for 
\begin{equation}
\partial_t u = L_{x_0}u:= \dfrac{1}{2}\partial_{xx}u - V^{''}(\overline{m}_{x_0})u
\end{equation}
and Gaussian noises $W_{i,t,x_0}$ have the form

\begin{equation}\label{noise W_t}
\begin{aligned}
W_{i,t,x_0}(x)= \int_0^t \int_{-\epsilon^{-1}}^{\epsilon^{-1}} \sum_{k\in \mathbb{Z}}\Big( &g_{t-s,x_0}(x,y+4k\epsilon^{-1})\\
&+g_{t-s,x_0}(x,4k\epsilon^{-1}+2\epsilon^{-1}-y) \Big)\dot{\alpha_i}(y,s)dyds
\end{aligned}
\end{equation}
\end{prop}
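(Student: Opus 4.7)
The strategy is a direct substitution followed by a Duhamel reorganization. I would begin by setting $m_{1,t}=\overline{m}_{x_0}+u_t$ and $m_{2,t}=\overline{m}_{x_0}+v_t$ in \eqref{eqn: integral_on_R_1}. Since $\overline{m}_{x_0}(x)=\tanh(x-x_0)$ is a stationary solution of the deterministic Allen--Cahn equation on $\mathbb{R}$, it satisfies the mild identity
\begin{equation*}
\overline{m}_{x_0}=H_t\overline{m}_{x_0}-\int_0^t H_{t-s}\bigl(\overline{m}_{x_0}^3-\overline{m}_{x_0}\bigr)\,ds,
\end{equation*}
which, subtracted from each equation of \eqref{eqn: integral_on_R_1}, cancels the contribution of the background instanton and leaves an equation purely in $u_t$ and $v_t$.

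The next step is algebraic. Expanding $m_{1,s}^3-\overline{m}_{x_0}^3=3\overline{m}_{x_0}^2u_s+3\overline{m}_{x_0}u_s^2+u_s^3$ and collecting terms using $V''(\overline{m}_{x_0})=3\overline{m}_{x_0}^2-1$, the integrand becomes $V''(\overline{m}_{x_0})u_s+3\overline{m}_{x_0}u_s^2+u_s^3+\lambda(u_s-v_s)$, and analogously for $v_t$. The linear-in-$u$ piece $V''(\overline{m}_{x_0})u_s$ is what must be absorbed into the semigroup generator $L_{x_0}=\frac{1}{2}\partial_{xx}-V''(\overline{m}_{x_0})$, while the genuinely nonlinear terms $3\overline{m}_{x_0}u_s^2+u_s^3$ and the linear coupling $\lambda(u_s-v_s)$ remain in the forcing.

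To convert the $H$-convolution carrying the linear perturbation $V''(\overline{m}_{x_0})$ into a clean $g_{t,x_0}$-convolution, I would invoke the variation-of-parameters identity
\begin{equation*}
g_{t,x_0}\phi=H_t\phi-\int_0^t H_{t-s}\,V''(\overline{m}_{x_0})\,g_{s,x_0}\phi\,ds,
\end{equation*}
which is just the mild form of $\partial_t(g_{t,x_0}\phi)=L_{x_0}(g_{t,x_0}\phi)$. Iterating this identity (equivalently, solving the induced Volterra integral equation) recasts the system as \eqref{eqn: uu} with $g_{t,x_0}$ replacing $H_t$. Performing the same Duhamel manipulation on the stochastic convolution $\sqrt{\epsilon}Z_{i,t}$ gives $\sqrt{\epsilon}W_{i,t,x_0}$; the reflection series $\sum_{k\in\mathbb{Z}}$ with shifted and reflected arguments in \eqref{noise W_t} arises because, by Proposition \ref{system for m-i,t for whole line}, $Z_{i,t}$ is the periodic-reflection extension from $\Omega_\epsilon$ to $\mathbb{R}$ of the It\^o integral driven by $\dot\alpha_i$ on $\Omega_\epsilon$, and unfolding this extension through the kernel $g_{t-s,x_0}$ produces exactly the displayed sum.

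The ``only if'' direction is what the above accomplishes; the ``if'' direction follows by reversing the steps (add back $\overline{m}_{x_0}$ and invert the Duhamel identity between $H_t$ and $g_{t,x_0}$). The algebra is routine, so the only genuine technical point is the rigorous justification of the stochastic Duhamel step: one has to interchange deterministic and stochastic integrals via a stochastic Fubini theorem and check convergence of the reflection series defining $W_{i,t,x_0}$. These verifications are standard given the continuity of $Z_{i,t}$ assumed in Proposition \ref{system for m-i,t for whole line} and the Gaussian-type kernel estimates on $g_{t,x_0}$, and this is exactly why the argument parallels that of Proposition 2.5 in \cite{BraDP}.
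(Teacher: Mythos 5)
Your proposal is correct and follows essentially the same route as the paper, which simply defers to the argument of Prop.\ 2.5 in \cite{BraDP}: substitute $m_{i,t}=\overline{m}_{x_0}+u_t$ (resp.\ $+v_t$), use the mild stationarity of $\overline{m}_{x_0}$, expand the cubic to isolate $V''(\overline{m}_{x_0})$, and pass from the $H_t$-Duhamel form to the $g_{t,x_0}$-form via variation of parameters and stochastic Fubini, with the reflection series in \eqref{noise W_t} coming from the periodic-reflection extension of the noise. No gaps worth noting.
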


The "spectral gap" of the operator $g_{t,x_0} = e^{L_{x_0}t}$ was mentioned in \cite{BraDP}. $L_{x_0}$ is a self-adjoint operator in $L^2(\mathbb{R},dx)$, as a result, its spectrums are real. Moreover, $L_{x_0}$ has an eigenvalue 0 associated to eigenvector $\overline{m}_{x_0}'$ and all of other spectrums lie on negative axis strictly away from 0. As a result, the solution of the equation $\partial_t u = L_{x_0}u$ mainly evolves in the direction of $\overline{m}_{x_0}'$ in $L^2(\mathbb{R},dx)$. Moreover, this result can be extended to $L^{\infty}(\mathbb{R},dx)$ in \cite{BraDP}. Formally, we refer again to Theorem 2.4, \cite{BraDP}:
\begin{prop}\label{1 asymptotic of operator g}.
There are $\alpha > 0$ and $c^*$ such that for any $\phi \in C^0(\mathbb{R})$ and $x_0 \in \mathbb{R}$
\begin{equation}
\norm{g_{t,x_0}(\phi - \langle \phi,\tilde{m}_{x_0}'\rangle\tilde{m}_{x_0}')}_{\infty} \leq c^*e^{-\alpha t}\norm{\phi - \langle \phi,\tilde{m}_{x_0}'\rangle\tilde{m}_{x_0}'}_{\infty}
\end{equation}
Here $\langle.,.\rangle$ is  inner product in $L^2(\mathbb{R})$ and normalized eigenfunction $\tilde{m}_{x_0}' := \dfrac{\sqrt{3}}{2}\overline{m}_{x_0}'$.
\end{prop}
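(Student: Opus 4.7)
The plan is to reduce to a spectral analysis of $L_{x_0}$ and then to upgrade the $L^2$ bound that the spectral theorem provides to an $L^\infty$ bound via heat-kernel estimates. By translation invariance of the linearization, it suffices to treat $x_0=0$: the conjugation $\phi(\cdot)\mapsto \phi(\cdot+x_0)$ intertwines $g_{t,x_0}$ with $g_{t,0}$ and preserves both the sup norm and the pairing with $\tilde m_{x_0}'$. For $x_0=0$, using $V''(\overline m(x))=3\tanh^2(x)-1=2-3\,\mathrm{sech}^2(x)$, one has
\begin{equation*}
L_0=\tfrac12\partial_{xx}+3\,\mathrm{sech}^2(x)-2,
\end{equation*}
which, up to an additive shift by $-2$, is a Pöschl--Teller Schrödinger operator with parameter $\lambda(\lambda+1)=6$, i.e.\ $\lambda=2$. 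Its spectrum is completely explicit: differentiating the stationary Allen--Cahn equation shows $L_0\overline m'=0$, and since $\overline m'=\mathrm{sech}^2>0$ this is the ground state of $-L_0$; a second bound state lies at energy $3/2$ of $-L_0$ and the continuous spectrum begins at $2$. Hence the non-zero part of the spectrum of $L_0$ sits inside $(-\infty,-\alpha]$ with $\alpha=3/2$.

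Writing $P\phi:=\langle\phi,\tilde m'\rangle\tilde m'$ and $Q:=I-P$, the spectral theorem immediately yields the $L^2$ estimate
\begin{equation*}
\|g_{t,0}Q\phi\|_2\le e^{-\alpha t}\|Q\phi\|_2,\qquad \phi\in L^2(\mathbb R).
\end{equation*}
This is the easy half of the argument, because $L_0$ is self-adjoint and $Q$ is the spectral projector onto the complement of $\operatorname{span}(\tilde m')$.

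The main obstacle is promoting this $L^2$ estimate to the $L^\infty$ bound in the statement, since $L^\infty(\mathbb R)\not\subset L^2(\mathbb R)$ and $P\phi$ is only well-defined on $L^\infty$ because $\tilde m'$ has exponential decay. I would exploit the smoothing properties of $g_{t,0}$ through the Feynman--Kac representation
\begin{equation*}
(g_{t,0}\phi)(x)=\mathbb E_x\!\left[\exp\!\left(-\int_0^t V''(\overline m(B_s))\,ds\right)\phi(B_t)\right],
\end{equation*}
which provides a Gaussian upper bound $g_{t,0}(x,y)\le C t^{-1/2}\exp(-c(x-y)^2/t)$, and, because $V''(\overline m(z))\to 2$ as $|z|\to\infty$, an additional exponential decay of the kernel in the tails. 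These two features imply that $g_{1,0}$ is bounded from $L^\infty$ into a weighted $L^2$ space and from that $L^2$ space back into $L^\infty$. For $t\ge 2$ I would then split
\begin{equation*}
g_{t,0}Q \;=\; g_{1,0}\circ g_{t-2,0}\circ g_{1,0}Q,
\end{equation*}
apply the kernel estimates to the two outer factors and the spectral-gap contraction to the middle factor; the range $t\in[0,2]$ is absorbed into the constant $c^*$ through the trivial bound $\|g_{t,0}\|_{\infty\to\infty}\le e^{Ct}$. A slight reduction of $\alpha$ absorbs the factors $e^{2\alpha}$ gained from the splitting, and translating back from $0$ to a general $x_0$ by the shift used at the start completes the argument.
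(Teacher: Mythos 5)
You are supplying a proof for a statement the paper itself does not prove: Proposition \ref{1 asymptotic of operator g} is quoted verbatim from Theorem 2.4 of \cite{BraDP}, so there is no internal argument to compare with, only the citation. Your preparatory steps are correct: the reduction to $x_0=0$ by translation, the identity $V''(\overline m)=2-3\,\mathrm{sech}^2$, the P\"oschl--Teller spectrum of $L_0$ (simple eigenvalue $0$ with positive eigenfunction $\overline m'=\mathrm{sech}^2$, a second eigenvalue at $-3/2$, essential spectrum $(-\infty,-2]$), and the resulting unweighted $L^2$ contraction $\lVert g_{t,0}Q\phi\rVert_2\le e^{-3t/2}\lVert Q\phi\rVert_2$ on the orthogonal complement of $\tilde m'$, with $Q=I-P$, $P\phi=\langle\phi,\tilde m'\rangle\tilde m'$.

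The genuine gap is the $L^2\to L^\infty$ upgrade, which is the entire content of the proposition beyond textbook spectral theory. Your factorization $g_{t,0}Q=g_{1,0}\circ g_{t-2,0}\circ g_{1,0}Q$ needs the middle factor to act on an element of the \emph{unweighted} $L^2$, where the self-adjoint spectral gap is available; but $g_{1,0}Q\phi$ is in general not square integrable. Take $\phi\equiv 1$: then $Q\phi=1-\tfrac32\mathrm{sech}^2$, and by Feynman--Kac $g_{1,0}Q\phi(x)\to e^{-2}>0$ as $|x|\to\infty$, so $g_{1,0}Q\phi\notin L^2(\mathbb R)$. The ``extra exponential decay in the tails'' coming from $V''(\overline m)\to 2$ is decay in \emph{time} of the far-field evolution (a factor $e^{-2t}$), not decay in \emph{space} of $g_{1,0}$ applied to bounded functions, so $g_{1,0}$ does not map $L^\infty$ into $L^2$; and if you retreat to a weighted $L^2$ space in which constants are square integrable, $L_0$ is no longer self-adjoint there and the $e^{-\alpha t}$ contraction you invoke for the middle factor is exactly what remains to be proved. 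To close the argument one needs a further idea, e.g.\ split $\psi=Q\phi$ into a piece supported in a large interval $[-R,R]$ (for which your sandwich argument does work, using Gaussian kernel bounds for the two outer factors) and a far-field piece handled by Feynman--Kac with $V''\ge 2-\delta$ outside $[-R,R]$ together with occupation-time estimates; but the truncated piece is then no longer orthogonal to $\tilde m'$, and the non-decaying components along $\tilde m'$ generated by the two pieces have to be shown to cancel. That bookkeeping is precisely what Theorem 2.4 of \cite{BraDP} provides and what your sketch leaves unproven.
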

Above useful property helps us to derive the long time behavior of linearized system \eqref{eqn: uu} without noises by diagonalizing the system $(u_{1,t},u_{2,t})$ to $\bigg(\dfrac{u_{1,t}+u_{2,t}}{2},\dfrac{u_{1,t}-u_{2,t}}{2} \bigg)$. 
\begin{corro}\label{corro 2D}
Let $u_1,u_2$ be solution of the following system on $\mathbb{R} \times [0,+\infty)$
\begin{equation}
\begin{aligned}
&\partial_t u_1 = 1/2\; \partial_{xx}u_1 - V''(\overline{m}_{x_0})u_1 + \lambda (u_2 - u_1)\\
&\partial_t u_2 = 1/2 \; \partial_{xx}u_2 - V''(\overline{m}_{x_0})u_2 + \lambda (u_1 - u_2)\\
&u_1(x,0)= u_{1,0}(x),\quad u_2(x,0) = u_{2,0}(x)
\end{aligned}
\end{equation}
Then both solutions mainly evolve to the same direction $\tilde{m}_{x_0}'$ exponentially in the sup norm, i.e, $\forall i=1,2 $ 
\begin{equation}\label{same convergence}
\quad \lim_{t \rightarrow \infty}\norm{u_{i,t} - \bigg \langle \dfrac{u_{1,0}+u_{2,0}}{2},\tilde{m}_{x_0}' \bigg \rangle\tilde{m}_{x_0}'}_\infty = 0
\end{equation}
\end{corro}
\begin{proof}
Denoting by $U_t^+ = u_{1,t} + u_{2,t},\quad U_t^- = u_{1,t} - u_{2,t}$, it is easy to derive that they evolve as the following system
\begin{equation}
\begin{aligned}
&\partial_t U_t^+ = \dfrac{1}{2} \partial_{xx}U_t^+ - V''(\overline{m}_{x_0})U_t^+ \\
&\partial_t U_t^- = \dfrac{1}{2} \partial_{xx}U_t^- - V''(\overline{m}_{x_0})U_t^- - 2\lambda U_t^- 
\end{aligned}
\end{equation}
We obtain, by Theorem \ref{1 asymptotic of operator g}, there are $c,\alpha >0$ such that 
\begin{equation}
\norm{U_t^+ - \langle U_0^+,\tilde{m}_{x_0}'\rangle \tilde{m}_{x_0}'}_\infty \leq c e^{-\alpha t}\norm{U_0^+ - \langle U_0^+ ,\tilde{m}_{x_0}' \rangle  }_\infty 
\end{equation}
In a similar way, by changing variable, we could prove that
\begin{equation}
\norm{U_t^- - e^{-2\lambda t}\langle U_0^- , \tilde{m}_{x_0}'\rangle \tilde{m}_{x_0}'}_\infty \leq c e^{-\alpha t - 2\lambda t} \norm{U_0^- - \langle U_0^- , \tilde{m}_{x_0}'\rangle \tilde{m}_{x_0}'}_\infty
\end{equation}
$u_{1,t}$ and $u_{2,t}$ could be estimated through $U_t^+$ and $U_t^-$ as follows
\begin{equation}
\begin{aligned}
&\norm{u_{1,t} - \langle \dfrac{U_0^+}{2},\tilde{m}_{x_0}' \rangle \tilde{m}_{x_0}' - e^{-2\lambda t} \bigg \langle \dfrac{U_0^-}{2},\tilde{m}_{x_0}' \bigg \rangle \tilde{m}_{x_0}'}_\infty \\
&\leq  \dfrac{1}{2}\norm{U_t^+ - \langle U_0^+ , \tilde{m}_{x_0}'\rangle \tilde{m}_{x_0}' }_\infty + \dfrac{1}{2} \norm{U_t^- - e^{-2\lambda t}\langle U_0^-,\tilde{m}_{x_0}' \rangle \tilde{m}_{x_0}'}_\infty \\
&\leq \dfrac{1}{2}c e^{-\alpha t}\norm{U_0^+ - \langle U_0^+ ,\tilde{m}_{x_0}' \rangle \tilde{m}_{x_0}'  }_\infty + \dfrac{1}{2}c e^{-\alpha t - 2\lambda t} \norm{U_0^- - \langle U_0^- , \tilde{m}_{x_0}'\rangle \tilde{m}_{x_0}'}_\infty 
\end{aligned}
\end{equation}
Thus,
\begin{equation}
\begin{aligned}
&\norm{u_{1,t} -  \bigg \langle \dfrac{U_0^+}{2},\tilde{m}_{x_0}' \bigg \rangle \tilde{m}_{x_0}'}_\infty \leq e^{-2\lambda t} \norm{\bigg \langle \dfrac{U_0^-}{2},\tilde{m}_{x_0}' \bigg \rangle \tilde{m}_{x_0}'}_\infty \\
 &+ \dfrac{1}{2}c e^{-\alpha t}\norm{U_0^+ - \langle U_0^+ ,\tilde{m}_{x_0}' \rangle \tilde{m}_{x_0}'  }_\infty + \dfrac{1}{2}c e^{-\alpha t - 2\lambda t} \norm{U_0^- - \langle U_0^- , \tilde{m}_{x_0}'\rangle \tilde{m}_{x_0}'}_\infty 
\end{aligned}
\end{equation}
and this implies \eqref{same convergence}.
\end{proof}
The kernel $g_{t,x_0}(x,y)$ of the operator $g_{t,x_0}$ was studied in  Lemma A.4,\cite{Brassesco1998} that it is positive and uniformly integrable respect to $y$:
\begin{lemma}
Let $x_0 \in \mathbb{R}$ and let $g_{t,x_0}(x,y)$ be the fundamental solution of the equation $\partial_t u =\dfrac{1}{2}\partial_{xx}u - V''(\overline{m}_{x_0})u$. Then there exists a positive constant $\kappa$ such that:
\begin{equation}\label{asymptotic of operator g}
\begin{aligned}
&g_{t,x_0}(x,y) \geq 0 \quad \text{for any} \quad x,y \in \mathbb{R} \\
& \sup_{x \in \mathbb{R}} \sup_{t>0} \int_\mathbb{R} g_{t,x_0}(x,y)dy \leq \kappa
\end{aligned}
\end{equation}
\end{lemma}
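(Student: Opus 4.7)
The lemma has two assertions: pointwise non-negativity of the kernel and a uniform integrability bound. The first is a parabolic maximum-principle statement, while the second is more delicate and is most cleanly handled via a Doob $h$-transform based on the positive zero-mode $\overline{m}'_{x_0}$ of $L_{x_0}$.

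For the positivity, note that the potential $c(x):=V''(\overline{m}_{x_0}(x))=3\tanh^2(x-x_0)-1$ is bounded, taking values in $[-1,2]$. By the Feynman--Kac formula,
\[
g_{t,x_0}(x,y)\,dy \;=\; E_x\!\Big[\exp\!\Big(\!-\!\int_0^t c(B_s)\,ds\Big)\,\mathbf{1}_{B_t\in dy}\Big],
\]
with $B$ a Brownian motion of generator $\tfrac12\partial_{xx}$, which makes non-negativity manifest. Equivalently, one applies the classical parabolic maximum principle to $e^{\|c\|_\infty t}u_t$, which satisfies an equation with non-negative zero-order coefficient and therefore preserves the sign of the initial datum.

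For the integral bound, the key ingredient is that $h(x):=\overline{m}'_{x_0}(x)=\mathrm{sech}^2(x-x_0)$ is strictly positive, bounded by one, and satisfies $L_{x_0}h=0$ (obtained by differentiating the stationary Allen--Cahn equation). The substitution $u=hv$ conjugates $L_{x_0}$ to the drift-diffusion generator $\tilde L v = \tfrac12 v'' -2\tanh(x-x_0)\,v'$, which governs a one-dimensional positive-recurrent diffusion $\tilde X_t$, reversible with respect to the probability measure $\pi(dy)\propto h(y)^2\,dy \propto \mathrm{sech}^4(y-x_0)\,dy$. Unwrapping the conjugation gives the identity
\[
\int_{\mathbb R} g_{t,x_0}(x,y)\,dy \;=\; h(x)\,E_x\!\left[\frac{1}{h(\tilde X_t)}\right],
\]
so the task reduces to bounding the right-hand side uniformly in $(x,t)$. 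For large $t$, ergodicity of $\tilde X_t$ forces $E_x[1/h(\tilde X_t)]\to E_\pi[1/h]$, and a direct computation yields $E_\pi[1/h]=\bigl(\int\mathrm{sech}^2\bigr)/\bigl(\int\mathrm{sech}^4\bigr)=3/2$, so the product is bounded by a constant close to $3/2$. For short $t$, the bounded drift of $\tilde X_t$ yields Gaussian-type upper bounds on its transition density and confines $\tilde X_t$ near the starting point $x$, making $E_x[1/h(\tilde X_t)]$ comparable to $1/h(x)$, so that the product $h(x)E_x[1/h(\tilde X_t)]$ is again controlled.

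The main technical obstacle is the uniformity of the bound in $x$ for large $|x-x_0|$: there $h(x)$ is exponentially small while $1/h$ is exponentially large over the typical support of $\tilde X_t$, and these two factors must be made to cancel quantitatively. This is conveniently achieved by a Lyapunov-function estimate (for instance using $\cosh^2(\,\cdot\,-x_0)$ itself as the Lyapunov function) or by coupling $\tilde X_t$ to its stationary version. The detailed execution of these steps is precisely the content of Lemma A.4 of \cite{Brassesco1998}, whose argument I would follow for the remaining technicalities.
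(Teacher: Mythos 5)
The paper does not actually prove this lemma --- it quotes it verbatim from Lemma A.4 of \cite{Brassesco1998} --- and your sketch is exactly the ground-state (Doob $h$-transform) argument used there: Feynman--Kac positivity together with the identity $\int_{\mathbb R} g_{t,x_0}(x,y)\,dy = h(x)\,E_x\bigl[1/h(\tilde X_t)\bigr]$ for $h=\overline{m}'_{x_0}$, with the conjugated drift $-2\tanh(\cdot-x_0)$ computed correctly. Since you correctly isolate the only delicate point (uniformity of this bound in $x$ and $t$) and defer it to the same reference the paper itself cites, your proposal is correct and in line with the paper's treatment.
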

The following proposition stated in Prop 5.4,\cite{BraDP} provides a representation of the noises $W_{i,t,x_0}$ in the direction $\overline{m}_{x_0}'$ and estimate their boundedness unitl time $t \leq \epsilon^{-2}$.
\begin{prop}\label{prop noise}
The processes $W_{i,t,x_0}(i=1,2)$ is represented as below
\begin{equation}
W_{i,t,x_0} =: B_{i,t,x_0}\tilde{m}_{x_0}' + R_{i,t} \quad (D:=3/4,\; \tilde{m}_{x_0}':=\sqrt{D}\overline{m}_{x_0}')
\end{equation}
with the following properties.
$B_{i,t,x_0}=\langle W_{i,t,x_0},\tilde{m}_{x_0}'\rangle$ is a process adapted to $W_{i,t,x_0}$, its laws is the law of a Brownian motion with diffusion coefficient
\begin{equation}
D_{\epsilon} = \int_{-\epsilon^{-1}}^{\epsilon^{-1}}\Big[ \sum_{k \in \mathbb{Z}}(\tilde{m}_{x_0}'(y+4k\epsilon^{-1})+\tilde{m}_{x_0}'(4k\epsilon^{-1}+2\epsilon^{-1}-y) \Big]^2dy
\end{equation}
and there is a constant c such that
\begin{equation}
\lvert D_{\epsilon}-1\rvert \leq c e^{-\epsilon^{-1}}
\end{equation}
For any $a>0$ let
\begin{equation}
G_{i,\epsilon}(a) := \lbrace \norm{W_{i,t,x_0}}_{\infty} \leq \epsilon^{-a}(t \vee 1)^{1/2}, \norm{R_{i,t}}_{\infty} \leq \epsilon^{-a}, \text{for all}\; t \leq \epsilon^{-2} \rbrace
\end{equation}
Then for any $n \geq 1$ there is $c_n$ so that
\begin{equation}\label{2 bound noise}
P^{\epsilon}(G_{i,\epsilon}(a)) \geq 1 - c_n \epsilon^n
\end{equation}
\end{prop}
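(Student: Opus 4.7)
The plan is to decompose $W_{i,t,x_0}$ along the kernel direction $\tilde{m}'_{x_0}$ of $L_{x_0}$ and its $L^2$-orthogonal complement, and to treat the two pieces by different techniques: for the projection $B_{i,t,x_0}$ the identity $g_{s,x_0}\tilde{m}'_{x_0}=\tilde{m}'_{x_0}$ collapses the stochastic convolution to an ordinary Wiener integral, while for the remainder $R_{i,t}$ the exponential stability of $g_{s,x_0}$ on $\{\tilde{m}'_{x_0}\}^\perp$ furnished by Proposition \ref{1 asymptotic of operator g} keeps the variance uniformly bounded in time.

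For the first piece I set $B_{i,t,x_0}:=\langle W_{i,t,x_0},\tilde{m}'_{x_0}\rangle$, pair \eqref{noise W_t} against $\tilde{m}'_{x_0}$, use Fubini and the selfadjointness of $g_{t-s,x_0}$, and observe that $g_{t-s,x_0}\tilde{m}'_{x_0}=\tilde{m}'_{x_0}$ because $L_{x_0}\tilde{m}'_{x_0}=0$. The time dependence in the Green operator drops out and $B_{i,t,x_0}$ becomes a Wiener integral against $\dot\alpha_i$ with a fixed spatial weight $\Phi_\epsilon(y)=\sum_k[\tilde{m}'_{x_0}(y+4k\epsilon^{-1})+\tilde{m}'_{x_0}(4k\epsilon^{-1}+2\epsilon^{-1}-y)]$. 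The result is a continuous centred Gaussian martingale with quadratic variation $D_\epsilon t$, where $D_\epsilon=\int_{-\epsilon^{-1}}^{\epsilon^{-1}}\Phi_\epsilon^2$; by Lévy's characterisation it is a Brownian motion with diffusion coefficient $D_\epsilon$. The estimate $|D_\epsilon-1|\leq ce^{-\epsilon^{-1}}$ follows from the normalisation $\int_{\mathbb{R}}(\tilde{m}')^2=1$ (since $\int(\overline{m}')^2=\int\operatorname{sech}^4=4/3$ and $D=3/4$) together with the exponential decay $\overline{m}'(x)\sim 4e^{-2|x|}$, which makes every $k\neq 0$ shift in the periodic sum and every cross-term in $\Phi_\epsilon^2$ exponentially small in $\epsilon^{-1}$.

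For the second piece I set $R_{i,t}:=W_{i,t,x_0}-B_{i,t,x_0}\tilde{m}'_{x_0}$. Because the rank-one projection onto $\tilde{m}'_{x_0}$ commutes with $g_{s,x_0}$, $R_{i,t}$ is itself a stochastic convolution of $\dot\alpha_i$ with the kernel of $g_{t-s,x_0}$ restricted to $\{\tilde{m}'_{x_0}\}^\perp$. Proposition \ref{1 asymptotic of operator g} gives $\norm{g_{s,x_0}\phi}_\infty\leq c^*e^{-\alpha s}\norm{\phi}_\infty$ on this subspace, so the pointwise variance $\mathbb{E}(R_{i,t}(x)^2)$ is controlled by $\int_0^t e^{-2\alpha s}ds$ and is uniformly bounded in $(t,x)$; similarly, the variance of the full $W_{i,t,x_0}(x)$ is $O(t)$ from the standard heat-kernel bounds. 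To upgrade these second-moment estimates to sup-norm bounds with probability $\geq 1-c_n\epsilon^n$, I rely on Gaussian hypercontractivity (or Borel--TIS) to obtain arbitrary $L^{2p}$-moment controls, discretise $(t,x)\in[0,\epsilon^{-2}]\times\mathbb{R}$ on a polynomially fine grid, fill in the gaps by a Kolmogorov-type continuity argument on the Gaussian field, and use the exponential decay of $\tilde{m}'_{x_0}$ to truncate the spatial variable to an $O(\log\epsilon^{-1})$ window. A Markov inequality with $p$ chosen large then yields the $\epsilon^n$ bound \eqref{2 bound noise} for any $n$.

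The main obstacle is precisely the sup-norm control of $R_{i,t}$ over the long horizon $t\leq\epsilon^{-2}$: a naïve estimate based on the full $W_{i,t,x_0}$ grows polynomially in $t$, and only the exponential decay of $g_{s,x_0}$ on $\{\tilde{m}'_{x_0}\}^\perp$ confines the relevant noise to an $O(1)$ look-back window, which is what ultimately keeps $R_{i,t}$ of order $\epsilon^{-a}$ rather than growing with $t$; balancing this gain against the cost of the space-time grid is the delicate quantitative step.
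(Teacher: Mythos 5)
Your outline is essentially the argument behind this statement: note that the paper itself gives no proof here -- Proposition \ref{prop noise} is quoted verbatim from Prop.~5.4 of \cite{BraDP} -- and the proof there proceeds exactly as you propose: pair \eqref{noise W_t} with $\tilde{m}_{x_0}'$, use self-adjointness of $g_{t-s,x_0}$ and $g_{t-s,x_0}\tilde{m}_{x_0}'=\tilde{m}_{x_0}'$ to collapse $B_{i,t,x_0}$ to a Wiener integral with the fixed weight $\Phi_\epsilon$, identify it as a Brownian motion with diffusion coefficient $D_\epsilon=\int\Phi_\epsilon^2$, get $|D_\epsilon-1|$ exponentially small from $\int(\tilde{m}')^2=1$ and the decay of $\overline{m}'$, and control $R_{i,t}$ by the spectral gap plus Gaussian tail and continuity estimates with a union bound over a polynomially fine grid.

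Two technical points in your sketch would need tightening to make this a complete proof. First, the pointwise variance of $R_{i,t}(x)$ is not obtained directly from the $L^\infty\to L^\infty$ contraction of Proposition \ref{1 asymptotic of operator g}: the It\^o isometry requires $L^2_y$ bounds on the projected kernel, i.e.\ estimates of $\int\big(g_{t-s,x_0}(x,y)-\tfrac{3}{4}\overline{m}_{x_0}'(x)\overline{m}_{x_0}'(y)\big)^2dy$, which carry a short-time singularity of order $(t-s)^{-1/2}$ (integrable) on top of the $e^{-2\alpha(t-s)}$ decay; this is obtained from kernel/spectral estimates (e.g.\ via the semigroup identity $\int g_u(x,y)^2dy=g_{2u}(x,x)$), not from the sup-norm statement alone. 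Second, the reduction of the spatial supremum cannot be justified by the decay of $\tilde{m}_{x_0}'$: the sup norms in $G_{i,\epsilon}(a)$ are over the whole line, and $W_{i,t,x_0}$ is by no means small far from $x_0$ (far away the kernel behaves like a damped heat kernel acting on the reflected, $4\epsilon^{-1}$-periodized noise). The correct reduction uses this quasi-periodic N.b.c.\ structure to restrict to a window of length $O(\epsilon^{-1})$, which still gives only polynomially many grid points and so is compatible with your high-moment/union-bound scheme; as stated, an $O(\log\epsilon^{-1})$ truncation would miss where $\norm{W_{i,t,x_0}}_\infty$ can be attained. With these repairs your argument coincides with the cited one.
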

\section{Sketch of the proof}
Before giving heuristic proof of Theorem \ref{main theorem}, it is necessary to recall stability properties of instantons $M = \lbrace \overline{m}_\xi , \xi \in \mathbb{R} \rbrace$ in deterministic case. Let us write Allen-Cahn (AC) equation in unbounded domain 
\begin{equation}\label{AC 1}
\dfrac{\partial m(x,t)}{\partial t} = \Delta m(x,t) - V'(m(x,t)) \quad (x,t) \in \mathbb{R} \times \mathbb{R}
\end{equation}
The analysis shows that, under AC evolution, $M$ attracts exponentially fast, in sup norm, all functions which are in small neighborhood of $M$, i.e, if $m$ is close enough to $\overline{m}_\xi$ in sup norm, then there is $\xi'\in \mathbb{R}$ such that for all $t$, $\norm{m(t) - \overline{m}_{\xi'}}_\infty \approx e^{-at}$ for some $a>0$. This result is no longer true if small noises $\epsilon^{1/2} \dot{\alpha}(x,t)$ is added in \eqref{AC 1} as shown in \cite{BraDP}. Indeed, we have
\begin{equation}
\sup_{t\leq \epsilon^{-1}\tau} \text{dist} (m(t),M) \leq \epsilon^{1/2-a}
\end{equation}
with "large probability" for $\epsilon$ small. One important technique to study instantons's stability of \eqref{AC 1} when small noises $\epsilon^{1/2}Z_t$ is added is the notion of center. $\xi$ is called a center of $m$ if
\begin{equation}\label{1 center}
\langle m - \overline{m}_\xi, \overline{m}_{\xi}'\rangle = 0
\end{equation}
The role of "center" is interpreted as follow: if $ m(x)$ has a center $\xi$, then the linearized evolution of \eqref{AC 1} around $\overline{m}_\xi$ is
\begin{equation}
 \partial_t \phi = L_\xi \phi = \dfrac{1}{2} \Delta \phi - V''(\overline{m}_\xi)\phi
\end{equation} 
By Proposition \ref{1 asymptotic of operator g}, $\phi = m - \overline{m}_\xi$ mainly evolve along $\langle \phi, \overline{m}_\xi' \rangle \overline{m}_\xi' = \langle m-\overline{m}_\xi, \overline{m}_\xi'\rangle \overline{m}_\xi'$, this term, however, vanishes by \eqref{1 center}. Hence, the center of $m$ is also the center of the instanton to which the linearized AC evolution converges. Therefore, it is natural to decompose $m_t - \overline{m}_{\xi(t)}$ ($\xi(t)$ is center of $m_t$) into the basis along $\overline{m}_{\xi(t)}'$ since all other directions are exponentially damped. Since the noises $\epsilon^{1/2}Z_t$ dominates at the beginning of evolution of $m_t$, so non linear terms may be neglected for short time $t_\epsilon = \epsilon^{-1/10}$ 
\begin{equation}
dm_{t_\epsilon} \approx \dfrac{3}{4}\epsilon^{1/2} \overline{m}_{\xi(t_\epsilon)}' \langle \overline{m}_{\xi(t_\epsilon)}',dZ_{t_\epsilon} \rangle
\end{equation}
 Furthermore, from $m_{t_\epsilon} \approx \overline{m}_{\xi(t_\epsilon)}$ we get
 \begin{equation}
 dm_{t_\epsilon} \approx \overline{m}_{\xi(t_\epsilon)}'d\xi(t_\epsilon)
 \end{equation}
Thus,
\begin{equation}
d\xi(t_\epsilon) \approx \dfrac{3}{4} \epsilon^{1/2} \langle \overline{m}_{\xi(t_\epsilon)}',dZ_{t_\epsilon} \rangle
\end{equation}
Moreover, $\langle \overline{m}_{\xi(t_\epsilon)}',dZ_{t_\epsilon} \rangle$ is a Brownian motion $B_{t_\epsilon}$ as shown in Prop 5.4,\cite{BraDP}. Finally,
\begin{equation}\label{xi Brownian}
d\xi_{t_\epsilon} \approx \dfrac{3}{4}\epsilon^{1/2}B_{t_\epsilon}
\end{equation}
Since the magnititue of Brownian motion $B_t \approx \sqrt{t}$ so $\xi_{t_\epsilon} \approx \epsilon^{1/2-1/20}$ is infinitesimal, and \eqref{xi Brownian} is expected to hold much longer time $T_\epsilon^* = \epsilon^{-1}$ since $\xi_{T_\epsilon^*} \approx \epsilon^{1/2}\sqrt{T_\epsilon} \approx O(1)$.  By controlling the bounded of noises through considering spatial domain in $[-\epsilon^{-1},\epsilon^{-1}]$,  above process can be iterated up to time $T= \epsilon^{-1}t$, $\xi(\epsilon^{-1}t)$ is a Brownian motion with diffusion coefficient $3/4$ as shown in \cite{BraDP}.

The same idea can also be applied to our system \eqref{eqn: integral_on_R_1} with slightly different. The linearized system for $u_{1,t} = m_{1,t} - \overline{m}_{x_0} $ and $u_{2,t} =m_{2,t} - \overline{m}_{x_0}$ is
\begin{equation}
\begin{aligned}
\partial_t u_{1,t} &= \dfrac{1}{2}\partial_{xx} u_{1,t} - V''(\overline{m}_{x_0})u_{1,t} + \lambda (u_{2,t}-u_{1,t})\\
\partial_t u_{2,t} &= \dfrac{1}{2}\partial_{xx} u_{2,}t - V''(\overline{m}_{x_0})u_{2,t} + \lambda (u_{1,t}-u_{2,t})
\end{aligned}
\end{equation}
By Corollary \ref{corro 2D}, we get for every $x \in \mathbb{R}$
\begin{equation}
u_{i,t}(x) \rightarrow \dfrac{3}{4} \bigg\langle \dfrac{u_{1,0}+u_{2,0}}{2}, \overline{m}_{x_0}' \bigg\rangle \overline{m}_{x_0}'(x) = \dfrac{3}{4} \bigg\langle \dfrac{m_{1,0}+m_{2,0}}{2}- \overline{m}_{x_0}, \overline{m}_{x_0}'  \bigg \rangle \overline{m}_{x_0}'(x)
\end{equation}
In order to get $u_{i,t}$ vanish, it is then natural to define $x_0$ such that
\begin{equation}
\bigg \langle \dfrac{m_{1,0}+m_{2,0}}{2} - \overline{m}_{x_0}, \overline{m}_{x_0}' \bigg\rangle = 0
\end{equation}
and $x_0$ is call to be the center of $\dfrac{m_{1,0}+m_{2,0}}{2}$. In short, the center of $\dfrac{m_{1,0}+m_{2,0}}{2}$ is also the center of the instanton to which the linearized evolution converge. As a result, we deal with the centers $\xi(t)$ of avarage $m_{1,t}$ and $m_{2,t}$, i.e, $\dfrac{m_{1,t}+m_{2,t}}{2}$ rather than centers of individual components. Using similar argument as above, we have, for short time $t_\epsilon  = \epsilon^{-1/10}$
\begin{equation}
dm_{i,t_\epsilon} \approx \dfrac{3}{4}\epsilon^{1/2} \overline{m}_{\xi(t_\epsilon)}' \Bigg\langle \overline{m}_{\xi(t_\epsilon)}',d\bigg( \dfrac{Z_{1,t_\epsilon} + Z_{2,t_\epsilon)}}{2} \bigg) \Bigg\rangle
\end{equation}
 Furthermore, from $m_{i,t_\epsilon} \approx \overline{m}_{\xi(t_\epsilon)}$ we get
 \begin{equation}
 dm_{i,t_\epsilon} \approx \overline{m}_{\xi(t_\epsilon)}'d\xi(t_\epsilon)
 \end{equation}
Thus,
\begin{equation}
d\xi(t_\epsilon) \approx \dfrac{3}{4}\epsilon^{1/2} \overline{m}_{\xi(t_\epsilon)}' \Bigg\langle \overline{m}_{\xi(t_\epsilon)}',d\bigg( \dfrac{Z_{1,t_\epsilon} + Z_{2,t_\epsilon)}}{2} \bigg) \Bigg\rangle
\end{equation}
and by iterating this procedure, $\xi(\epsilon^{-1}t)$ behaves as a Brownian motion with diffusion coefficient $3/8$.
\section{Fluctuations of the instantons}
The notion "center" introduced in \cite{BraDP} plays an important role in the proof. We now recall it and then explain its meaning.
\begin{definition}
The function $m \in C^0(\mathbb{R})$ has a center $x_0$ if 
\begin{equation}
\langle m-\overline{m}_{x_0}, \overline{m}_{x_0}' \rangle_{L^2} = 0
\end{equation}
\end{definition}
The role of "center" is interpreted as follow:  if  $\dfrac{m_{1,0}+m_{2,0}}{2}$ has a center $x_0$, then the linearized evolution around $\overline{m}_{x_0}$ states that $m_{1,t}-\overline{m}_{x_0}$ and $m_{2,t}-\overline{m}_{x_0}$ mainly evolve in the direction $\bigg\langle \dfrac{m_{1,0}+m_{2,0}}{2},\tilde{m}_{x_0}' \bigg\rangle \tilde{m}_{x_0}'$. As a result, they decay exponentially fast since $x_0$ is a center of $\dfrac{m_{1,0}+m_{2,0}}{2}$. Hence the center of $\dfrac{m_{1,0}+m_{2,0}}{2}$ is also the center of the instanton to which the linearized evolution converge.

We recall basic properties of center of a function. When a function is close enough to some instanton, its center is well-defined as in Prop 3.2,\cite{BraDP}:
\begin{prop} \label{propositions for center}
There are $\delta > 0$ and, given any $\zeta'$ and $\zeta$ such that $0 < \zeta' < \zeta <1$, there are $c $ and $\epsilon_0$ so that for any $0<\epsilon \leq \epsilon_0$ and any $|x_0| \leq (1-\zeta)\epsilon^{-1}$ the following holds. Let $m \in C^0(\mathbb{R}), \; \norm{m}_{\infty} \leq 2$, and\
\begin{equation}
\norm{m-\overline{m}_{x_0}}_{\epsilon} := \sup_{x \in [-\epsilon^{-1},\epsilon^{-1}]} \lvert m(x)-\overline{m}_{x_0}(x) \rvert \leq \delta
\end{equation}
Then

(1) $m$ has a center $\xi$ in $\Omega_{\epsilon}$,
\begin{equation}\label{first property of center}
|x_0 - \xi| \leq c(\norm{m - \overline{m}_{x_0}}_{\epsilon} + e^{-\epsilon^{-1}\zeta'})
\end{equation}
and $\xi$ is unique in $\lbrace |x| \leq (1-\zeta')\epsilon^{-1} \rbrace$

(2) 
\begin{equation} \label{second property  of center}
\bigg\lvert \xi - \bigg(x_0 - 3/4\langle \overline{m}_{x_0}',m-\overline{m}_{x_0} \rangle \bigg)  \bigg\rvert \leq c(\norm{m-\overline{m}_{x_0}}_\epsilon^2 + e^{-\zeta' \epsilon^{-1}})
\end{equation}

(3) Let $m^* \in C^0(\mathbb{R}), \; \norm{m^*}_{\infty} \leq 2$ and
\begin{equation}
\norm{m^*-m}_{\epsilon} \leq \delta
\end{equation} 
Then $m^*$ has a unique center $\xi^*$ in $\lbrace |x| \leq (1-\zeta')\epsilon^{-1} \rbrace$ and
\begin{equation}\label{3.7}
|\xi - \xi^*| \leq c \int \overline{m}_{x_0}'(x)|m^*(x) - m(x)|dx
\end{equation}

(4) If $m$ satisfies N.b.c in $\Omega_{\epsilon}$, then $m$ has unique center in $\Omega_{\epsilon}$.
\end{prop}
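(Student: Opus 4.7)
My plan is to reduce everything to a one-variable implicit function analysis for the functional
\[
F(\xi) := \langle m - \overline{m}_\xi, \overline{m}_\xi'\rangle_{L^2(\mathbb{R})},
\]
whose zeros are exactly the centers of $m$. The two basic ingredients are first, that $F(x_0)$ is small because $m$ is close to $\overline{m}_{x_0}$ on $\Omega_\epsilon$ while $\overline{m}_{x_0}'(x)=\mathrm{sech}^2(x-x_0)$ decays exponentially outside, and second, that $F'$ is uniformly bounded away from zero on a neighborhood of $x_0$. For the latter, differentiating under the integral using $\partial_\xi \overline{m}_\xi = -\overline{m}_\xi'$ gives
\[
F'(\xi) = \|\overline{m}_\xi'\|_{L^2}^2 - \langle m - \overline{m}_\xi, \overline{m}_\xi''\rangle,
\]
with $\|\overline{m}'\|_{L^2}^2 = 4/3$, so if $\delta$ is small and $|\xi - x_0|$ is small, then $F'(\xi) \geq 1$ (say). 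For $F(x_0)$, splitting the integral into $\Omega_\epsilon$ and its complement controls it by $c(\|m-\overline{m}_{x_0}\|_\epsilon + e^{-\zeta'\epsilon^{-1}})$, since $|x_0|\leq (1-\zeta)\epsilon^{-1}$ and $\int_{|x-x_0|>\zeta'\epsilon^{-1}} \overline{m}_{x_0}'\,dx = O(e^{-\zeta'\epsilon^{-1}})$.

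From here, (1) is immediate: the monotonicity of $F$ on an interval around $x_0$ plus the bound on $F(x_0)$ yields a unique zero $\xi$ with $|\xi-x_0| \leq |F(x_0)|/\inf F' \leq c(\|m-\overline{m}_{x_0}\|_\epsilon + e^{-\zeta'\epsilon^{-1}})$, and the restriction $|x|\leq (1-\zeta')\epsilon^{-1}$ together with the quantitative lower bound on $F'$ makes the zero unique in that larger window. For (2) I expand $0 = F(\xi) = F(x_0) + F'(x_0)(\xi-x_0) + O((\xi-x_0)^2)$, solve for $\xi-x_0 = -F(x_0)/F'(x_0) + O((\xi-x_0)^2)$, and use $F'(x_0) = 4/3 + O(\|m-\overline{m}_{x_0}\|_\epsilon + e^{-\zeta'\epsilon^{-1}})$ to turn the leading term into $-(3/4)\langle \overline{m}_{x_0}', m-\overline{m}_{x_0}\rangle$; the remainder absorbs into $c(\|m-\overline{m}_{x_0}\|_\epsilon^2 + e^{-\zeta'\epsilon^{-1}})$.

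For (3) I introduce $F^*(\eta) := \langle m^*-\overline{m}_\eta, \overline{m}_\eta'\rangle$, so that $F^*(\xi^*)=0$. Since $\|m^*-\overline{m}_{x_0}\|_\epsilon \leq \|m^*-m\|_\epsilon + \|m-\overline{m}_{x_0}\|_\epsilon \leq 2\delta$, part (1) applied to $m^*$ gives $\xi^*$ inside $\{|x|\leq(1-\zeta')\epsilon^{-1}\}$. Then
\[
F^*(\xi^*) - F^*(\xi) = (F^*)'(\tilde\xi)(\xi^*-\xi),\qquad F^*(\xi^*)=0,\qquad F^*(\xi) = \langle m^*-m, \overline{m}_\xi'\rangle,
\]
and dividing by the uniform lower bound on $(F^*)'$ yields $|\xi^*-\xi| \leq c\int \overline{m}_\xi'(x)|m^*-m|(x)\,dx$; replacing $\overline{m}_\xi'$ by $\overline{m}_{x_0}'$ introduces only a factor $1+O(|\xi-x_0|)$ thanks to the Lipschitz property of $\xi\mapsto \overline{m}_\xi'$ and the bound in (1). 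Finally, (4) uses the N.b.c. symmetry of the extension $\widehat m$: any center $\xi$ of $\widehat m$ in $\mathbb{R}$ must come in the reflected/translated family $\{\pm\xi + 4k\epsilon^{-1}\}$, but the local uniqueness in (1) pins down the one inside $\Omega_\epsilon$.

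The step I expect to be the main obstacle is controlling the tail contributions uniformly in $\epsilon$: each time I integrate $\overline{m}_{x_0}'$ or $\overline{m}_{x_0}''$ against $m-\overline{m}_{x_0}$ I only have a sup-norm estimate on $\Omega_\epsilon$, and I must carefully split the integral and use $|x_0|\leq(1-\zeta)\epsilon^{-1}$ together with the exponential decay of derivatives of $\tanh$ to produce precisely the $e^{-\zeta'\epsilon^{-1}}$ corrections that appear in the statement. Once these tail bounds are packaged cleanly, the rest of the argument is quantitative implicit-function reasoning and follows the scheme of Prop.~3.2 in \cite{BraDP}.
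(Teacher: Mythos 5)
The paper never proves this proposition: it is recalled verbatim from Prop.\ 3.2 of \cite{BraDP} ("as in Prop 3.2"), so there is no in-paper argument to compare yours with, and I can only judge your proposal on its own terms (its spirit — a quantitative analysis of $F(\xi)=\langle m-\overline{m}_\xi,\overline{m}_\xi'\rangle$ — is indeed the standard one behind the cited result). The local pieces are sound: $F'(\xi)=\norm{\overline{m}_\xi'}_{L^2}^2-\langle m-\overline{m}_\xi,\overline{m}_\xi''\rangle$ with $\norm{\overline{m}'}_{L^2}^2=4/3$, the tail splitting giving $|F(x_0)|\le c(\norm{m-\overline{m}_{x_0}}_\epsilon+e^{-\zeta'\epsilon^{-1}})$, the Taylor expansion yielding part (2) with the factor $3/4$, and the mean-value argument for part (3) (modulo the harmless point that you invoke part (1) for $m^*$ with $\norm{m^*-\overline{m}_{x_0}}_\epsilon\le 2\delta$, so the $\delta$ in the hypothesis must be taken half as large).

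The genuine gap is in the uniqueness claims away from $x_0$. Your statement that "the quantitative lower bound on $F'$ makes the zero unique in that larger window" does not work: $F'(\xi)\ge 1$ is available only for $|\xi-x_0|$ small, because only there is $m-\overline{m}_\xi$ uniformly small where $\overline{m}_\xi''$ is concentrated; for $\xi$ at distance of order one (up to order $\epsilon^{-1}$) from $x_0$ but still inside $\{|x|\le(1-\zeta')\epsilon^{-1}\}$, the term $\langle m-\overline{m}_\xi,\overline{m}_\xi''\rangle$ is of order one and $F$ need not be monotone. Uniqueness in the full window must come from showing $F$ itself stays away from zero there, e.g.\ $F(\xi)=g(\xi-x_0)+O(\delta+e^{-\zeta'\epsilon^{-1}})$ with $g(h)=\int(\overline{m}(y+h)-\overline{m}(y))\,\overline{m}'(y)\,dy$, which is odd, strictly increasing and tends to $\pm2$, so $|F(\xi)|$ is bounded below once $|\xi-x_0|$ exceeds $c(\delta+e^{-\zeta'\epsilon^{-1}})$. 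The same computation is what part (4) actually requires: your symmetry argument does not close it, since the reflection through $\epsilon^{-1}$ sends a hypothetical center $\xi\in((1-\zeta')\epsilon^{-1},\epsilon^{-1}]$ to a point outside $\Omega_\epsilon$ (and, moreover, to a "center" relative to $-\overline{m}$ rather than $\overline{m}$), so no contradiction arises from the family $\{\pm\xi+4k\epsilon^{-1}\}$ alone; instead one rules out centers in the boundary strips by noting that there $\widehat m$ and its reflection are $\delta$-close to $\pm1$, whence $F(\xi)\approx\pm2\neq0$. With these two points repaired, the rest of your scheme goes through.
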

\begin{definition}
Given $m \in { C(\mathbb{R})}$, we define $\xi(m)$ as the center of 
$m$ in case $m$ satisfies the conditions of Proposition \ref{propositions for center} with some $\zeta >0$ and we say that $\xi(m)$ is proper. Otherwise we set $\xi(m) = 0$.  The estimate \eqref{same convergence} suggests us to define
\begin{equation}
\quad  \xi_{t} := \xi \bigg(\dfrac{m_{1,t}+m_{2,t}}{2}\bigg)
\end{equation}
\end{definition}
We now present the procedure to conclude the theorem.

Firstly, we shall see that when initial datum is close to the same instanton, then it will get "much closer" to an instanton up to time $t=\epsilon^{-b}$ ($b<1/10$)
\begin{prop}\label{prop 3.4}
For any $0 < \zeta < 1$ and $0 < a \leq 1/4$ there are positive constants $C$ and $b<1/10,$ and given $n,c_n$, so that the following holds. Suppose that for $m_{1,0},m_{2,0}$ satisfy N.b.c in $\Omega_{\epsilon}$, there is $x_0: |x_0| \leq (1-\zeta)\epsilon^{-1}$ and
\begin{equation}\label{m - instanton}
\norm{m_{i,0}-\overline{m}_{x_0}}_\epsilon \leq \epsilon^{1/4} \quad \forall i=1,2
\end{equation} Denote by $(m_{1,t},m_{2,t})$ the process starting from $(m_{1,0},m_{2,0})$ and $\xi = \xi_{\epsilon^{-b}}$, then
\begin{equation}
P^{\epsilon} \bigg(\sup_{t\leq \epsilon^{-b}} \norm{m_{i,t}-\overline{m}_{x_{0}}}_{\epsilon} \leq C{\epsilon^{1/4}}; \norm{m_{i,\epsilon^{-b}} - \overline{m}_{\xi}}_{\epsilon}\leq \epsilon^{1/2-a} \; \forall i=1,2\bigg) \geq 1 - c_n\epsilon^n 
\end{equation}
\end{prop}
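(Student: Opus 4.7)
My strategy is to linearize around $\overline{m}_{x_0}$ via Proposition \ref{prop : 2D linearized sys}, pass to the diagonalized variables $U^{\pm}_t := u_{1,t} \pm u_{2,t}$ with $u_{i,t} := m_{i,t} - \overline{m}_{x_0}$ as in Corollary \ref{corro 2D}, and work throughout on the good noise event $G_\epsilon := G_{1,\epsilon}(a) \cap G_{2,\epsilon}(a)$ of Proposition \ref{prop noise}. This event has probability at least $1 - 2 c_n \epsilon^n$ and provides $\|W_{i,t,x_0}\|_\infty \le \epsilon^{-a}(t \vee 1)^{1/2}$ together with $\|R_{i,t}\|_\infty \le \epsilon^{-a}$ for $t \le \epsilon^{-2}$. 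The key advantage of the diagonalization is that the linear coupling $\pm\lambda(v-u)$ cancels in the $U^{+}$-equation and becomes a pure damping $-2\lambda U^{-}$ in the $U^{-}$-equation, so the only coupling between components that remains sits inside the quadratic and cubic nonlinearities, which are $O(\epsilon^{1/2})$ in sup norm as long as $\|u_{i,t}\|_\epsilon = O(\epsilon^{1/4})$.

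\textbf{Step 1: bootstrap for the uniform bound.} I introduce the stopping time
\begin{equation*}
\tau := \inf\{\, t \le \epsilon^{-b} : \max_{i=1,2}\|u_{i,t}\|_\epsilon > C \epsilon^{1/4}\,\},
\end{equation*}
with $C$ to be chosen. On $\{\tau > s\}$ the drift in the $U^{+}$-equation has sup-norm $O(\epsilon^{1/2})$, so combining the kernel bound $\sup_{x,t}\int g_{t,x_0}(x,y)\,dy \le \kappa$ from \eqref{asymptotic of operator g} with the noise estimate on $G_\epsilon$ yields, via Duhamel,
\begin{equation*}
\|U^{+}_t\|_\infty \le \kappa \|U^{+}_0\|_\infty + C' \kappa\, t\, \epsilon^{1/2} + 2\sqrt{\epsilon}\, \epsilon^{-a}(t \vee 1)^{1/2}, \qquad t \le \tau,
\end{equation*}
and the analogous bound for $U^{-}_t$ is even sharper thanks to the extra $e^{-2\lambda(t-s)}$ factor. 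For $a \le 1/4$ and $b < 1/10$ small enough that $1/2 - b > 1/4$ and $1/2 - a - b/2 > 1/4$, each term above stays below $\tfrac{1}{3}(2\kappa+1)\epsilon^{1/4}$ throughout $[0,\epsilon^{-b}]$; choosing $C = 2\kappa + 1$ therefore forces $\tau = \epsilon^{-b}$ on $G_\epsilon$, which proves the first claim.

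\textbf{Step 2: relaxation and center conversion.} Using the noise splitting $W_{i,t,x_0} = B_{i,t,x_0}\tilde{m}_{x_0}' + R_{i,t}$ from Proposition \ref{prop noise} together with the spectral-gap bound of Proposition \ref{1 asymptotic of operator g}, at $t = \epsilon^{-b}$ the deterministic initial data contribute only their $\tilde{m}_{x_0}'$-projection, since the orthogonal part is $O(e^{-\alpha \epsilon^{-b}} \epsilon^{1/4})$, hence super-polynomially small. The entire $U^{-}_t$ process is damped by $e^{-2\lambda t}$ in all directions, and the residual noise $R_{i,t}$ contributes at most $\sqrt{\epsilon}\|R_{i,t}\|_\infty \le \epsilon^{1/2-a}$. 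The nonlinear correction, bounded by $\kappa \epsilon^{-b} \epsilon^{1/2} = \epsilon^{1/2-b}$, also fits the threshold provided $b \le a$. These estimates combine to give
\begin{equation*}
u_{i,\epsilon^{-b}} = \tfrac{3}{4} \langle \overline{m}_{x_0}', U^{+}_{\epsilon^{-b}}/2 \rangle\, \overline{m}_{x_0}' + \rho_{i,\epsilon^{-b}}, \qquad \|\rho_{i,\epsilon^{-b}}\|_\infty \le \tfrac{1}{2}\epsilon^{1/2-a}.
\end{equation*}
Applying Proposition \ref{propositions for center}(2) to $(m_{1,t}+m_{2,t})/2 = \overline{m}_{x_0} + U^{+}_t/2$ at $t = \epsilon^{-b}$ yields $\xi - x_0 = -\tfrac{3}{4}\langle \overline{m}_{x_0}', U^{+}_{\epsilon^{-b}}/2\rangle + O(\epsilon^{1/2})$, and a first-order Taylor expansion of $\overline{m}_\xi$ around $x_0$ (valid because $|\xi - x_0| = O(\epsilon^{1/4})$) gives $\overline{m}_\xi - \overline{m}_{x_0} = -(\xi-x_0)\overline{m}_{x_0}' + O(\epsilon^{1/2})$. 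Substituting,
\begin{equation*}
m_{i,\epsilon^{-b}} - \overline{m}_\xi = u_{i,\epsilon^{-b}} + (\xi-x_0)\overline{m}_{x_0}' + O(\epsilon^{1/2}) = \rho_{i,\epsilon^{-b}} + O(\epsilon^{1/2}),
\end{equation*}
whose sup-norm on $\Omega_\epsilon$ is $\le \epsilon^{1/2-a}$ for $\epsilon$ small.

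The main obstacle is the exponent bookkeeping: the bootstrap in Step 1 must close with a time-independent $C$ while the nonlinear remainder in Step 2 must fit inside $\epsilon^{1/2-a}$, which forces $b$ to depend on $a$, for instance $b := \min(a, 1/11)$. Once this tuning is made, the diagonalization of Corollary \ref{corro 2D}, the kernel integrability, and the spectral-gap estimate of Proposition \ref{1 asymptotic of operator g} carry the rest of the argument.
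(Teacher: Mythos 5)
Your overall architecture (linearize around an instanton, diagonalize into $U^\pm=u_{1}\pm u_{2}$, spectral gap for $U^+$, extra $e^{-2\lambda t}$ damping for $U^-$, a bootstrap on $[0,\epsilon^{-b}]$, then convert to a statement about the center) is the same as the paper's, and your variant of linearizing at $x_0$ and cancelling the $\tilde m_{x_0}'$-projection against $\xi-x_0$ via Proposition \ref{propositions for center}(2), instead of linearizing at the center of the averaged initial datum as the paper does, is a legitimate reorganization. But there is a genuine gap at the heart of both your steps: you control only $\norm{u_{i,t}}_\epsilon$ (sup over $\Omega_\epsilon$) in the stopping time, while every estimate you invoke — the kernel bound $\sup_{x,t}\int g_{t,x_0}(x,y)dy\le\kappa$, the spectral-gap bound of Proposition \ref{1 asymptotic of operator g}, and the Duhamel bound on the drift — is a sup-norm estimate on all of $\mathbb{R}$, where $m_{i,t}$ is the reflected periodic extension and $\overline m_{x_0}$ is not periodic. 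Outside $\Omega_\epsilon$, near the reflected transition point $2\epsilon^{-1}-x_0$, one has $|u_{i,t}|\approx 2$, so the drift $3\overline m_{x_0}u^2+u^3$ is $O(1)$ there (not $O(\epsilon^{1/2})$), and $\norm{U_0^+}_\infty$ is $O(1)$ (not $O(\epsilon^{1/4})$); hence your Step 1 inequality and the claim that the orthogonal part of the initial data is $O(e^{-\alpha\epsilon^{-b}}\epsilon^{1/4})$ are unjustified as written. The paper flags exactly this point ("when $\norm{m-\overline m}_\epsilon$ is small ... $\norm{m-\overline m}_\infty$ is not small") and resolves it by the barrier-lemma localization: it cuts off the data to $\hat m_{i,0}$, compares $m_{i,t}$ and $\hat m_{i,t}$ up to $c_n\epsilon^n$ on $|x-\xi_0|\le 10^{-5}\zeta\epsilon^{-1}$, runs the linearized analysis for $\hat m$, and then handles the remaining outer part of $\Omega_\epsilon$ by a second localization and linearization around $\pm1$. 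Your proof needs this localization (or an explicit Gaussian-decay-of-the-kernel argument confining the $O(1)$ far field), and it also silently uses $\norm{m_{i,t}}_\infty\le 2$ for the cubic terms, which the paper secures through the event $B_\epsilon$ and the appendix lemma \eqref{bounded by 2}.

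There is also a quantitative problem in your exponent bookkeeping. You work on $G_{1,\epsilon}(a)\cap G_{2,\epsilon}(a)$ with the same $a$ as in the statement, so the noise contributes $\sqrt{\epsilon}\,\epsilon^{-a}(t\vee1)^{1/2}=\epsilon^{1/2-a-b/2}$ at $t=\epsilon^{-b}$. Your own Step 1 condition $1/2-a-b/2>1/4$ is then impossible at the admissible endpoint $a=1/4$; and in Step 2 the noise that the center cancellation does not remove — in particular the noise continuously injected into $U^-$, which is not wiped out by $e^{-2\lambda t}$ (only the initial datum is; the crude bound there is again $O(\epsilon^{1/2-a-b/2})$) — exceeds the target $\epsilon^{1/2-a}$, and even the $R$-term alone already saturates $\epsilon^{1/2-a}$, contradicting $\norm{\rho_{i}}_\infty\le\tfrac12\epsilon^{1/2-a}$. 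The paper avoids this by running the estimates with an auxiliary small noise exponent $a'$ and imposing $\hat a:=a'+b/2<a$, $b<1/4$ (its condition \eqref{condition a',b,a'+b/2}), so the intermediate bound $\epsilon^{1/2-\hat a}$ strictly beats $\epsilon^{1/2-a}$ after the center comparison; you need the same decoupling of the noise exponent from $a$, with $b$ chosen so that $a'+b/2<a$, rather than $b=\min(a,1/11)$.
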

By iterating above procedure, the solution will get close to instantons until time $t=\epsilon^{-1}$.
\begin{prop}\label{Proposistion estimate center for longer time}
Let $\zeta, a, b$ and $m_0$ be as in Proposition \ref{prop 3.4}. Then there is $c'$ and given $n, c_n$ so that, setting $s_k = k\epsilon^{-b}, \; k \in \mathbb{N},$
\begin{equation}\label{eqn 3.13}
P^{\epsilon} \bigg( \sup_{\epsilon^{-b} \leq s_k \leq \epsilon^{-1}}\norm{m_{i,s_k}-\overline{m}_{\xi_{s_k}}}_{\epsilon} \leq \epsilon^{1/2-a} \quad \forall i=1,2 \bigg) \geq 1 -c_n \epsilon^n
\end{equation}
\begin{equation}\label{eqn 3.14}
P^{\epsilon} \bigg( |\xi_t - x_0| \leq c'(1 \vee t)\epsilon^{1/4} \quad \forall t \leq \epsilon^{-1} \bigg) \geq 1 - c_n\epsilon^n
\end{equation}
\end{prop}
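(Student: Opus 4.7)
The plan is to iterate Proposition~\ref{prop 3.4} on the grid $\{s_k = k\epsilon^{-b}\}$, using the Markov property and treating $\xi_{s_k}$ as the new reference center on each block $[s_k, s_{k+1}]$. The initial block is handled by Proposition~\ref{prop 3.4} as stated; its output $\norm{m_{i,s_1}-\overline{m}_{\xi_{s_1}}}_\epsilon \le \epsilon^{1/2-a}\le\epsilon^{1/4}$ (recall $a\le 1/4$) is strictly stronger than the hypothesis~\eqref{m - instanton} with $x_0$ replaced by $\xi_{s_1}$, so the proposition reapplies to the shifted process $(m_{1,s_1+\cdot}, m_{2,s_1+\cdot})$. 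Writing $E_k$ for the good event on the $k$-th block, there are at most $N_\epsilon = \epsilon^{-1+b}$ blocks, so a union bound yields $P^\epsilon\!\left(\bigcap_k E_k\right)\ge 1 - N_\epsilon c_n \epsilon^n = 1 - c_n\epsilon^{n-1+b}$; since $n$ is arbitrary, relabeling produces~\eqref{eqn 3.13}.

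For~\eqref{eqn 3.14} I would combine the per-block sup bound $\norm{m_{i,t} - \overline{m}_{\xi_{s_k}}}_\epsilon \le C\epsilon^{1/4}$, valid for $t \in [s_k,s_{k+1}]$ on $E_k$, with part~(1) of Proposition~\ref{propositions for center} applied to the averaged function $\tfrac{1}{2}(m_{1,t}+m_{2,t})$ relative to $\xi_{s_k}$. This gives $|\xi_t - \xi_{s_k}| \le c\bigl(C\epsilon^{1/4} + e^{-\zeta'\epsilon^{-1}}\bigr) \le c''\epsilon^{1/4}$ uniformly in $t \in [s_k, s_{k+1}]$. Telescoping over the at most $\lfloor t\epsilon^b\rfloor+1$ relevant blocks then produces
\[
|\xi_t - x_0| \le c''\bigl(1+\lfloor t\epsilon^b\rfloor\bigr)\epsilon^{1/4} \le c'(1\vee t)\epsilon^{1/4},
\]
which is precisely~\eqref{eqn 3.14}.

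The hard part will be ensuring that the hypotheses of Proposition~\ref{prop 3.4} genuinely persist across all blocks, namely that each $\xi_{s_k}$ lies in an admissible region $\{|x|\le (1-\tilde\zeta)\epsilon^{-1}\}$ for some fixed $\tilde\zeta\in(0,\zeta)$, so that the center on the next block is proper in the sense of Definition at the start of Section~4. Fortunately this is self-consistent: the shift bound just derived gives $|\xi_{s_k}-x_0|\le c'\epsilon^{-3/4}=o(\epsilon^{-1})$, hence $|\xi_{s_k}|\le (1-\zeta)\epsilon^{-1}+O(\epsilon^{-3/4})\le (1-\tilde\zeta)\epsilon^{-1}$ for $\epsilon$ sufficiently small, and the bootstrap closes. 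Formally one runs the induction on a slowly shrinking family of admissible regions, a standard device that does not affect the stated constants.
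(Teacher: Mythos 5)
Your block-iteration argument—restarting Proposition \ref{prop 3.4} at each grid point $s_k$ with the new center $\xi_{s_k}$ via the Markov property, taking a union bound over the $\epsilon^{-1+b}$ blocks, and telescoping the per-block center shifts obtained from Proposition \ref{propositions for center}(1) to get \eqref{eqn 3.14}—is correct and is exactly the argument the paper has in mind, since it omits the proof and points to Lemma 3.5 of \cite{BraDP}, which proceeds by this same iteration. The only points worth making explicit in a full write-up are that the event $\{\sup_{t\le\epsilon^{-2}}\norm{m_{i,t}}_\infty\le 2\}$ from \eqref{bounded by 2} must be intersected in so that Proposition \ref{propositions for center} applies, and that the constants in Proposition \ref{prop 3.4} are uniform over centers in $\{|x_0|\le(1-\tilde\zeta)\epsilon^{-1}\}$, which is what closes your bootstrap on the admissible region.
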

The last step is to show that the center $\xi_{\epsilon^{-1}t} - x_0$ behave as Brownian motion in the limit $\epsilon \rightarrow 0$
\begin{prop} \label{weak convergence}
Given any $\zeta > 0$ and $\epsilon > 0$, let $(m_{1,t},m_{2,t})$ be the process that starts from $(m_{1,0},m_{2,0})$, with $|x_0| \leq (1-\zeta)\epsilon^{-1}$. Define
\begin{equation}
X_{t}^{\epsilon} := \xi_{\epsilon^{-1}t} - x_0
\end{equation}
and let $P^{\epsilon}$ be the law on $C(\mathbb{R}_+,\mathbb{R})$ of $X_{t}^{\epsilon}$. Then $P^{\epsilon}$ converges weakly as $\epsilon \rightarrow 0$ to $P$ the law of the Brownian motion with diffusion coefficient $D=3/8$ that start from 0.
\end{prop}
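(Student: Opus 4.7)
The plan is to combine tightness of the laws $\{P^{\epsilon}\}$ on $C([0,T],\mathbb{R})$ with an identification of the limit via a martingale characterisation driven by a mesoscopic increment formula. Throughout, I work on the high-probability events supplied by Propositions \ref{Proposistion estimate center for longer time} and \ref{prop noise}: on these, $\xi_t$ is proper for every $t\leq \epsilon^{-1}T$, $\|m_{i,s_k}-\overline{m}_{\xi_{s_k}}\|_{\epsilon}\leq \epsilon^{1/2-a}$ at every mesoscopic time $s_k=k\epsilon^{-b}$, and the noises $W_{i,t,\xi_{s_k}}$ admit the Gaussian decomposition and bounds of Proposition \ref{prop noise}; the complementary event has probability $O(\epsilon^n)$ and is negligible.

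The heart of the argument is a mesoscopic increment formula. Applying part (2) of Proposition \ref{propositions for center} at base point $x_0=\xi_{s_k}$ with $m=\tfrac{1}{2}(m_{1,s_{k+1}}+m_{2,s_{k+1}})$, then inserting the linearised integral equation \eqref{eqn: uu} restarted at time $s_k$ and centred at $\overline{m}_{\xi_{s_k}}$, averaging its two components (which cancels the linear coupling $\lambda(v-u)$), using that $g_{r,\xi_{s_k}}$ acts as the identity on $\overline{m}_{\xi_{s_k}}'$, and exploiting the defining relation of $\xi_{s_k}$ to kill the initial-datum contribution, one obtains
\[
\xi_{s_{k+1}}-\xi_{s_k} \;=\; -\tfrac{3}{4}\sqrt{\epsilon}\,\bigl\langle \overline{m}_{\xi_{s_k}}',\,\tfrac{1}{2}\bigl(W_{1,\epsilon^{-b},\xi_{s_k}}+W_{2,\epsilon^{-b},\xi_{s_k}}\bigr)\bigr\rangle \;+\; \mathcal{E}_k,
\]
with the noises restarted at $s_k$ and $\mathcal{E}_k$ collecting the $O(\|\cdot\|_{\epsilon}^{2}+e^{-\zeta'\epsilon^{-1}})$ remainder of Proposition \ref{propositions for center}(2) together with the quadratic and cubic contributions of the averaged integral equation. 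By Proposition \ref{prop noise}, $\langle \overline{m}_{\xi_{s_k}}',W_{i,r,\xi_{s_k}}\rangle = B_{i,r,\xi_{s_k}}/\sqrt{D}$ is a Brownian motion of diffusion coefficient $D_{\epsilon}/D$, and $B_{1,\cdot},B_{2,\cdot}$ are independent; hence the Gaussian term is conditionally centred with variance $(3/4)^{2}\epsilon\cdot (D_{\epsilon}/D)\cdot (1/2)\cdot\epsilon^{-b}=\tfrac{3}{8}D_{\epsilon}\,\epsilon^{1-b}$, which, summed over the $\lfloor \epsilon^{b-1}T\rfloor$ steps, yields $\tfrac{3}{8}D_{\epsilon}T\to \tfrac{3}{8}T$.

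Tightness in $C([0,T],\mathbb{R})$ is obtained from a Kolmogorov-type moment estimate, combining part (3) of Proposition \ref{propositions for center} with the integral equation and the noise bounds of Proposition \ref{prop noise} to produce $\mathbb{E}|X^{\epsilon}_{t+h}-X^{\epsilon}_t|^{4}\leq Ch^{2}$ for mesoscopic $h$, while \eqref{eqn 3.14} and continuity of $\xi_t$ handle the sub-mesoscopic scales. For identification I would use the martingale problem: for smooth $f$, Taylor-expand $f(X^{\epsilon}_{s_{k+1}})-f(X^{\epsilon}_{s_k})$ to second order in the displayed increment, use the explicit variance $\tfrac{3}{8}D_{\epsilon}\epsilon^{1-b}$ for the quadratic term and conditional zero mean for the first-order term, so that $f(X^{\epsilon}_t)-\tfrac{3}{16}\int_0^t f''(X^{\epsilon}_s)\,ds$ is an asymptotic martingale; any subsequential limit therefore solves the well-posed martingale problem for $\tfrac{3}{16}\partial_{xx}$ started at $0$, which uniquely characterises Brownian motion with diffusion coefficient $3/8$.

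The main obstacle I anticipate is the cumulative control of $\sum_k \mathcal{E}_k$ across $\sim\epsilon^{b-1}T$ mesoscopic steps: the dominant nuisance is the quadratic piece $\int_0^{\epsilon^{-b}}\!\langle \overline{m}_{\xi_{s_k}}', g_{\epsilon^{-b}-r,\xi_{s_k}}[\overline{m}_{\xi_{s_k}}(u_{1,r}^{2}+u_{2,r}^{2})]\rangle\,dr$, whose naive sup-norm estimates do not close. One has to exploit the odd symmetry of $\overline{m}_{\xi_{s_k}}'\overline{m}_{\xi_{s_k}}$ around $\xi_{s_k}$ against the reflection-invariant law of the noise to kill the conditional mean, and then invoke martingale cancellation for the residual fluctuations; the spectral gap of Proposition \ref{1 asymptotic of operator g} further damps contributions from the orthogonal complement of $\overline{m}_{\xi_{s_k}}'$. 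The scaling choices $a\leq 1/4$, $b<1/10$ furnished by Proposition \ref{prop 3.4} are tailored to leave exactly enough margin for these cancellations, so that the Gaussian main term dominates and the martingale characterisation identifies the limit as Brownian motion with diffusion coefficient $3/8$.
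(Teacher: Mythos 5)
Your overall architecture is the same as the paper's: discretize $\xi$ at mesoscopic times, extract the increment from Proposition \ref{propositions for center}(2) combined with the averaged linearized system (the coupling cancels in the sum), identify the per-step Gaussian contribution $\tfrac{3}{8}D_\epsilon$ per unit macroscopic time via Proposition \ref{prop noise}, and conclude by a conditional-moment/martingale characterisation (the paper invokes the criterion quoted from \cite{BraDP}, with block length $T_\epsilon=n_\epsilon\epsilon^{-b}\approx\epsilon^{-1/10}$ rather than your single step $\epsilon^{-b}$). The point where your proposal does not close is exactly the one you flag yourself: the cumulative control of $\sum_k\mathcal{E}_k$, and the mechanism you offer for it is not sufficient as stated.

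The gap is this: the reflection-symmetry argument cannot be applied directly to the true process restarted at $s_k$, because its conditional law given $F_{s_k}$ is \emph{not} reflection symmetric about $\xi_{s_k}$. The initial deviation $u_{i,0}=m_{i,s_k}-\overline{m}_{\xi_{s_k}}$ is an arbitrary $F_{s_k}$-measurable function of size up to $\epsilon^{1/2-a}$; only its component along $\overline{m}_{\xi_{s_k}}'$ vanishes. Its quadratic image under the dynamics produces a conditional mean of order $\epsilon^{1-2\hat a}$ per step which is neither odd under reflection nor a martingale increment, and at your step size $\epsilon^{-b}$ (the paper's own constraints \eqref{condition a',b,a'+b/2} force $b<\hat a<a$) the sum over the $\sim\epsilon^{b-1}$ steps diverges, so ``symmetry of the noise plus martingale cancellation'' does not kill it. The paper's resolution is precisely the ingredient missing from your sketch: couple the true process with an auxiliary process restarted from the \emph{exact} instanton $\overline{m}_{\xi_{s_k}}$ driven by the same noise, and prove (Lemma \ref{lemma 1}) that after a block of length $T_\epsilon\approx\epsilon^{-1/10}$ the two centers differ by $O(\epsilon^{1-2a})$ with overwhelming probability; the symmetry argument is then applied, exactly, to a localized version of the instanton-started process (kernel $\tilde H$, noise $\tilde Z$, map $Gf(x)=-f(2x_0-x)$ in Lemma \ref{lemma 2}), giving a conditional mean that is zero up to exponentially small localization errors, while the moment bounds and the variance identification come from the explicit approximation \eqref{2d ok} rather than from the Gaussian term alone. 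The coarse block $T_\epsilon$ is chosen so that the coupling error $\epsilon^{1-2a}$ is $o(\epsilon T_\epsilon)$ for small $a$, which is what makes $\gamma_1$ vanish; without this restart-at-the-instanton comparison (and the attendant coarsening), your error sum is not under control and the identification of the limit is not justified.
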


Finally, Theorem \ref{main theorem} is followed from Proposition \ref{prop 3.4}, \ref{Proposistion estimate center for longer time} and \ref{weak convergence}.  

\section{Proofs of part 4}
This section gives formal proofs of Proposition \ref{prop 3.4} and \ref{weak convergence}. We omit the proof of Proposition \ref{Proposistion estimate center for longer time} since it is similar to Lemma 3.5, \cite{BraDP}.   
\subsection{Proof of Proposition \ref{prop 3.4}}
\begin{proof}
 From \eqref{m - instanton}, we get
\begin{equation}
 \norm{\dfrac{m_{1,0}+m_{2,0}}{2}- \overline{m}_{x_0}}_\epsilon \leq \epsilon^{1/4}
\end{equation}
Thus, by \eqref{first property of center}
\begin{equation}
|\xi_{0} - x_0| \leq c\epsilon^{1/4}
\end{equation}

for some $c>0$. As a result, $\forall i=1,2$ we have
\begin{equation}\label{1}
\norm{m_{i,0} - \overline{m}_{\xi_{0}}}_{\epsilon} \leq \norm{m_{i,0}-\overline{m}_{x_0}}_\epsilon + \norm{\overline{m}_{x_0} - \overline{m}_{\xi_0}}_\epsilon \leq C\epsilon^{1/4}, \quad C:= 1 + c
\end{equation}
Due to the fact that when $\norm{m-\overline{m}}_\epsilon$ is small and $m$ satisfies N.b.c then $\norm{m-\overline{m}}_\infty$ is not small. This fact can be passed by using "Barrier lemma" for "localization". We define for $i=1,2$
\begin{equation}
\hat{m}_{i,0}(x) = \left\{\begin{array}{l}
m_{i,0}(x) \quad \text{if} \quad |x-\xi_{0}| \leq 10^{-4}\zeta \epsilon^{-1} \\
1 \quad \quad \text{if} \quad x-\xi_0 > 10^{-4}\zeta \epsilon^{-1} +1  \\
-1 \quad \text{if} \quad x- \xi_0 < -10^{-4} \zeta \epsilon^{-1} - 1   
\end{array}\right.
\end{equation}
The remaining part of $\hat{m}_{i,0}(x)$ can be completed by linear interpolation. Let $\hat{m}_{t}$ and $m_{t}$ be the solutions of \eqref{eqn: integral_on_R_1} with inital data respectively $\hat{m}_{0}$ and $m_{0}$ (and same noise). By Proposition \ref{proposition of barrier lemma}, for any $n$ there is $c_n$ so that 
\begin{equation} \label{4.17}
P^{\epsilon} \bigg(\sup_{t \leq \epsilon^{-b}} \sup_{|x-\xi_{0}|\leq 10^{-5}\zeta \epsilon^{-1}} |m_{i,t}(x) - \hat{m}_{i,t}(x)| \leq c_n \epsilon^n \quad \forall i=1,2 \bigg) \geq 1 - c_n\epsilon^n
\end{equation} 
We will prove that there is $\hat{a} < a$ and for any $n, c_n$ so that
\begin{equation}\label{4.18}
\begin{aligned}
P^{\epsilon}\bigg( \sup_{t \leq \epsilon^{-b}} \sup_{|x-\xi_{0}| \leq 10^{-5}\zeta \epsilon^{-1}} |\hat{m}_{i,t}(x) - \overline{m}_{\hat{x}_{0}}(x)| < C\epsilon^{1/4}; \\
\sup_{|x-\xi_{0}| \leq 10^{-5}\zeta \epsilon^{-1}}|\hat{m}_{i,\epsilon^{-b}}(x)-\overline{m}_{\hat{x}_{0}}(x)| < \epsilon^{1/2-\hat{a}} \quad \forall i=1,2\bigg) \geq 1 -c_n \epsilon^n
\end{aligned}
\end{equation}
where $\hat{x}_0 := \xi(\dfrac{\hat{m}_{1,0}+\hat{m}_{2,0}}{2})$, by \eqref{3.7}, for any $n$, there are $c_n>0$ such that
\begin{equation}\label{4.19}
|\hat{x}_0 - \xi_0| \leq c_n\epsilon^n
\end{equation}
Combine \eqref{4.17}, \eqref{4.18} and \eqref{4.19}, we derive
\begin{equation}\label{partial sup}
\begin{aligned}
P^{\epsilon}\bigg( \sup_{t \leq \epsilon^{-b}} \sup_{|x-\xi_{0}| \leq 10^{-5}\zeta \epsilon^{-1}}|m_{i,t}(x)-\overline{m}_{\xi_{0}}(x)| < 2C\epsilon^{1/4};\\
 \sup_{|x-\xi_{0}| \leq 10^{-5}\zeta \epsilon^{-1}}|m_{i,\epsilon^{-b}}(x) - \overline{m}_{\xi_{0}}(x)| < 2\epsilon^{1/2-\hat{a}} \quad \forall i=1,2 \bigg)\geq 1- c_n\epsilon^n
\end{aligned}
\end{equation}
We now prove \eqref{4.18}. Let 
\begin{equation}
\hat{u}_t = \hat{m}_{1,t} - \overline{m}_{\hat{x}_{0}} \quad \hat{v}_t =\hat{m}_{2,t} - \overline{m}_{\hat{x}_{0}}
\end{equation}
Then, by Proposition \eqref{prop : 2D linearized sys} ,$u_t$ and $v_t$ evolve as the following system 
\begin{equation}\label{2D: new eqn 1}
\begin{aligned}
\hat{u}_t &= g_{t,\hat{x}_{0}}\hat{u}_0-\int_0^t  g_{t-s,\hat{x}_{0}}\Big( 3\overline{m}_{\hat{x}_{0}}\hat{u}_s^2 + \hat{u}_s^3 - \lambda(\hat{v}_s-\hat{u}_s) \Big)ds + \sqrt{\epsilon}W_{1,t,\hat{x}_{0}}\\
\hat{v}_t &= g_{t,\hat{x}_{0}}\hat{v}_0-\int_0^t  g_{t-s,\hat{x_0}} \Big( 3\overline{m}_{\hat{x}_{0}}\hat{v}_s^2 + \hat{v}_s^3 - \lambda(\hat{u}_s - \hat{v}_s) \Big)ds + \sqrt{\epsilon}W_{2,t,\hat{x}_{0}}
\end{aligned}
\end{equation}
 Given $a \leq \dfrac{1}{4}$, let $a',b > 0$ such that
\begin{equation} \label{condition a',b,a'+b/2}
  b<1/4, \; \quad  \hat{a}=a'+b/2<a, \quad b< \hat{a} 
\end{equation} 
Setting
\begin{equation}
B_{\epsilon} =  \lbrace \norm{W_{i,t,\hat{x}_0}}_\infty \leq \epsilon^{-a'}t^{1/2} \; \forall i=1,2 \rbrace  \cap_{i=1,2} \lbrace \sup_{t \leq \epsilon^{-b}}\norm{\hat{m}_{i,t}}_\infty \leq 2 \rbrace
\end{equation}
From \eqref{2 bound noise} and \eqref{bounded by 2},  $B_\epsilon$ has very large probability $(P^{\epsilon}(B_{\epsilon}) \geq 1 - c_n \epsilon^n)$.

We shall estimate $\hat{u}_t$ and $\hat{v}_t$ through $\hat{u}_t + \hat{v}_t$ and $\hat{u}_t - \hat{v}_t$. 
It is not difficult to get, in $B_{\epsilon}$, there exist $\alpha,C_1,C_2>0$
\begin{equation}
\norm{\hat{u}_t + \hat{v}_t}_\infty \leq  C_1 e^{-\alpha t}\norm{\hat{u}_0 + \hat{v}_0}_\infty+ C_2\int_0^t (\norm{\hat{u}_s}_\infty^2 + \norm{\hat{v}_s}_\infty^2)ds + 2\epsilon^{1/2-a'}t^{1/2} 
\end{equation}

In fact, by the sum of \eqref{2D: new eqn 1}
\begin{equation}
\begin{aligned}
\hat{u}_t + \hat{v}_t = g_{t,\hat{x}_{0}}(\hat{u}_0+\hat{v}_0)  -\int_0^t g_{t-s,\hat{x}_{0}}\Big( 3\overline{m}_{\hat{x}_{0}}(\hat{u}_s^2+\hat{v}_s^2) &+ \hat{u}_s^3 + \hat{v}_s^3 \Big)ds\\
&+ \sqrt{\epsilon}(W_{1,t,\hat{x}_{0}}+W_{2,t,\hat{x}_{0}})
\end{aligned}
\end{equation}
Since $\hat{x}_0$ is the center of $\hat{u}_0 + \hat{v}_0$ and by \eqref{asymptotic of operator g} and \eqref{1 asymptotic of operator g}, there are $\alpha, C_1,C_2>0$, in $B_\epsilon$
\begin{equation}\label{u_t + v_t}
\norm{\hat{u}_t + \hat{v}_t}_\infty \leq  C_1 e^{-\alpha t}\norm{\hat{u}_0 + \hat{v}_0}_\infty+ C_2\int_0^t (\norm{\hat{u}_s}_\infty^2 + \norm{\hat{v}_s}_\infty^2)ds + 2\epsilon^{1/2-a'}t^{1/2} 
\end{equation}
We now prove that, in $B_{\epsilon}$, there exists $C_4,C_5,C_6>0$
\begin{equation}\label{w_t = u_t - v_t} 
\norm{\hat{u}_t - \hat{v}_t}_\infty \leq C_4\epsilon^{-2\lambda t} \norm{\hat{u}_0 -\hat{v}_0}_\infty + C_5\epsilon^{1/2-a'}t^{1/2} + C_6\int_0^t (\norm{\hat{u}_s}_\infty^2 + \norm{\hat{u}_s}_\infty^2)ds
\end{equation}

In fact, by  substract of two equations in \eqref{2D: new eqn 1} 

\begin{equation}\label{2D: eqn u-v}
\begin{aligned}
\hat{u}_t - \hat{v}_t = g_{t,\hat{x}_0}(\hat{u}_0-\hat{v}_0) - &2\lambda \int_0^t g_{t-s,\hat{x}_0}(\hat{u}_s-\hat{v}_s)ds + \sqrt{\epsilon}(W_{1,t,\hat{x}_0}-W_{2,t,\hat{x}_0})\\
&+\int_0^t g_{t-s,\hat{x}_0}(3\overline{m}_{\hat{x}_0}(\hat{u}_s^2-\hat{v}_s^2) +(\hat{u}_s^3 -\hat{v}_s^3) )ds
\end{aligned}
\end{equation}
Denote $w_t:= \hat{u}_t-\hat{v}_t$ and define 
\begin{equation} \label{A_t}
A_t:=w_t+2\lambda \int_0^t g_{t-s,\hat{x}_{0}}w_s ds - g_{t,\hat{x}_0}w_0 
\end{equation}
thus, by \eqref{2D: eqn u-v},  
\begin{equation}
A_t = - \int_0^t g_{t-s,\hat{x}_{0}}(3\overline{m}_{\hat{x}_{0}}(\hat{u}_s^2-\hat{v}_s^2) + \hat{u}_s^3-\hat{v}_s^3)ds +\sqrt{\epsilon}(W_{1,t,\hat{x}_{0}}-W_{2,t,\hat{x}_{0}})
\end{equation}
and $A_t$ is bounded, in $B_\epsilon$, there is $C_3 >0$
\begin{equation}
 \norm{A_t}_{\infty} \leq   2\epsilon^{1/2-a'}t^{1/2} + C_3\int_0^t(\norm{\hat{u}_s}_\infty^2 + \norm{\hat{v}_s}_\infty^2)ds
\end{equation}
Moreover, by \eqref{A_t}, $w_t$ is represented through $A_t$,
\begin{equation}
\begin{aligned}
w_t &= e^{-2\lambda t}g_{t,\hat{x}_0}w_0 +A_t - 2\lambda \int_0^t e^{-2\lambda(t-s)}g_{t-s,\hat{x}_{0}}A_sds \\
\end{aligned}
\end{equation}
As a result, $w_t$ could be bounded by $A_t$,there exists $C_4, C_5 $ and $C_6 > 0$ 
\begin{equation}
 \norm{w_t}_\infty \leq C_4\epsilon^{-2\lambda t} \norm{w_0}_\infty + C_5\epsilon^{1/2-a'}t^{1/2} + C_6\int_0^t (\norm{\hat{u}_s}_\infty^{2} + \norm{\hat{v}_s}_\infty^2)ds
\end{equation}
Setting $z_t := \norm{\hat{u}_t}_\infty + \norm{\hat{v}_t}_\infty$, since \eqref{w_t = u_t - v_t}, \eqref{u_t + v_t} and $z_t \leq \norm{\hat{u}_t + \hat{v}_t}_\infty+ \norm{\hat{u}_t - \hat{v}_t}_\infty$, we obtain, there are $C_7,C_8 > 0$
\begin{equation}
\begin{aligned}\label{esti z_t}
z_t \leq C_1 e^{-\alpha t}\norm{\hat{u}_0 + \hat{v}_0}_\infty + C_4 e^{-2\lambda t}\norm{\hat{u}_0-\hat{v}_0}_\infty + C_7 \epsilon^{1/2-a'}t^{1/2} + C_8 \int_0^t z_s^2 ds
\end{aligned}
\end{equation}
or
\begin{equation}\label{estimation of z_t}
z_t \leq 2(C_1+C_4)(\norm{\hat{u}_0}_\infty + \norm{\hat{v}_0}_\infty) + C_7 \epsilon^{1/2-a'}t^{1/2} + C_8 \int_0^t z_s^2 ds
\end{equation}
The order of $\norm{\hat{u}_0}_\infty$ is $O(\epsilon^{1/4})$ as shown below
\begin{equation}\label{2D: order u_0}
\begin{aligned}
\norm{\hat{u}_0}_{\infty} &\leq \norm{\hat{m}_{1,0}-\overline{m}_{x_0}}_\infty + \norm{\overline{m}_{x_0} - \overline{m}_{\hat{x}_0}}_\infty \\
&\leq \norm{\hat{m}_{1,0}-\overline{m}_{\xi_0}}_\infty + \norm{\overline{m}_{\xi_0} - \overline{m}_{x_0}}_\infty + \norm{\overline{m}_{x_0} - \overline{m}_{\hat{x}_0}}_\infty  \\
&\leq \sup_{|x-\xi_0| \leq 10^{-4}\zeta \epsilon^{-1}} |\hat{m}_{1,0}-\overline{m}_{\xi_0}| + |\xi_0 - x_0| + |x_0 - \hat{x}_0| \\
&\leq \sup_{|x-\xi_0| \leq 10^{-4}\zeta \epsilon^{-1}} |m_{1,0}-\overline{m}_{\xi_0}| + 2c\epsilon^{1/4} + c_n \epsilon^n \\
&\leq \dfrac{C_9}{2(C_1 + C_4)}\epsilon^{1/4} \quad (\text{for some constant} \;C_9 \geq 1)
\end{aligned}
\end{equation}
In a similar way, there is $C_{10}>0$
\begin{equation}\label{2D: order v_0}
\norm{\hat{v}_0}_{\infty} \leq \dfrac{C_{10}}{2(C_1+C_4)}\epsilon^{1/4}
\end{equation}
Three estimations in \eqref{estimation of z_t},\eqref{2D: order u_0} and \eqref{2D: order v_0} implies
\begin{equation}\label{z_t}
z_t  \leq (C_9 + C_{10})\epsilon^{1/4} + C_7 \epsilon^{1/2-a'}t^{1/2} + C_8 \int_0^t z_s^2ds
\end{equation}
Thus, $\forall t \leq \epsilon^{-b}$, there is $C_{11}>0$
\begin{equation}
z_t \leq C_{11}\epsilon^{1/4} + C_8 \int_0^t z_s^2 ds, \quad C_{11}:= C_7 + C_9+C_{10}
\end{equation}
Setting $T:= \inf \lbrace t \geq 0: z_t \geq 2C_{11}\epsilon^{1/4} \rbrace$
. We prove $T \geq \epsilon^{-b}$ by contradiction. If $T < \epsilon^{-b}$ then
\begin{equation}
\begin{aligned}
C_{11}\epsilon^{1/4} &\leq 4C_8 C_{11}^2\int_0^T \epsilon^{1/2}ds\\
C_{11}\epsilon^{1/4} &\leq  4 C_8 C_{11}^2 \epsilon^{1/2-b}  
\end{aligned}
\end{equation}
which is impossible as $b < 1/4$ (by \eqref{condition a',b,a'+b/2}).
Replace $t=\epsilon^{-b}$ into \eqref{esti z_t} and by \eqref{condition a',b,a'+b/2}, we get there is $C_{12}>0$
\begin{equation}
\begin{aligned}
z_{\epsilon^{-b}} \leq &C_1 e^{-\alpha \epsilon^{-b}} \norm{\hat{u}_0 + \hat{v}_0}_\infty + C_4 \epsilon^{-2\lambda \epsilon^{-b}}\norm{\hat{u}_0 - \hat{u}_0}_\infty \\
&+ C_7\epsilon^{1/2-a'-b/2} + 4C_8C_{11}^2\epsilon^{1/2-b} \leq C_{12}\epsilon^{1/2-\hat{a}}
\end{aligned}
\end{equation}
The proof of \eqref{4.18} is complete. In $|x-\xi_0| > 10^{-5}\zeta \epsilon^{-1}$, by symmetry, it is sufficient to consider interval
\begin{equation}\label{interval L-,L+}
I=(\xi_0 + 10^{-6}\zeta \epsilon^{-1},(1 +10^{-6}\zeta)\epsilon^{-1})
\end{equation}
By \eqref{1}, we get $\forall i=1,2$
\begin{equation}
\begin{aligned}
\sup_{x \in I} |m_{i,0}(x) - 1| &\leq \sup_{x \in I}|m_{i,0}(x) - \overline{m}_{\xi_0}(x)| + \sup_{x \in I}|\overline{m}_{\xi_0}(x) - 1| \\
&\leq 2C\epsilon^{1/4}
\end{aligned}
\end{equation}
We define $\hat{m}_{i,0}$ as
\begin{equation}
\hat{m}_{i,0}(x) = \left\{\begin{array}{l}
m_{i,0}(x) \quad \text{if} \quad x \in I \\
m_{i,0}\bigg( (1+10^{-6}\zeta)\epsilon^{-1} \bigg) \quad \quad \text{if} \quad x\geq (1 +10^{-6}\zeta)\epsilon^{-1}  \\
m_{i,0}(\xi_0 + 10^{-6}\zeta \epsilon^{-1}) \quad \text{if} \quad x  < \xi_0 + 10^{-6}\zeta \epsilon^{-1}   
\end{array}\right.
\end{equation}
Then by Proposition \ref{proposition of barrier lemma}, 
\begin{equation}
P\bigg(\sup_{t \leq \epsilon^{-b}}\sup_{x_0 + 10^{-5}\zeta \epsilon^{-1} \leq x \leq \epsilon^{-1}} |m_{i,t}(x) - \hat{m}_{i,t}(x)| \leq c_n\epsilon^n\; \forall i=1,2 \bigg) \geq 1 - c_n\epsilon^n
\end{equation}
We now estimate $\hat{m}_{i,t}$, set $\tilde{u}_t = \hat{m}_{1,t} - 1$ and $\tilde{v}_t = \hat{m}_{2,t}-1$ then
\begin{equation}
\begin{aligned}
&(\partial_t - \dfrac{1}{2} \partial_{xx})(\tilde{u}_t-\epsilon^{1/2}Z_t) = -2\tilde{u}_t-6(\tilde{u}_t^2 + \tilde{u}_t^3) + \lambda(\tilde{v}_t-\tilde{u}_t)\\
&(\partial_t - \dfrac{1}{2} \partial_{xx})(\tilde{v}_t-\epsilon^{1/2}Z_t) = -2\tilde{v}_t-6(\tilde{v}_t^2 + \tilde{v}_t^3) + \lambda(\tilde{u}_t-\tilde{v}_t)
\end{aligned}
\end{equation}
Rewrite into corresponding integral system to $(\tilde{u}_t,\tilde{v}_t)$
\begin{equation}\label{u,v outside}
\begin{aligned}
\tilde{u}_t = e^{-2 t}H_t \tilde{u}_0 &- 6\int_0^t e^{-2\lambda (t-s)}H_{t-s}(\tilde{u}_s^2 + \tilde{u}_s^3)ds\\
 &+ \lambda \int_0^t e^{-2\lambda (t-s)}H_{t-s}(\tilde{v}_s-\tilde{u}_s)ds + \epsilon^{1/2}Z'_{1,t} \\
\end{aligned}
\end{equation}
\begin{equation}
\begin{aligned}
\tilde{v}_t = e^{-2 t}H_t \tilde{v}_0 &- 6\int_0^t e^{-2\lambda (t-s)}H_{t-s}(\tilde{v}_s^2 + \tilde{v}_s^3)ds \\
&+ \lambda \int_0^t e^{-2\lambda (t-s)}H_{t-s}(\tilde{u}_s-\tilde{v}_s)ds + \epsilon^{1/2}Z'_{2,t}
\end{aligned}
\end{equation}
where
\begin{equation}
Z_{i,t}' = Z_{i,t} - \int_0^t e^{-2(t-s)}H_{t-s}Z_{i,s}ds \quad \forall i=1,2
\end{equation}
We can evaluate $\tilde{u}_t,\tilde{v}_t$ similar to previous case and we then obtain the same estimate we proved before, we omit the details. So, \eqref{partial sup} is thus proved with the sup over $\Omega_\epsilon$. We have  
\begin{equation}
\begin{aligned}
&\sup_{t \leq \epsilon^{-b}}\norm{m_{i,t}-\overline{m}_{\xi_0}}_\epsilon \leq 2C\epsilon^{1/4}\quad \forall i=1,2 \\
\Rightarrow  &\sup_{t \leq \epsilon^{-b}}\norm{m_{i,t} - \overline{m}_{x_0}}\leq  \sup_{t \leq \epsilon^{-b}}\norm{m_{i,t}-\overline{m}_{\xi_0}}_\epsilon + \norm{\overline{m}_{\xi_0} - \overline{m}_{x_0}}_\epsilon \leq (2C+c)\epsilon^{1/4}
\end{aligned}
\end{equation}
and
\begin{equation}
\begin{aligned}
&\norm{m_{i,\epsilon^{-b}} - \overline{m}_{\xi_0}}_\epsilon \leq 2 \epsilon^{1/2-\hat{a}} \quad \forall i=1,2\\
\Rightarrow &\norm{\dfrac{m_{1,\epsilon^{-b}}+{m_{2,\epsilon^{-b}}}}{2} - \overline{m}_{\xi_0}}_\epsilon \leq 2 \epsilon^{1/2-\hat{a}}\\
\Rightarrow &|\xi_{\epsilon^{-b}} - \xi_0| \leq 2c\epsilon^{1/2-\hat{a}}
\end{aligned}
\end{equation}
As a result,
\begin{equation}
\begin{aligned}
\norm{m_{i,\epsilon^{-b}} - \overline{m}_{\xi_{\epsilon^{-b}}}}_\epsilon &\leq \norm{m_{i,\epsilon^{-b}}-\overline{m}_{\xi_0}}_\epsilon + \norm{\overline{m}_{\xi_0} - \overline{m}_{\xi_{\epsilon^{-b}}}}_\epsilon \\
 &\leq (2+2c)\epsilon^{1/2-\hat{a}} \leq \epsilon^{1/2-a} \quad \forall i=1,2
\end{aligned}
\end{equation}
\end{proof}

\subsection{Proof of Proposition \ref{weak convergence}}
\begin{proof}
We define the process $Y_t$  as a descretization of the process $X_{t}^{\epsilon}$
\begin{equation}
\begin{aligned}
Y_t &= \xi_{t_n} - x_0 \quad \text{where}\quad  t_n \leq t < t_{n+1}, t_n = nT_\epsilon \\
T_\epsilon &= n_\epsilon \epsilon^{-b}, \quad n_\epsilon = [\epsilon^{-1/10+b}], \quad \epsilon^{-1/10}-\epsilon^{-b} \leq T_\epsilon \leq \epsilon^{-1/10}
\end{aligned}
\end{equation}
and let $Y_\tau^\epsilon = Y_{\epsilon^{-1}\tau}$, then recall that $Y_\tau^\epsilon = X_\tau^\epsilon$ when $\tau = \epsilon t_n$. Let $F_t$ be the $\sigma$-algebra generated by the process $Z_{1,s},Z_{2,s}\; \forall s\leq t$.
To complete the proof, it is enough to show that $Y_\tau^\epsilon \rightarrow X_\tau$, which is a Brownian motion staring from $0$ and variance $3/8$,  weakly in $D[0,T]$ as $\epsilon \rightarrow 0$. Calling $\mathbb{P}^\epsilon$ the law on $D(\mathbb{R}^+,\mathbb{R})$ of $Y_\tau^\epsilon$. Following sufficient conditions was proved in Prop 3.7,\cite{BraDP}. 
\begin{lemma}
Given any $T>0$, the family of laws $\mathbb{P}^\epsilon$, $\epsilon>0$, on $D([0,T],\mathbb{R})$, is tight if there is $c>0$ so that for all $\epsilon$
\begin{equation}\label{k1}
\sup_{t_n \leq \epsilon^{-1}T} \mathbb{E}^{\epsilon}(\gamma_i(t_n)^2) \leq c \quad \forall i=1,2
\end{equation}
where
\begin{equation}
\begin{aligned}
\gamma_1(t_n) &= \epsilon^{-1}T_{\epsilon}^{-1}\mathbb{E}^{\epsilon}(Y_{t_{n+1}} - Y_{t_n}| F_{t_n})\\
\gamma_2 (t_n) &= \epsilon^{-1}T_{\epsilon}^{-1}\times\\
& \bigg( \mathbb{E}^{\epsilon}(Y_{t_{n+1}}^2 -Y_{t_n}^2| F_{t_n}) - [Y_{t_n}+\mathbb{E}^{\epsilon}(Y_{t_{n+1}}|F_{t_n}) ]\mathbb{E}^{\epsilon}(Y_{t_{n+1}}-Y_{t_n}|F_{t_n})\bigg)
\end{aligned}
\end{equation}
If \eqref{k1} holds and if 
\begin{equation}\label{k2}
\lim_{\epsilon \rightarrow 0} \sup_{t_n \leq \epsilon^{-1}T}\epsilon^{-1}T_{\epsilon}^{-1}E^{\epsilon}\bigg( |Y_{t_{n+1}}-Y_{t_n}|^4\bigg) = 0
\end{equation}
then any limit point $\mathbb{P}$ of $\mathbb{P}^\epsilon$ is supported by $C([0,T],\mathbb{R})$.
Finally, if \eqref{k1} and \eqref{k2} hold and if
\begin{equation}
\lim_{\epsilon \rightarrow 0} \sup_{t_n \leq \epsilon^{-1}T} E^\epsilon (\gamma_1(t_n)) = 0
\end{equation}
and
\begin{equation}
\lim_{\epsilon \rightarrow 0} \sup_{t_n \leq \epsilon^{-1}T} E^\epsilon \bigg( \bigg|\dfrac{3}{8}-\epsilon^{-1}T_{\epsilon}^{-1}E^\epsilon( Y_{t_{n+1}}^2 -Y_{t_n}^2 |F_{t_n})\bigg| \bigg) = 0
\end{equation}
then any limit point $\mathbb{P}$ is equal to $P$, the law of the Brownian motion with diffusion $D$ that starts from 0.
\end{lemma}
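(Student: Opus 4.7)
The plan is to treat the embedded chain $(Y_{t_{n}})_{n}$ as a discrete-time Doob-decomposable process, verify Aldous' tightness criterion, and identify any weak limit via L\'evy's characterization. A short expansion of the definition shows
\begin{equation*}
\epsilon T_{\epsilon}\gamma_{1}(t_{n}) = \mathbb{E}[Y_{t_{n+1}}-Y_{t_{n}}\mid F_{t_{n}}],\qquad \epsilon T_{\epsilon}\gamma_{2}(t_{n}) = \mathrm{Var}(Y_{t_{n+1}}\mid F_{t_{n}}),
\end{equation*}
so writing $Y_{t_{n}} = M^{\epsilon}_{n}+A^{\epsilon}_{n}$ with predictable drift $A^{\epsilon}_{n}=\epsilon T_{\epsilon}\sum_{k<n}\gamma_{1}(t_{k})$ exhibits $M^{\epsilon}$ as a martingale with predictable quadratic variation $\langle M^{\epsilon}\rangle_{n}=\epsilon T_{\epsilon}\sum_{k<n}\gamma_{2}(t_{k})$. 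This decomposition is the scaffolding on which both tightness and the limit identification rest.

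For tightness I would verify Aldous' criterion. Given any $F$-stopping time $\sigma\leq\epsilon^{-1}T$ and any deterministic $h_{\epsilon}\downarrow 0$, split $Y^{\epsilon}_{\sigma+h_{\epsilon}}-Y^{\epsilon}_{\sigma}$ into its martingale and drift parts. Optional sampling gives $\mathbb{E}\bigl[(M^{\epsilon}_{\sigma+h_{\epsilon}}-M^{\epsilon}_{\sigma})^{2}\bigr]\leq h_{\epsilon}\sup_{n}\mathbb{E}\gamma_{2}(t_{n})$, while Cauchy-Schwarz gives $\mathbb{E}|A^{\epsilon}_{\sigma+h_{\epsilon}}-A^{\epsilon}_{\sigma}|\leq h_{\epsilon}\bigl(\sup_{n}\mathbb{E}\gamma_{1}(t_{n})^{2}\bigr)^{1/2}$; both vanish by (k1), and $Y^{\epsilon}_{0}=0$ supplies compact containment. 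Next, under (k2), Chebyshev on the fourth moment of the largest jump yields
\begin{equation*}
\mathbb{P}\Big(\max_{t_{n}\leq\epsilon^{-1}T}|Y_{t_{n+1}}-Y_{t_{n}}|>\delta\Big) \leq \delta^{-4}\sum_{n}\mathbb{E}|Y_{t_{n+1}}-Y_{t_{n}}|^{4} \leq \delta^{-4}\cdot\tfrac{T}{\epsilon T_{\epsilon}}\cdot o(\epsilon T_{\epsilon}) \to 0,
\end{equation*}
so any weak limit $\mathbb{P}$ is supported on $C([0,T],\mathbb{R})$.

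To identify $\mathbb{P}$ with $\sqrt{D}$-Brownian motion starting from $0$ (with $D=3/8$) I would apply L\'evy's theorem. The assumption $\sup_{n}\mathbb{E}^{\epsilon}\gamma_{1}(t_{n})\to 0$, together with the $L^{2}$ bound on $\gamma_{1}$ from (k1) that supplies uniform integrability of the Riemann-type sums, forces $A^{\epsilon}\to 0$ uniformly in $L^{1}$, so every weak limit is a continuous martingale. The last assumption controls $\epsilon^{-1}T_{\epsilon}^{-1}\mathbb{E}^{\epsilon}[Y_{t_{n+1}}^{2}-Y_{t_{n}}^{2}\mid F_{t_{n}}]$; expanding $Y_{t_{n+1}}^{2}-Y_{t_{n}}^{2}=2Y_{t_{n}}(Y_{t_{n+1}}-Y_{t_{n}})+(Y_{t_{n+1}}-Y_{t_{n}})^{2}$ and using the vanishing of $\gamma_{1}$, this reduces to $\langle M^{\epsilon}\rangle_{\tau}\to D\tau$ in $L^{1}$. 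A continuous martingale starting at $0$ with deterministic quadratic variation $D\tau$ is $\sqrt{D}B_{\tau}$, which concludes. I expect the main technical wrinkle to be the careful bookkeeping that passes from the grid chain $Y_{t_{n}}$ to the piecewise-constant process $Y^{\epsilon}$, absorbing both the incomplete final block of length $<\epsilon T_{\epsilon}$ and the $O(\epsilon T_{\epsilon}\gamma_{1}^{2})$ discrepancy between variance and raw second moment into asymptotically negligible remainders.
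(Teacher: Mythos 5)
First, a point of reference: the paper does not prove this lemma at all --- it is quoted as Prop.~3.7 of \cite{BraDP} --- so your sketch is being measured against the standard martingale-method argument behind that citation rather than against a proof in the text. Much of your scaffolding is right: the identities $\epsilon T_\epsilon\gamma_1(t_n)=\mathbb{E}(Y_{t_{n+1}}-Y_{t_n}\mid F_{t_n})$ and $\epsilon T_\epsilon\gamma_2(t_n)=\mathrm{Var}(Y_{t_{n+1}}\mid F_{t_n})$ are correct, the largest-jump/fourth-moment computation under \eqref{k2} is exactly how one shows limit points live in $C([0,T],\mathbb{R})$, and tightness can indeed be run through Aldous' criterion. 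Two of your tightness bounds are, however, not justified as written: $\mathbb{E}[(M^\epsilon_{\sigma+h_\epsilon}-M^\epsilon_\sigma)^2]\le h_\epsilon\sup_n\mathbb{E}\gamma_2(t_n)$ does not follow from optional sampling, because the grid indices falling in $[\sigma,\sigma+h_\epsilon)$ are random; with only $\sup_n\mathbb{E}\gamma_2(t_n)^2\le c$ one gets, by Cauchy--Schwarz over the grid, a bound of order $\sqrt{c\,T\,h_\epsilon}$ (plus an $o(1)$ term), which still suffices. Likewise ``$Y^\epsilon_0=0$ supplies compact containment'' is not a justification; the marginal tightness comes from $\mathbb{E}[(Y^\epsilon_\tau)^2]\le C(\tau+\tau^2)$, which \eqref{k1} gives through your Doob decomposition. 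These are repairable.

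The genuine gap is in the identification step. You claim that $\sup_{t_n}\mathbb{E}^\epsilon(\gamma_1(t_n))\to0$ together with the $L^2$ bound in \eqref{k1} ``forces $A^\epsilon\to0$ uniformly in $L^1$''. It does not: the hypothesis controls only the \emph{signed} mean of $\gamma_1$, and a mean-zero, $L^2$-bounded drift can be of order one. Concretely, the Euler chain $Y_{t_{n+1}}=Y_{t_n}-\kappa Y_{t_n}\,\epsilon T_\epsilon+\sqrt{(D+2\kappa Y_{t_n}^2)\,\epsilon T_\epsilon}\,\eta_n$, with $\eta_n$ i.i.d.\ standard Gaussians and $Y_0=0$, satisfies every displayed hypothesis: $\mathbb{E}\gamma_1(t_n)=-\kappa\,\mathbb{E}Y_{t_n}=0$ by symmetry, $\gamma_2(t_n)=D+2\kappa Y_{t_n}^2$ is bounded in $L^2$, \eqref{k2} holds, and $\epsilon^{-1}T_\epsilon^{-1}\mathbb{E}(Y_{t_{n+1}}^2-Y_{t_n}^2\mid F_{t_n})=D+O(\epsilon T_\epsilon)$ in $L^1$ because the drift contribution $2Y_{t_n}\gamma_1(t_n)$ is exactly cancelled by the inflated conditional variance; yet the weak limit solves $dX=-\kappa X\,d\tau+\sqrt{D+2\kappa X^2}\,dW$ and is not Brownian motion (its fourth moments differ). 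The example is not the process of this paper, but it shows that no argument can extract the martingale property of the limit from the hypotheses as literally written; what is needed --- and what Prop.~3.7 of \cite{BraDP} should be read as requiring, and what the paper's verification actually supplies via the restart estimate $|E^\epsilon(\overline\xi_{T_\epsilon}-x_0)|\le c_n\epsilon^n$, valid uniformly over the good configurations and hence controlling $|\gamma_1(t_n)|$ off a negligible event --- is smallness of $\mathbb{E}^\epsilon|\gamma_1(t_n)|$. With that strengthened reading your plan does close along standard lines: the drift vanishes in $L^1$, the last hypothesis makes $X_\tau^2-D\tau$ a martingale for any limit point, and L\'evy's theorem identifies the limit as $\sqrt{D}B_\tau$ with $D=3/8$.
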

From \eqref{eqn 3.13} and \eqref{eqn 3.14}, we get
\begin{equation}\label{2D: noname}
\begin{aligned}
P^{\epsilon} \bigg( |\xi_{t_n}| \leq (1-\zeta/2)\epsilon^{-1}; \; \norm{m_{i,t_n} - \overline{m}_{\xi_{t_n}}}_{\epsilon} \leq \epsilon^{1/2-a} \; &\forall t_n \leq \epsilon^{-1}T, i=1,2 \bigg)\\
&\geq 1-c_n\epsilon^n
\end{aligned}
\end{equation}
Therefore, by using Markov property of $\xi_t$ and \eqref{2D: noname}, it suffices to check above criteria for process $(m_{1,t},m_{2,t})$ starting from $(m_{1,0},m_{2,0})$ and initial datum satisfies $\norm{m_{i,0}-\overline{m}_{x_0}}_\epsilon \leq \epsilon^{1/2-a}\; \forall i=1,2$ and $|x_0| \leq (1-\zeta/2)\epsilon^{-1}$. More explicitly, we need to prove
\begin{equation}\label{2d n1}
\mathbb{E}^{\epsilon} \bigg( \gamma_i(T_\epsilon)^2 \bigg) \leq c \quad \forall i=1,2
\end{equation} 
and
\begin{equation}\label{2d n2}
\lim_{\epsilon \rightarrow 0}\mathbb{E}^{\epsilon}\bigg( (\epsilon T_\epsilon)^{-1}(\xi_{T_\epsilon}-\xi_0)^4 + \gamma_1(T_\epsilon) + \bigg| \dfrac{3}{8}-(\epsilon T_\epsilon)^{-1}(\xi_{T_\epsilon}- \xi_0)^2 \bigg| \bigg) = 0
\end{equation}
The idea to confirm \eqref{2d n1} and \eqref{2d n2} is to work with process $(\overline{m}_{1,t},\overline{m}_{2,t})$ starting from $\overline{m}_{\xi_0}$ and $\xi_0 = \xi \bigg(\dfrac{m_{1,0} + m_{2,0}}{2} \bigg)$. Indeed, we are able to show that with "large probability", the distance $m_{i,T_{\epsilon}} - \overline{m}_{i,T_\epsilon}$ has order $O(\epsilon^{1-2a})$. As a result, by \eqref{first property of center}, with "large probability", the distance between two centers $\xi_{T_\epsilon} - \overline{\xi}_{T_\epsilon},\;\overline{\xi}_{T_\epsilon}:= \xi\bigg( \dfrac{\overline{m}_{1,T_\epsilon}+\overline{m}_{2,T_\epsilon}}{2}\bigg)$, has also order $O(\epsilon^{1-2a})$. Formal statement is given in Lemma \ref{lemma 1} below. Thus, we can prove \eqref{2d n1},\eqref{2d n2} in terms $"\overline{\gamma}_i(T_\epsilon),\overline{\xi}_{T_\epsilon}"$ instead of $"\gamma_i(T_\epsilon),\xi_{T_\epsilon}"$, where $\overline{\gamma}_i(T_\epsilon)$ is the variable $\gamma_i(T_\epsilon)$ in the new process starts from an instanton centered at $x_0$ with $|x_0| \leq (1-\zeta/2)\epsilon^{-1}$. 
Specifically, we have to prove
\begin{equation}\label{eqn 1}
 \mathbb{E}^\epsilon (\overline{\gamma}_1^{2}(T_\epsilon)) \leq c 
\end{equation}
\begin{equation}\label{eqn 2}
 \mathbb{E}^\epsilon (\overline{\gamma}_2^2(T_\epsilon)) \leq c \quad 
\end{equation}
\begin{equation}\label{eqn 3}
\lim_{\epsilon \rightarrow 0} (\epsilon T_\epsilon)^{-1} E^\epsilon((\overline{\xi}_{T_\epsilon} - x_0)^4) = 0
\end{equation}
\begin{equation}\label{eqn 4}
\lim_{\epsilon \rightarrow 0} E^\epsilon (\overline{\gamma}_1(T_\epsilon)) = 0
\end{equation}
and
\begin{equation}\label{eqn 5}
\lim_{\epsilon \rightarrow 0} E^\epsilon \bigg( |\dfrac{3}{8} - (\epsilon T_\epsilon)^{-1}|\overline{\xi}_{T_\epsilon}-x_0|^2| \bigg) = 0
\end{equation}
These conditions will hold if two following properties are claimed (proof given in Lemma \ref{lemma 2})
\begin{equation} \label{eqn 11}
|E(\overline{\xi}_{T_\epsilon}- x_0)| \leq c_n \epsilon^n
\end{equation}
and $\forall p > 0$, there is $C_p > 0$ such that
\begin{equation} \label{eqn 22}
E(|\overline{\xi}_{T_\epsilon}-x_0|^p) \leq C_p(\epsilon T_\epsilon)^{p/2}
\end{equation}
In fact, \eqref{eqn 1} and \eqref{eqn 4} follow from \eqref{eqn 11} since
\begin{equation}
\begin{aligned}
&\mathbb{E}^{\epsilon}(\overline{\gamma}_1(T_\epsilon))= (\epsilon T_\epsilon)^{-1} \mathbb{E}^{\epsilon}(\overline{\xi}_{T_\epsilon}-x_0) \\
&\mathbb{E}^{\epsilon}(\overline{\gamma}_1^2(T_\epsilon)) =  (\epsilon T_\epsilon)^{-2} \bigg( \mathbb{E}^\epsilon (\overline{\xi}_{T_\epsilon}-x_0) \bigg)^2 
\end{aligned}
\end{equation}
Next, \eqref{eqn 2} follows from \eqref{eqn 11} and let $p=2$ in \eqref{eqn 22}
\begin{equation}
\mathbb{E}^{\epsilon}(\overline{\gamma}_2^2(T_\epsilon)) = (\epsilon T_\epsilon)^{-2} \bigg( \mathbb{E}^\epsilon[(\overline{\xi}_{T_\epsilon}-x_0)^2] - (\mathbb{E}^\epsilon(\overline{\xi}_{T_\epsilon}-x_0) )^2 \bigg)^2
\end{equation}
On the other hand, \eqref{eqn 3} follows from \eqref{eqn 22} by choosing $p=4$. Finally, \eqref{eqn 5} follows from \eqref{2d ok} in Lemma \ref{lemma 2}. Therefore, it suffices to give proofs of Lemma \ref{lemma 1} and \ref{lemma 2}.
\begin{lemma}\label{lemma 1}\label{difference between two center}
For any $a,\zeta$ and $\epsilon$ positive, let $|x_0| \leq (1-\zeta/2)\epsilon^{-1}$ and assume $\norm{m_{i,0} - \overline{m}_{x_0}} \leq \epsilon^{1/2-a} \quad \forall i=1,2$ and suppose that $m_{1,0}$ and $m_{2,0}$ satisfy N.b.c in $\Omega_\epsilon$ and $\xi\bigg( \dfrac{m_{1,0}+m_{2,0}}{2} \bigg)$ $= x_0$. Let $m_{i,t}$ and $\overline{m}_{i,t}$ be the solutions of \eqref{eqn: integral_on_R_1}with initial datum respectively $m_{i,0}$ and $\overline{m}_{x_0}$. Then there is $c$ and for any $n, c_n$ so that 
\begin{equation}
P^{\epsilon}\bigg( \norm{m_{i,T_{\epsilon}} - \overline{m}_{i,T_\epsilon}}_\epsilon \leq c'\epsilon^{1-2a} \quad \forall i=1,2\bigg) \geq 1 - c_n\epsilon^n
\end{equation}
in addition, by \eqref{3.7} this follows that there exists $c_1 > 0$
\begin{equation}
P^{\epsilon} \bigg( |\xi_{T_\epsilon} - \overline{\xi}_{T_\epsilon}| \leq c_1\epsilon^{1-2a}\bigg) \geq 1 - c_n \epsilon^n
\end{equation}
\begin{proof}
Setting 
\begin{equation}
v_{i,t} := m_{i,t} -\overline{m}_{x_0}, \quad w_{i,t} := \overline{m}_{i,t} -\overline{m}_{x_0}, \quad u_{i,t} = v_{i,t} - w_{i,t} \quad \forall i =1,2
\end{equation} 
Then following \eqref{eqn: uu}, $v_1$  and $v_2$ satisfy the following system
\begin{equation}
\begin{aligned}
v_{1,t} &= g_{t,x_0}v_{1,0} - \int_0^t g_{t-s,x_0}\bigg( 3\overline{m}_{x_0}v_{1,s}^2  + v_{1,s}^3 + \lambda(v_{1,s}-v_{2,s})\bigg)ds + \sqrt{\epsilon}\hat{Z}_{1,t,x_0} \\
v_{2,t} &= g_{t,x_0}v_{2,0} - \int_0^t g_{t-s,x_0}\bigg( 3\overline{m}_{x_0}v_{2,s}^2  + v_{2,s}^3 + \lambda(v_{2,s}-v_{1,s})\bigg)ds + \sqrt{\epsilon}\hat{Z}_{2,t,x_0} \\
\end{aligned}
\end{equation}
Similarly, $w_1$ and $w_2$ satisfy the following system
\begin{equation}
\begin{aligned}
w_{1,t} &=  - \int_0^t g_{t-s,x_0}\bigg( 3\overline{m}_{x_0}w_{1,s}^2  + w_{1,s}^3 + \lambda(w_{1,s}-w_{2,s})\bigg)ds + \sqrt{\epsilon}\hat{Z}_{1,t,x_0} \\
w_{2,t} &=  - \int_0^t g_{t-s,x_0}\bigg( 3\overline{m}_{x_0}w_{2,s}^2  + w_{2,s}^3 + \lambda(w_{2,s}-w_{1,s})\bigg)ds + \sqrt{\epsilon}\hat{Z}_{2,t,x_0} \\
\end{aligned}
\end{equation}
It is easy to derive that $u_1$ and $u_2$ are solution of the following system
\begin{equation} \label{1D: u1, u2}
\begin{aligned}
u_{1,t} &= g_{t,x_1}u_{1,0} - \int_0^t g_{t-s,x_0}h_{1,s}u_{1,s}ds + \lambda\int_0^t g_{t-s,x_0}(u_{2,s}-u_{1,s})ds - D_{1,t}\\
u_{2,t} &= g_{t,x_0}u_{2,0} - \int_0^t g_{t-s,x_0}h_{2,s}u_{2,s}ds + \lambda\int_0^t g_{t-s,x_0}(u_{1,s}-u_{2,s})ds - D_{2,t}
\end{aligned}
\end{equation}
here
\begin{equation}
\begin{aligned}
D_{1,t} &:= \int_0^t g_{t-s,x_0}\bigg( 3u_{1,s}^2w_{1,s}+u_{1,s}^3 \bigg)ds, \quad  h_{1,t}:=-3\overline{m}_{x_0}(v_{1,t}+w_{1,t})-3w_{1,t}^2 \\
D_{2,t} &:= \int_0^t g_{t-s,x_0}\bigg( 3u_{2,s}^2w_{2,s}+u_{2,s}^3 \bigg)ds, \quad  h_{2,t}:=-3\overline{m}_{x_0}(v_{1,2}+w_{2,t})-3w_{2,t}^2
\end{aligned}
\end{equation}
Firstly, we prove that there is $c>0$
\begin{equation}\label{uniform bound m(i,t)-m_x0}
P^{\epsilon}\bigg( \norm{m_{i,t}- \overline{m}_{x_0}}_{\epsilon} \leq c\epsilon^{1/2-a}t^{1/2} \quad \forall t\leq \epsilon^{-1/10} \quad \forall i=1,2\bigg) \geq 1 -c_n\epsilon^n
\end{equation}
Consider $\hat{m}_{i,0}, \hat{m}_{i,t}$ and $\hat{u}_t, \hat{v}_t$ as in the proof of  Proposition \ref{prop 3.4}. 
Let $z_t := \norm{\hat{u}_t}_\infty + \norm{\hat{v}_t}_\infty$. Rewrite \eqref{estimation of z_t}
\begin{equation}
z_t \leq 2(C_1+C_4)(\norm{\hat{u}_0}_\infty + \norm{\hat{v}_0}_\infty) + C_7\epsilon^{1/2-a'}t^{1/2} + C_8 \int_0^t z_s^2 ds
\end{equation} 
Furthermore, by arguing as in \eqref{2D: order u_0}, we can prove that there are $c_1,c_2>0$
\begin{equation}
\norm{\hat{u}_0}_\infty \leq \dfrac{c_1}{2(C_1 + C_4)}\epsilon^{1/2-a}, \quad \norm{\hat{v}_0}_\infty \leq \dfrac{c_2}{2(C_1 + C_4)}\epsilon^{1/2-a}
\end{equation}
We have
\begin{equation}
z_t \leq c_3 \epsilon^{1/2-a} + C_7 \epsilon^{1/2-a'}t^{1/2} + C_8 \int_0^t z_s^2 ds, \quad c_3 := c_1 +  c_2
\end{equation}
and set 
\begin{equation}
\begin{aligned}
\gamma_t &:= 3\max{\bigg( c_3\epsilon^{1/2-a}, C_7\epsilon^{1/2-a'}t^{1/2}}\bigg) \\
T &:= \inf \lbrace{ t \geq 0: z_t \geq \gamma_t \rbrace}
\end{aligned}
\end{equation} 
We will prove that $T \geq \epsilon^{-1/10}$ by contradiction. If $T < \epsilon^{-1/10}$ then
\begin{equation}
\begin{aligned}
\gamma_T &\leq \dfrac{2}{3}\gamma_T  + C_8\int_0^T \gamma_s^2 ds\\
\gamma_T &\leq \dfrac{2}{3}\gamma_T  + C_8 \gamma_T^2 T\\
\Rightarrow 1/3 &\leq  C_8 \gamma_T T
\end{aligned}
\end{equation}
,which is impossible since
\begin{equation}
T\gamma_T \leq \epsilon^{1/2-a-3/20} < 1/3 \quad (\text{for }a<7/20)
\end{equation}
so $z_t \leq \gamma_t \leq 3C_7\epsilon^{1/2-a}t^{1/2} \quad \forall t \leq \epsilon^{-1/10}$. Recalling \eqref{4.18} and \eqref{partial sup}, we see that \eqref{uniform bound m(i,t)-m_x0} holds for the sup taken over $|x-x_0 | \leq 10^{-5}\zeta \epsilon^{-1}$. When $|x-x_0|>10^{-5}\zeta \epsilon^{-1}$, we do as in \eqref{interval L-,L+}-\eqref{u,v outside} and \eqref{uniform bound m(i,t)-m_x0} is proved. We omit the proof. Similarly, we also have the same bound for $\overline{m}_{i,t} - \overline{m}_{x_0}$ and we then obtain that there is $c>0$ such that
\begin{equation}\label{v,w,u}
\begin{aligned}
P^{\epsilon}\Big( \sum_{i=1,2}\norm{v_{i,t}}_\epsilon + \norm{w_{i,t}}_\epsilon + \norm{h_{i,t}}_\epsilon + \norm{u_{i,t}}_\epsilon \leq c\epsilon^{1/2-a}t^{1/2} \quad & \text{for all} \;t \leq \epsilon^{-1/10} \Big)\\
& \geq 1 - c_n\epsilon^n
\end{aligned}
\end{equation}
Let $U_t = u_{1,t} -u_{2,t}$ and we now prove that there is $\tilde{C}_3>0$ such that
\begin{equation}\label{bound u_1 - u_2}
\norm{U_t}_\epsilon \leq \tilde{C}_3 \epsilon^{1-2a}t^2 \quad \forall t \leq \epsilon^{-1/10}
\end{equation}
By \eqref{1D: u1, u2}, we have
\begin{equation}\label{U_t,A_t}
\begin{aligned}
U_t &= g_{t,x_0}U_0 -2\lambda \int_0^t g_{t-s,x_0}U_s ds + A_t
\end{aligned}
\end{equation}
where
\begin{equation}\label{bound A}
A_t = -\int_0^t g_{t-s,x_0}h_{1,s}u_{1,s}ds + \int_0^t g_{t-s,x_0}h_{2,s}u_{2,s}ds - D_{1,t}+D_{2,t}
\end{equation}
Since
\begin{equation} \label{2D: h_iu_i}
\bigg| \int_0^t g_{t-s,x_0}h_{i,s}u_{i,s}(x)ds \bigg|  \leq \kappa  t(c \epsilon^{1/2-a}t^{1/2})^2 \leq \tilde{C}_1 \epsilon^{1-2a}t^2 \quad \forall t \leq \epsilon^{-1/10}
\end{equation}
here $\kappa$ is in Lemma \ref{asymptotic of operator g}, and 
\begin{equation}
\begin{aligned} \label{2D: D_{i,t}}
|D_{i,t}(x)| &\leq 3 \bigg| \int_0^t g_{t-s,x_0}(u_{i,s}^2 w_{i,s})(x)ds \bigg| + \bigg| \int_0^t g_{t-s,x_0}u_{i,s}^3(x)ds \bigg| \\
&\leq 4t\kappa (\epsilon^{1/2-a}t^{1/2})^3 \leq \tilde{C}_2 \epsilon^{1-2a}t^2 \quad \forall t \leq \epsilon^{-1/10}
\end{aligned}
\end{equation}
by \eqref{v,w,u}, there is $\tilde{C}_3 > 0$
\begin{equation}\label{2D: A_t bd}
\norm{A_t}_\epsilon \leq \tilde{C}_3 \epsilon^{1-2a}t^2  \quad \forall t \leq \epsilon^{-1/10}
\end{equation}
$U_t$ is expressed in terms $A_t$, by \eqref{U_t,A_t},
\begin{equation}\label{bound U}
\begin{aligned}
&U_t = e^{-2\lambda t}g_{t,x_0} U_0 + A_t - 2\lambda \int_0^t e^{-2\lambda(t-s)}g_{t-s,x_0}A_s ds \\
\Rightarrow &\norm{U_t}_\epsilon \leq ce^{-2\lambda t}\norm{U_0}_\epsilon + C_4 \epsilon^{1-2a}t^2 \; (\eqref{2D: A_t bd})
\end{aligned}
\end{equation}
Thus, there is $\tilde{C}_5>0$
\begin{equation}
\norm{U_t}_\epsilon \leq \tilde{C}_5\epsilon^{1-2a}t^2 \quad \forall t \leq \epsilon^{-1/10}
\end{equation}
We now prove
\begin{equation}\label{bound u_1 + u_2}
\norm{u_{1,t} +u_{2,t}}_\epsilon \leq \tilde{C}_6 \epsilon^{1-2a}t^2 \quad \forall t \leq \epsilon^{-1/10}
\end{equation}
In fact, by \eqref{1D: u1, u2}
\begin{equation} \label{expression u_1,t + u_2,t}
\begin{aligned}
u_{1,t}+ u_{2,t} = g_{t,x_0}(u_{1,0} + u_{2,0})-\int_0^t g_{t-s,x_0}h_{1,s}u_{1,s}ds - &\int_0^t g_{t-s,x_0}h_{2,s}u_{2,s}ds\\
& - D_{1,t}-D_{2,t}
\end{aligned} 
\end{equation}
Since $x_0$ is the center of $u_{1,0} + u_{2,0}$ and \eqref{2D: h_iu_i},\eqref{2D: D_{i,t}}, there is $c^*>0$ such that
\begin{equation}
\norm{u_{1,t} + u_{2,t}}_\epsilon \leq c^*e^{-\alpha t}\norm{u_{1,0}+u_{2,0}}_\epsilon + (\tilde{C}_1+\tilde{C}_2) \epsilon^{1-2a}t^2 \leq \tilde{C}_6 \epsilon^{1-2a}t^2
\end{equation}
Since \eqref{bound u_1 - u_2} and \eqref{bound u_1 + u_2}, there is $\tilde{C}_7 > 0$ such that $\forall i=1,2$
\begin{equation}
\norm{u_{i,t}} \leq \tilde{C}_7 \epsilon^{1-2a}t^2 \quad \text{for all} \quad t \leq \epsilon^{-1/10}
\end{equation}
Since $\norm{h_{i,t}}_\infty \leq  c \epsilon^{1/2-a}\sqrt{t}$, it follows that
\begin{equation}\label{bound h*u}
\bigg| \int_0^t g_{t-s,x_0}h_{i,s}u_{i,s}(x)ds \bigg|  \leq \kappa t c\epsilon^{1/2-a}t^{1/2}\tilde{C}_7 \epsilon^{1-2a}t^2 \leq \tilde{C}_8 \epsilon^{1-2a} \quad \forall t \leq \epsilon^{-1/10}
\end{equation}
and
\begin{equation}\label{bound D_{i,t}}
\begin{aligned}
|D_{i,t}(x)| &\leq 3 \bigg| \int_0^t g_{t-s,x_0}(u_{i,s}^2 w_{i,s})(x)ds \bigg| + \bigg| \int_0^t g_{t-s,x_0}u_{i,s}^3(x)ds \bigg| \\
&\leq 3 \kappa t (C_7 \epsilon^{1-2a}t^2)^2 (c\epsilon^{1/2}t^{1/2}) + \kappa t (\tilde{C}_7 \epsilon^{1-2a}t^2)^3 \\
&\leq \tilde{C}_9 \epsilon^{1-2a} \quad \forall t\leq \epsilon^{-1/10}
\end{aligned} 
\end{equation}
Since \eqref{bound h*u}  and \eqref{bound A}, we get
\begin{equation}\label{2D A_t last}
\norm{A_t}_\epsilon \leq  \tilde{C}_{10} \epsilon^{1-2a} \quad \forall t \leq \epsilon^{-1/10}
\end{equation}
and from \eqref{bound U},\eqref{2D A_t last}
\begin{equation}\label{1d U_t exponentially bounded}
\norm{u_{1,t} - u_{2,t}}_\infty = \norm{U_t}_\infty \leq e^{-2\lambda t}\norm{u_{1,0}-u_{2,0}}_\infty + \tilde{C}_{11}\epsilon^{1-2a} \quad \forall t \leq \epsilon^{-1/10}
\end{equation}
Similarly, since \eqref{expression u_1,t + u_2,t} and \eqref{bound h*u} and \eqref{bound D_{i,t}}
\begin{equation}\label{1d u_t exponentially bounded}
\norm{u_{1,t} + u_{2,t}}_\infty \leq e^{-\alpha t}\norm{u_{1,0}+u_{2,0}}_\infty + \tilde{C}_{12} \epsilon^{1-2a} \quad \forall t \leq \epsilon^{-1/10}
\end{equation}
Finally, \eqref{1d U_t exponentially bounded} and \eqref{1d u_t exponentially bounded} implies the result. 
\end{proof}
\end{lemma}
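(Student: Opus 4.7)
The plan is to exploit the coupling trick: since both processes $(m_{1,t},m_{2,t})$ and $(\overline{m}_{1,t},\overline{m}_{2,t})$ are driven by the \emph{same} Brownian noises $Z_{1,\cdot},Z_{2,\cdot}$, the difference $u_i := m_i - \overline{m}_i$ satisfies an integral equation in which the stochastic term cancels entirely. Writing $v_i := m_i - \overline{m}_{x_0}$ and $w_i := \overline{m}_i - \overline{m}_{x_0}$ and linearising around $\overline{m}_{x_0}$ via Proposition \ref{prop : 2D linearized sys}, one obtains for $u_i = v_i - w_i$ a purely deterministic-looking (noise-free) system driven by $g_{t,x_0}$ with nonlinear source terms that are quadratic/cubic in $(u_i, v_i, w_i)$ and a linear coupling $\lambda(u_j - u_i)$.

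The first technical step is to establish a priori bounds $\norm{v_i}_\epsilon, \norm{w_i}_\epsilon \leq c\,\epsilon^{1/2-a}t^{1/2}$ for all $t \le \epsilon^{-1/10}$ with probability $1-c_n\epsilon^n$. This is done by a direct rerun of the argument in Proposition \ref{prop 3.4}: apply the barrier lemma to reduce to initial data localised near $x_0$, use the sum/difference decomposition combined with the spectral gap of $g_{t,x_0}$ (Proposition \ref{1 asymptotic of operator g}) and the noise bound \eqref{2 bound noise}, and close via a Gronwall-type continuity argument. This step also gives control on $h_{i,t} := -3\overline{m}_{x_0}(v_i+w_i) - 3 w_i^2$ of the same order.

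The second and crucial step is to diagonalise the system for $u_i$ into $S_t := u_{1,t}+u_{2,t}$ and $D_t := u_{1,t}-u_{2,t}$: the coupling cancels in $S_t$ and becomes an extra damping $-2\lambda D_t$ in $D_t$. By hypothesis, $\xi\bigl(\tfrac{m_{1,0}+m_{2,0}}{2}\bigr) = x_0$, so $S_0 = u_{1,0}+u_{2,0}$ is $L^2$-orthogonal to $\overline{m}_{x_0}'$; therefore $g_{t,x_0}S_0$ decays exponentially in sup norm by Proposition \ref{1 asymptotic of operator g}, and $g_{t,x_0}D_0$ picks up an additional factor $e^{-2\lambda t}$ via the Duhamel reformulation used in Corollary \ref{corro 2D}. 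Inserting the a priori bounds from the first step into the nonlinear source terms $3u_i^2 w_i + u_i^3$ and into $h_{i,s}u_{i,s}$ yields, after one integration, crude contributions of order $\epsilon^{1-2a}t^2$ for $t \leq \epsilon^{-1/10}$.

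Finally, I would bootstrap: plug this crude $\epsilon^{1-2a}t^2$ bound for $u_i$ back into the nonlinear terms, where now one factor carries the improved size. The new contributions become uniformly of order $\epsilon^{1-2a}$ in $t \leq \epsilon^{-1/10}$, and combined with the exponential decay on $S_t$ and $D_t$ (which kills the initial data up to a term of this same order) they close the estimate $\norm{u_{i,T_\epsilon}}_\epsilon \leq c'\epsilon^{1-2a}$. The second conclusion on $|\xi_{T_\epsilon}-\overline{\xi}_{T_\epsilon}|$ is then immediate from property \eqref{3.7} of Proposition \ref{propositions for center} applied to $\tfrac{m_{1,T_\epsilon}+m_{2,T_\epsilon}}{2}$ and $\tfrac{\overline{m}_{1,T_\epsilon}+\overline{m}_{2,T_\epsilon}}{2}$. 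The main obstacle is the bootstrap step: one must verify that the exponent $1-2a$ and the time window $\epsilon^{-1/10}$ are jointly compatible with $a \leq 1/4$ so that $T_\epsilon \cdot \epsilon^{1/2-a} \ll 1$ makes the iteration close, which is exactly why the choice $b<1/10$ and $a \leq 1/4$ appears in the earlier propositions.
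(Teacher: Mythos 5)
Your proposal is correct and follows essentially the same route as the paper: cancel the noise by coupling with the same $Z_{i,t}$, obtain the a priori $\epsilon^{1/2-a}t^{1/2}$ bounds on $v_i,w_i,h_i$ by rerunning the Proposition \ref{prop 3.4} machinery, diagonalise $u_1\pm u_2$ using the center condition on the sum and the $e^{-2\lambda t}$ damping on the difference to get the crude $\epsilon^{1-2a}t^2$ bound, bootstrap to the uniform $\epsilon^{1-2a}$ bound at $T_\epsilon$, and conclude via \eqref{3.7}. The compatibility-of-exponents caveat you flag at the end is the same bookkeeping the paper handles (implicitly) through its choice of $a$, $a'$ and $b$, so there is no substantive difference.
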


\begin{lemma} \label{lemma 2}
Given any $\zeta > 0$, for any $\epsilon > 0$, $|x_0| \leq (1-\zeta/2)\epsilon^{-1}$, and let $(\overline{m}_{1,t},\overline{m}_{2,t})$ be the process that starts from $\overline{m}_{x_0}$. Setting
\begin{equation}
\overline{\xi}_{T_\epsilon} =  \xi\bigg(\dfrac{\overline{m}_{1,T_{\epsilon}} + \overline{m}_{2,T_{\epsilon}}}{2} \bigg) 
\end{equation}
Then for any $n$ there is $c_n$ so that
\begin{equation}
|E^{\epsilon}(\overline{\xi}_{T_\epsilon} - x_0)| \leq c_n \epsilon^n
\end{equation}
and for any positive $p$ there is $C_p$ so that
\begin{equation}\label{1D: estimate center p/2}
E^{\epsilon}(|\overline{\xi}_{T_\epsilon} - x_0|^p) \leq C_p(\epsilon T_\epsilon)^{p/2}
\end{equation}
\begin{proof}
We shall see that the symmetry of potential is crucial to the proof. Fix $\epsilon > 0$ and $x_0$, we define
\begin{equation}
\tilde{H}_t (x,y) = \left\{\begin{array}{l}
H_t(x,y) \quad \text{if} \quad |y-x_0| \leq 10^{-4}\zeta \epsilon^{-1} \\
0 \quad \quad \text{otherwise}\\  
\end{array}\right.
\end{equation}
and
\begin{equation}
\tilde{Z}_{i,t}(x) = \int_0^t \int_{\mathbb{R}}\tilde{H}_{t-s}(x,y)\alpha_i(y,s)dyds
\end{equation}
and let $(\tilde{m}_{1,t},\tilde{m}_{2,t})$ satisfies the following SPDE
\begin{equation}
\begin{aligned}
\tilde{m}_{1,t} &= \tilde{H}_t \overline{m}_{x_0} + \int_0^t \tilde{H}_{t-s}\bigg[ -V'(\tilde{m}_{1,s}) + \lambda(\tilde{m}_{2,s}-\tilde{m}_{1,s})\bigg]ds + \epsilon^{1/2}\tilde{Z}_{1,t} \\
\tilde{m}_{2,t} &= \tilde{H}_t \overline{m}_{x_0} + \int_0^t \tilde{H}_{t-s}\bigg[ -V'(\tilde{m}_{2,s}) + \lambda(\tilde{m}_{1,s}-\tilde{m}_{2,s})\bigg]ds + \epsilon^{1/2}\tilde{Z}_{2,t} 
\end{aligned}
\end{equation}
Let $G$ be the map from $C^0(\mathbb{R}) \rightarrow C^0(\mathbb{R})$
\begin{equation}
(Gf)(x) = -f(2x_0 - x)
\end{equation}
Then the process $\tilde{m}_{i,t}$ and $G\tilde{m}_{i,t}$ have the same distribution since the processes $\tilde{Z}_{i,t}$ and $G\tilde{Z}_{i,t}$ are equal in distribution. Thus, 
\begin{equation}
E^{\epsilon} (\tilde{\xi}_{T_\epsilon} -x_0) = 0, \quad  \tilde{\xi}_{T_\epsilon} := \xi \bigg( \dfrac{\tilde{m}_{1,T_{\epsilon}}+\tilde{m}_{2,T_{\epsilon}}}{2} \bigg)
\end{equation}

There is $\delta > 0$ and for any $n, c_n$ so that
\begin{equation}\label{2D distance noise}
P^{\epsilon} \bigg( \sup_{t \leq T_\epsilon} \sup_{|x-x_0| \leq 10^{-5}\zeta \epsilon^{-1}}|Z_{i,t}(x) - \tilde{Z}_{i,t}(x)| \leq e^{-\delta \epsilon^{-1}} \quad \forall i=1,2\bigg) \geq 1 - c_n \epsilon^n
\end{equation}
In the set lying in LHS of \eqref{2D distance noise}, there is $\delta_1 > 0$ 
\begin{equation}
\sup_{t \leq T_{\epsilon}} \sup_{|x-x_0| \leq 10^{-6}\zeta \epsilon^{-1}} |\overline{m}_{i,t}(x) -\tilde{m}_{i,t}(x)| \leq e^{-\delta_1 \epsilon^{-1}}\quad \forall i=1,2
\end{equation}
and by \eqref{3.7}, there is $\delta_2 > 0$
\begin{equation}
\begin{aligned}
|\overline{\xi}_{T_\epsilon} - \tilde{\xi}_{T_\epsilon}| & \leq \int_{\mathbb{R}}\overline{m}_{x_0}' (x)\bigg|\dfrac{\sum_{i=1,2} m_{i,T_\epsilon} (x)}{2}-  \dfrac{\sum_{i=1,2} \tilde{m}_{i,T_\epsilon}(x)}{2}\bigg|dx\\
& \leq \int_{|x-x_0| \leq 10^{-6}\zeta \epsilon^{-1}} \overline{m}_{x_0}'(x)e^{-\delta_1 \epsilon^{-1}}dx + 4\int_{|x-x_0| \geq 10^{-6}\zeta \epsilon^{-1}}\overline{m}_{x_0}'(x)dx \\
&\leq e^{-\delta_2 \epsilon^{-1}}
\end{aligned}
\end{equation}
Thus, for any $n$ there is $c_n$
\begin{equation}
|E^{\epsilon}(\overline{\xi}_{T_\epsilon}-x_0)|  = |E(\overline{\xi}_{T_\epsilon} - \tilde{\xi}_{T_\epsilon})|\leq c_n \epsilon^n
\end{equation}
 Setting 
\begin{equation}
v_t = \overline{m}_{2,t} - \overline{m}_{x_0},\quad u_t = \overline{m}_{1,t} - \overline{m}_{x_0}, u_0 = v_0 = 0
\end{equation}
then $(u_t,v_t)$ satisfies \eqref{prop : 2D linearized sys} with $u_0 =0 , v_0 = 0$ and suppose that $W_{i,t,x_0} \in G_\epsilon(a,x_0)$, i.e, $\norm{W_{i,t,x_0}} \leq \epsilon^{1/2-a}t^{1/2}$. We get, by \eqref{eqn: uu} and \eqref{uniform bound m(i,t)-m_x0}
\begin{equation}
 \norm{u_t + v_t - \Big( W_{1,t,x_0} +W_{2,t,x_0}\Big)}_\infty  \leq c_1 \epsilon^{1-2a}T_\epsilon^2 \quad \forall t \leq T_\epsilon
\end{equation}
and from Prop \ref{prop noise}, we have $W_{i,t,x_0} = B_{i,t,x_0}\tilde{m}_{x_0}' + R_{i,t,x_0} = \sqrt{D} B_{i,t}\overline{m}_{x_0}' + R_{i,t,x_0}, \; D= 3/4,\quad B_{t,x_0} : = B_{1,t,x_0}+ B_{2,t,x_0} $ and it follows that
\begin{equation}\label{center estimate u}
\norm{u_t + v_t - \sqrt{D}\epsilon^{1/2}B_{t,x_0}\overline{m}_{x_0}'}_\infty \leq  c_1 \epsilon^{1-2a}T_\epsilon^2 + \epsilon^{1/2} \norm{R_{1,T_\epsilon,x_0}}_\infty + \epsilon^{1/2} \norm{R_{2,T_\epsilon,x_0}}_\infty 
\end{equation}
Furthermore, by using linear approximation 
\begin{equation} \label{center estimate m bar}
\begin{aligned}
\norm{\overline{m}_{x_0+ \sqrt{D}/2\epsilon^{1/2} B_{T_\epsilon,x_0}} -  \overline{m}_{x_0} - \dfrac{\sqrt{D}}{2}\epsilon^{1/2} B_{T_\epsilon,x_0} \overline{m}_{x_0}'}_\infty \leq  c_2\epsilon B_{T_\epsilon,x_0}^2  
\end{aligned}
\end{equation}
From \eqref{center estimate m bar} and \eqref{center estimate u}, we get
\begin{equation}
\norm{\dfrac{\overline{m}_{1,T_\epsilon} + \overline{m}_{2,T_\epsilon}}{2}- \overline{m}_{x_0 + \sqrt{D}/2\epsilon^{1/2}B_{T_\epsilon,x_0}}}_\infty  \leq c_1 \epsilon^{1-2a}T_\epsilon^2 + 2\epsilon^{1/2-a} + c_2 \epsilon B_{T_\epsilon,x_0}^2
\end{equation}
and by \eqref{first property of center},
\begin{equation}
 \bigg|\overline{\xi}_{T_\epsilon}  - (x_0 +\dfrac{\sqrt{D}}{2}\epsilon^{1/2}B_{T_\epsilon,x_0})\bigg| \leq   c \Bigg( \epsilon^{1-2a}T_\epsilon^2 + 2 \epsilon^{1/2-a} + \epsilon B_{T_\epsilon,x_0}^2 \Bigg)
\end{equation} 
or
\begin{equation}\label{2d ok}
\begin{aligned}
\bigg| \epsilon^{-1/2}T_{\epsilon}^{-1/2}&\bigg( \overline{\xi}_{T_\epsilon} -x_0 \bigg) - \sqrt{\dfrac{D}{2}}\dfrac{B_{T_\epsilon,x_0}}{\sqrt{2} T_{\epsilon}^{1/2}}\bigg|\\
&\leq   c \Bigg( \epsilon^{1/2-2a}T_\epsilon + 2 T_\epsilon^{-1/2}\epsilon^{-a} + \epsilon^{1/2}T_{\epsilon}^{-1/2} B_{T_\epsilon,x_0}^2 \Bigg)
\end{aligned}
\end{equation}
Since the RHS $\rightarrow 0$ as $\epsilon \rightarrow 0$ and $\dfrac{1}{\sqrt{2}} \dfrac{B_{T_\epsilon,x_0}}{T_\epsilon^{1/2}} \sim  N(0,\dfrac{D_{\epsilon} + D_{\epsilon}}{2}) \sim N(0,D_\epsilon) \approx N(0,1)$, we derive \eqref{1D: estimate center p/2}.
\end{proof}
\end{lemma}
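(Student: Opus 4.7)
The plan is to prove the two claims by entirely different arguments. For the near-vanishing of $E^{\epsilon}(\overline{\xi}_{T_\epsilon}-x_0)$, I would exploit the reflection symmetry of the instanton. Since $V$ is even, $V'$ is odd; since $\overline{m}_{x_0}(2x_0-x)=-\overline{m}_{x_0}(x)$, the map $(Gf)(x):=-f(2x_0-x)$ leaves the instanton invariant and intertwines the linear coupling $\lambda(m_{2}-m_{1})$. If the process $(\overline{m}_{1,t},\overline{m}_{2,t})$ were exactly $G$-equivariant in law, symmetry of $\xi(\cdot)$ under $G$ (which reflects the $L^2$ inner product with $\overline{m}_{x_0}'$) would force $E^{\epsilon}(\overline{\xi}_{T_\epsilon}-x_0)=0$. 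The obstruction is that the global heat kernel $H_t$ on $\mathbb{R}$ does not respect the $G$-symmetry once reflected initial data is taken into account on the bounded domain. I would therefore introduce the localized kernel $\tilde H_t$ that truncates the integration variable $y$ to $|y-x_0|\leq 10^{-4}\zeta\epsilon^{-1}$ and the corresponding auxiliary process $(\tilde m_{1,t},\tilde m_{2,t})$ with noise $\tilde Z$; for this process $G$-equivariance holds exactly, giving $E^{\epsilon}(\tilde\xi_{T_\epsilon}-x_0)=0$. Standard parabolic/Gaussian tail estimates show $\|Z_{i,t}-\tilde Z_{i,t}\|_{\infty}$ on $\{|x-x_0|\leq 10^{-5}\zeta\epsilon^{-1}\}$ is $O(e^{-\delta\epsilon^{-1}})$ w.h.p., and iterating the integral equation propagates this bound to $\|\overline{m}_{i,t}-\tilde m_{i,t}\|_{\epsilon}\leq e^{-\delta_1\epsilon^{-1}}$. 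Finally, property \eqref{3.7} of the center converts this into $|\overline{\xi}_{T_\epsilon}-\tilde\xi_{T_\epsilon}|\leq e^{-\delta_2\epsilon^{-1}}$, whence the exponential bound on the mean.

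For the $p$-th moment bound, I would pass to the linearized system \eqref{eqn: uu} with $u_0=v_0=0$. Writing $u_t+v_t$ and $u_t-v_t$ separately as in Corollary \ref{corro 2D}: the antisymmetric combination decays as $e^{-2\lambda t}$ and contributes only a higher-order remainder, while the symmetric combination is dominated by the noise $W_{1,t,x_0}+W_{2,t,x_0}$. Using Proposition \ref{prop noise} to decompose each $W_{i,t,x_0}=B_{i,t,x_0}\tilde m_{x_0}'+R_{i,t}$, and absorbing the nonlinearities into a remainder of size $O(\epsilon^{1-2a}T_\epsilon^2)$ (controlled on the event $G_{i,\epsilon}(a)$ via a contraction argument as in Proposition \ref{prop 3.4}), I obtain
\begin{equation*}
\Big\|\tfrac{1}{2}(\overline{m}_{1,T_\epsilon}+\overline{m}_{2,T_\epsilon})-\overline{m}_{x_0}-\tfrac{\sqrt{D}}{2}\epsilon^{1/2}B_{T_\epsilon,x_0}\,\overline{m}_{x_0}'\Big\|_\infty\leq c\epsilon^{1-2a}T_\epsilon^2+c\epsilon^{1/2-a},
\end{equation*}
with $B_{T_\epsilon,x_0}=B_{1,T_\epsilon,x_0}+B_{2,T_\epsilon,x_0}$ Gaussian. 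Combining with a first-order Taylor expansion of $\overline{m}_{x_0+h}$ around $x_0$ and then applying \eqref{first property of center} (or more sharply \eqref{second property  of center}) yields
\begin{equation*}
\overline{\xi}_{T_\epsilon}-x_0=\tfrac{\sqrt{D}}{2}\epsilon^{1/2}B_{T_\epsilon,x_0}+\text{Err}_{\epsilon},
\end{equation*}
where $\|\text{Err}_\epsilon\|$ is a small polynomial power of $\epsilon$ on a set of probability $1-c_n\epsilon^n$. Since $B_{T_\epsilon,x_0}$ is Gaussian of variance $\asymp T_\epsilon$, Gaussian $p$-moments give $E(|B_{T_\epsilon,x_0}|^p)\leq C_p T_\epsilon^{p/2}$, hence the desired $C_p(\epsilon T_\epsilon)^{p/2}$ bound; on the exceptional event $\overline{\xi}_{T_\epsilon}$ is trivially bounded by $\epsilon^{-1}$, producing a negligible $\epsilon^n$ contribution to all moments.

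The main obstacle I anticipate is making the non-Gaussian remainder $\text{Err}_\epsilon$ small not just in probability but in every $L^p$, because the naive sup bounds on $u_t,v_t$ fail on the bad set $G_{i,\epsilon}(a)^c$. The cleanest route is the two-layer strategy already used in Proposition \ref{prop 3.4}: on the good event control the remainder deterministically via a Gronwall/fixed-point argument, and on the bad event (whose probability is super-polynomially small) use the a priori bound $|\overline{\xi}_{T_\epsilon}-x_0|\leq 2\epsilon^{-1}$ together with $P^\epsilon(G_{i,\epsilon}(a)^c)\leq c_n\epsilon^n$ to swallow its moment contribution. A secondary delicate point is the interaction between the localization used for the symmetry argument and the linearization used for the moment estimate, but because both analyses take place on $|x-x_0|\leq 10^{-5}\zeta\epsilon^{-1}$ and the complementary region contributes only exponentially in $\epsilon$, these two steps can be carried out independently without coupling.
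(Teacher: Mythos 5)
Your proposal is correct and follows essentially the same route as the paper: the reflection map $G$ combined with the localized kernel $\tilde H_t$ and the center-continuity estimate \eqref{3.7} for the exponentially small mean, and the linearization with $u_0=v_0=0$, the noise decomposition of Proposition \ref{prop noise}, the Taylor expansion of $\overline{m}_{x_0+h}$, and property \eqref{first property of center} to reduce the $p$-th moment to Gaussian moments of $B_{T_\epsilon,x_0}$. Your explicit handling of the exceptional event via the a priori bound $|\overline{\xi}_{T_\epsilon}-x_0|\leq 2\epsilon^{-1}$ and $P^\epsilon(G_{i,\epsilon}(a)^c)\leq c_n\epsilon^n$ is in fact slightly more careful than the paper, which passes over this point silently.
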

\end{proof}

\section{Appendix}
\subsection{Comparision theorem}
\begin{theorem}\label{2D:Comparision theorem} (Comparision theorem).

Let $(m_{1,t},m_{2,t})$ and $(\overline{m}_{1,t},\overline{m}_{2,t})$ solve  \eqref{eqn: integral_on_R_1} with initial data $(m_{1,0},m_{2,0})$ and $(\overline{m}_{1,0},\overline{m}_{2,0})$ and with the same noise $Z_{1,t}$ and $Z_{2,t}$ respectively. If $m_{i,0} \geq \overline{m}_{i,0} \; \forall i=1,2$, then $m_{i,t} \geq \overline{m}_{i,t} \quad \forall t\geq 0 \quad \forall i=1,2$.
\begin{proof} By using basic expansions, we derive for any $a>0$
\begin{equation}
\begin{aligned}
m_{1,t} = e^{-at}H_t m_{1,0} &+ \int_0^t e^{-a(t-s)}H_{t-s}\bigg( -V'(m_{1,s})+(a-\lambda)m_{1,s} + \lambda m_{2,s} \bigg)ds \\
&+ \epsilon^{1/2}\bigg(Z_{1,t} - a \int_0^t e^{-a(t-s)}H_{t-s}Z_{1,s}ds\bigg)
\end{aligned}
\end{equation}
\begin{equation}
\begin{aligned}
m_{2,t} = e^{-at}H_t m_{2,0} &+ \int_0^t e^{-a(t-s)}H_{t-s}\bigg( -V'(m_{2,s})+(a-\lambda)m_{2,s} + \lambda m_{1,s} \bigg)ds \\
&+ \epsilon^{1/2}\bigg(Z_{2,t} - a \int_0^t e^{-a(t-s)}H_{t-s}Z_{2,s}ds\bigg)
\end{aligned}
\end{equation}
Setting $w_{i,t} = m_{i,t} - \overline{m}_{i,t} \quad \forall i=1,2$,
\begin{equation}\label{2D: iterating method}
\begin{aligned}
&w_{1,t} = H_t w_{1,0} + \int_0^t H_{t-s} \bigg( (F_{1,s}+ a- \lambda)w_{1,s} + \lambda w_{2,s} \bigg)ds \\
&w_{2,t} = H_t w_{2,0} + \int_0^t H_{t-s} \bigg( (F_{2,s}+ a- \lambda)w_{2,s} + \lambda w_{1,s} \bigg)ds \\
\end{aligned}
\end{equation}
where 
\begin{equation}
F_{i,s} =  - \dfrac{V'(m_{i,s}) - V'(\overline{m}_{i,s})}{m_{i,s}-\overline{m}_{i,s}} \quad \forall i=1,2 
\end{equation}
Since $m_{i,t}, \overline{m}_{i,t}$ are bounded $\forall t>0$, this also implies that $F_{1,s}$ and $F_{2,s}$ are bounded, so there exists $a>0$ such that $F_{i,s} + a -\lambda > 0 \quad \forall s \leq T \quad \forall i=1,2$. By applying iterating method in \eqref{2D: iterating method} with $F_{1,s}$ and $F_{2,s}$ are known, we get a series of non negative terms, which implies the result.  
\end{proof}
\end{theorem}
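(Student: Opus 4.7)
The plan is to exploit the fact that the noises cancel when we subtract the equations for $(m_{1,t}, m_{2,t})$ and $(\overline{m}_{1,t}, \overline{m}_{2,t})$, leaving a linear system for the differences $w_{i,t} = m_{i,t} - \overline{m}_{i,t}$ with a positivity-preserving structure, which can then be iterated.

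First, I would rewrite each integral equation in the ``damped'' form by introducing a sufficiently large constant $a>0$: adding and subtracting $a\,m_{i,t}$ inside the drift yields
\begin{equation*}
m_{i,t} = e^{-at}H_t m_{i,0} + \int_0^t e^{-a(t-s)}H_{t-s}\bigl(-V'(m_{i,s}) + (a-\lambda)m_{i,s} + \lambda m_{j,s}\bigr)ds + \sqrt{\epsilon}\,\tilde Z_{i,t},
\end{equation*}
where $\{i,j\}=\{1,2\}$ and $\tilde Z_{i,t}$ is a modified noise term that depends only on $\alpha_i$ and not on the initial data. The same identity holds for $\overline{m}_{i,t}$ with the same $\tilde Z_{i,t}$. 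Subtracting and using that $V'(m_{i,s})-V'(\overline{m}_{i,s}) = -F_{i,s}\,(m_{i,s}-\overline{m}_{i,s})$ with $F_{i,s} := -(V'(m_{i,s})-V'(\overline{m}_{i,s}))/(m_{i,s}-\overline{m}_{i,s})$, the noise cancels and we obtain a closed linear integral system for $(w_{1,t}, w_{2,t})$ driven by the (positive) heat kernel $H_t$ and by the coefficients $F_{i,s}+a-\lambda$ and $\lambda$, as in \eqref{2D: iterating method}.

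Next I would verify the positivity of the coefficients. Standard a priori bounds on the SPDE solutions (for instance via the arguments used to establish existence/uniqueness and the $\|m\|_\infty \leq 2$ type bounds alluded to elsewhere in the paper) show that on any fixed time interval $[0,T]$ the quantities $m_{i,s}$ and $\overline{m}_{i,s}$ are uniformly bounded, hence $F_{i,s}$ is uniformly bounded. Choose $a$ so large that $F_{i,s} + a - \lambda \geq 0$ for all $s \leq T$ and $i=1,2$. Together with $\lambda \geq 0$ and $H_t \geq 0$, the integral operator appearing in the equation for $(w_{1,t},w_{2,t})$ is then positivity preserving.

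Finally, I would run a Picard iteration: define $w_{i,t}^{(0)} := H_t w_{i,0} \geq 0$ (non-negative because $w_{i,0} \geq 0$ and the heat kernel is positive) and
\begin{equation*}
w_{i,t}^{(n+1)} := H_t w_{i,0} + \int_0^t H_{t-s}\bigl((F_{i,s}+a-\lambda)w_{i,s}^{(n)} + \lambda w_{j,s}^{(n)}\bigr)ds.
\end{equation*}
An induction on $n$ shows $w_{i,t}^{(n)} \geq 0$ for all $n$, and standard contraction/Gronwall arguments give convergence to the unique solution $w_{i,t}$, which is therefore non-negative. Since $T$ was arbitrary, this yields $m_{i,t} \geq \overline{m}_{i,t}$ for all $t \geq 0$. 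The main subtlety is the uniform-in-$s$ boundedness of $F_{i,s}$, which rests on the a priori $L^\infty$ control of the SPDE solutions; once that is in hand, the argument is essentially the same as the deterministic comparison principle for reaction-diffusion systems with cooperative linear coupling.
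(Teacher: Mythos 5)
Your proposal is correct and follows essentially the same route as the paper's proof: introduce the damping constant $a$, subtract the two systems so the common noise cancels, write the difference via the mean-value quotient $F_{i,s}$, use boundedness of the solutions to choose $a$ with $F_{i,s}+a-\lambda\geq 0$, and conclude non-negativity of $w_{i,t}$ by iterating the positivity-preserving integral system. The only difference is that you spell out the Picard iteration and its convergence a bit more explicitly than the paper does.
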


\begin{lemma}
Suppose that $\norm{m_{1,0}}_{\infty}\;$and$\; \norm{m_{2,0}}_{\infty} \leq 1+\dfrac{1}{32(2+\lambda)}$. Then there are $c'$ and $c >0$ so that if $m_1$ and $m_2$ solves $\eqref{eqn: integral_on_R_1}$, then
\begin{equation}\label{bounded by 2}
\begin{aligned}
\mathbb{P}^{\epsilon}\bigg(\sup_{t \leq \epsilon^{-2}} \norm{m_{1,t}}_{\infty} > 2\bigg) \leq c'e^{-c\epsilon^{-1}}  \\
\mathbb{P}^{\epsilon}\bigg(\sup_{t \leq \epsilon^{-2}} \norm{m_{2,t}}_{\infty} > 2\bigg) \leq c'e^{-c\epsilon^{-1}} 
\end{aligned}
\end{equation}
\begin{proof}
By Theorem \ref{2D:Comparision theorem}, it is sufficient to solve the problem for the special case $m_1(x,0)=m_2(x,0)=1+\dfrac{1}{32(2+\lambda)}$. 
Replacing $\overline{u}_t = m_{1,t} - 1$ and $\overline{v}_t=m_{2,t} - 1$ into \eqref{classical system}, then
\begin{equation}
\partial_t \overline{u}_t -\dfrac{1}{2}\partial_{xx}\overline{u}_t + (2+\lambda)\overline{u}_t = -3\overline{u}_t^{2} -\overline{u}_t^3 + \lambda \overline{v}_t + \sqrt{\epsilon} \dot{\alpha}_{1,t}
\end{equation}
\begin{equation}
\partial_t \overline{v}_t -\dfrac{1}{2}\partial_{xx}\overline{v}_t + (2+\lambda)\overline{v}_t = -3\overline{v}_t^2 -\overline{v}_t^3 + \lambda \overline{u}_t + \sqrt{\epsilon} \dot{\alpha}_{2,t}
\end{equation}
\begin{equation}
\overline{u}(x,0) = \overline{v}(x,0) = \dfrac{1}{32(2+\lambda)}
\end{equation}
It is seen that $(\partial_t  - \dfrac{1}{2}\partial_{xx} + (2+ \lambda)Id)^{-1} = e^{-(2+\lambda)t}H_t^{(\epsilon)}$ and the corresponding integral equation for $u_t$ and $v_t$ is 
\begin{equation}\label{1D: compare u,v}
\begin{aligned}
&\overline{u}_t = e^{-(2+\lambda)t}H_t^{(\epsilon)} \overline{u}_0 + \int_0^t e^{-(2+\lambda)(t-s)}H_{t-s}\Big(-3\overline{u}_s^2 - \overline{u}_s^3 + \lambda \overline{v}_s\Big)ds + \sqrt{\epsilon}W_t \\
&\overline{v}_t = e^{-(2+\lambda)t}H_t^{(\epsilon)} \overline{v}_0 + \int_0^t{e^{-(2+\lambda)(t-s)}}H_{t-s}\Big(-3\overline{v}_s^2 - \overline{v}_s^3 + \lambda \overline{u}_s\Big)]ds + \sqrt{\epsilon}Q_t
\end{aligned}
\end{equation}
where
\begin{equation}
W_t(x) = \int_0^t \int_{-\epsilon^{-1}}^{\epsilon^{-1}}  e^{-(2+\lambda)(t-s)}H_{t-s}^{(\epsilon)}(x,y)\dot{\alpha_1}(y,s)dyds
\end{equation}
\begin{equation}
Q_t(x) = \int_0^t \int_{-\epsilon^{-1}}^{\epsilon^{-1}}  e^{-(2+\lambda)(t-s)}H_{t-s}^{(\epsilon)}(x,y)\dot{\alpha_2}(y,s)dyds
\end{equation}
here $H_t^{(\epsilon)}$ is the heat operator with Neumann boundary condition on $\Omega_\epsilon$. The boundedness of Gaussian processes $W_t$ and $Q_t$ is investigated in \cite{Walsh1981}, and there are positive constants $k_1$ and $k_2$ such that 
\begin{equation}
 \mathbb{P}^{\epsilon} \bigg(\sup_{t \leq \epsilon^{-2}, x \in \mathbb{R}}|\sqrt{\epsilon}W_t(x)| > \dfrac{1}{32(2+\lambda)} \bigg) \leq k_1 e^{-k_2 \epsilon^{-1}}
\end{equation}
\begin{equation}
\mathbb{P}^{\epsilon} \bigg(\sup_{t \leq \epsilon^{-2}, x \in \mathbb{R}}|\sqrt{\epsilon}Q_t(x)| > \dfrac{1}{32(2+\lambda)}\bigg) \leq k_1 e^{-k_2 \epsilon^{-1}}
\end{equation}
Denoting by
\begin{equation}
T_{\delta} = \inf{ \lbrace t \geq 0 : \sup_{0\leq s \leq t}\norm{\overline{u}_s}_{\infty} + \sup_{0 \leq s \leq t}\norm{\overline{v}_s}_{\infty} \geq \delta \rbrace}
\end{equation} 
\begin{equation}
\dfrac{1}{16(2+\lambda)}=\overline{u}_0 + \overline{v}_0 < \delta < 1
\end{equation}
Then
\begin{equation}
\begin{split}
&\mathbb{P}^{\epsilon}\bigg(\sup_{t \leq \epsilon^{-2}} \norm{m_{1,t}}_{\infty} > 2 \bigg) \\
&\leq \mathbb{P}^{\epsilon}\bigg(\sup_{t \leq \epsilon^{-2}}  \norm{\overline{u}_t}_{\infty} > 1 \bigg) \\
&\leq \mathbb{P}^{\epsilon} \bigg(T_{\delta} \leq \epsilon^{-2}\bigg)\\
&\leq \mathbb{P}^{\epsilon} \bigg(T_{\delta} \leq \epsilon^{-2},\sup_{t \leq \epsilon^{-2}}{\norm{\sqrt{\epsilon}W_t}_{\infty}} \leq \dfrac{1}{32(2+\lambda)}, \sup_{t \leq \epsilon^{-2}}{\norm{\sqrt{\epsilon}Q_t}_{\infty}} \leq \dfrac{1}{32(2+\lambda)} \bigg) \\
&+ \mathbb{P}^{\epsilon} \bigg(T_{\delta} \leq \epsilon^{-2},\sup_{t \leq \epsilon^{-2}}{\norm{\sqrt{\epsilon}W_t}_{\infty}} > \dfrac{1}{32(2+\lambda)} \cup \sup_{t \leq \epsilon^{-2}}{\norm{\sqrt{\epsilon}Q_t}_{\infty}} > \dfrac{1}{32(2+\lambda)} \bigg) \\
&=: \mathbb{P}^{\epsilon}(A) + \mathbb{P}^{\epsilon}(B).
\end{split}
\end{equation}
$\mathbb{P}^{\epsilon}(B)$ is estimated as follows
\begin{equation}
\begin{split}
\mathbb{P}^{\epsilon}(B) &\leq \mathbb{P}^{\epsilon} \bigg(\sup_{t \leq \epsilon^{-2}}{\norm{\sqrt{\epsilon}W_t}_{\infty}} > \dfrac{1}{32(2+\lambda)} \bigg)+ \mathbb{P}^{\epsilon} \bigg(\sup_{t \leq \epsilon^{-2}}{\norm{\sqrt{\epsilon}Q_t}_{\infty}} > \dfrac{1}{32(2+\lambda)}\bigg) \\
& \leq 2k_1e^{-k_2 \epsilon^{-1}}
\end{split}
\end{equation}
We now claim that $\mathbb{P}^{\epsilon}(A)= 0$ to complete the proof.

If $T \leq \epsilon^{-2}, \; \sup_{t \leq \epsilon^{-2}}{\norm{\sqrt{\epsilon}W_t}_{\infty}} \leq \dfrac{1}{32(2+\lambda)} \;\text{and} \;\sup_{t \leq \epsilon^{-2}}{\norm{\sqrt{\epsilon}Q_t}_{\infty}} \leq \dfrac{1}{32(2+\lambda)}$, from \eqref{1D: compare u,v}, we get
\begin{equation}
\begin{aligned}
&\sup_{0 \leq s \leq T_{\delta}}\norm{\overline{u}_s}_{\infty}\\
 &\leq \dfrac{1}{32(2+\lambda)} + \Big(3\delta^2 + \delta^3 + \lambda \sup_{0 \leq s \leq T_{\delta}} \norm{\overline{v}_s}_\infty\Big) \times \\
 &\times \underbrace{\sup_{x \in \mathbb{R}} \int_0^{T_{\delta}} dse^{-(2+\lambda)(T_{\delta}-s)}\int_{-\infty}^{\infty}dy H_{T_{\delta}-s}(x,y)}_{\leq 1/(2+\lambda)} +\dfrac{1}{32(2+\lambda)} \\
&\leq \dfrac{1}{32(2+\lambda)} + \dfrac{1}{2+\lambda}(3\delta^2 + \delta^3)  + \dfrac{\lambda}{2+\lambda} \sup_{0 \leq s \leq T_{\delta}}\norm{\overline{v}_s}+ \dfrac{1}{32(2+\lambda)}
\end{aligned}
\end{equation}
Similarly,
\begin{equation}
  \sup_{0 \leq s \leq T_{\delta}}\norm{\overline{v}_s}_{\infty} \leq \dfrac{1}{32(2+\lambda)} + \dfrac{1}{2+\lambda}(3\delta^2 + \delta^3)  + \dfrac{\lambda}{2 + \lambda}\sup_{0 \leq s \leq T_{\delta}}\norm{\overline{u}_s}_\infty + \dfrac{1}{32(2+\lambda)}
\end{equation}
Therefore,
\begin{equation}
\begin{aligned}
\sup_{0 \leq s \leq T_{\delta}}\norm{\overline{u}_s}_{\infty}  + \sup_{0 \leq s \leq T_{\delta}}\norm{\overline{v}_s}_{\infty}  = \delta &\leq \dfrac{1}{16(2+\lambda)}+\dfrac{2}{2+\lambda}(3\delta^2 + \delta^3)\\
&+ \dfrac{\lambda}{\lambda+2} \delta + \dfrac{1}{16(2+\lambda)}
\end{aligned}
\end{equation}
Thus
\begin{equation}
\dfrac{2}{\lambda+2} \delta \leq \dfrac{1}{16(2+\lambda)}+ \dfrac{2}{2+\lambda}(3\delta^2 + \delta^3) + \dfrac{1}{16(2+\lambda)}
\end{equation}
which is impossible if  $\delta=\dfrac{1}{8}$.
\end{proof}
\end{lemma}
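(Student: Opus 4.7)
The plan is to reduce to a worst-case initial condition by comparison, isolate a small-noise event, and close a bootstrap inequality for the deviations from the stable phase $m\equiv 1$. By the comparison theorem just proved, it suffices to treat the extremal case $m_{1,0}(x)=m_{2,0}(x)=1+\tfrac{1}{32(2+\lambda)}$, since any admissible initial datum is dominated above by this constant profile (and $-m_{i,t}$ is handled by the same argument applied to the reflected system). Proving $\norm{m_{i,t}}_\infty<2$ in this extremal case will therefore give the conclusion.

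Linearizing about the stable phase by setting $\bar u_t:=m_{1,t}-1$ and $\bar v_t:=m_{2,t}-1$, the expansion $V'(1+\bar u)=2\bar u+3\bar u^2+\bar u^3$ produces a dissipation $-(2+\lambda)\bar u_t$ (curvature $V''(1)=2$ plus self-coupling), a subordinate nonlinearity $-3\bar u_t^2-\bar u_t^3$, and the linear cross term $+\lambda\bar v_t$. Writing the system in mild form with the damped semigroup $e^{-(2+\lambda)t}H_t^{(\epsilon)}$, the noise becomes two Ornstein--Uhlenbeck type Gaussian convolutions $\sqrt\epsilon W_t$, $\sqrt\epsilon Q_t$. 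A standard Walsh-type sup-norm Gaussian estimate yields, with probability $\ge 1-k_1 e^{-k_2\epsilon^{-1}}$, the bounds $\sup_{t\le\epsilon^{-2},\,x}|\sqrt\epsilon W_t(x)|,\;\sup_{t\le\epsilon^{-2},\,x}|\sqrt\epsilon Q_t(x)|\le\tfrac{1}{32(2+\lambda)}$.

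On this small-noise event I would introduce the stopping time
\begin{equation*}
T_\delta:=\inf\bigl\{t\ge 0:\ \norm{\bar u_t}_\infty+\norm{\bar v_t}_\infty\ge\delta\bigr\}
\end{equation*}
with $\delta$ strictly above $\bar u_0+\bar v_0=\tfrac{1}{16(2+\lambda)}$ but strictly below $1$; since $\norm{m_{i,t}}_\infty<2$ is equivalent to $\norm{\bar u_t}_\infty,\norm{\bar v_t}_\infty<1$, it suffices to force $T_\delta>\epsilon^{-2}$, which I would establish by contradiction.

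The hard step is closing the self-improving estimate at $T_\delta$. Using the uniform bound $\sup_x\int_0^t e^{-(2+\lambda)(t-s)}\int H_{t-s}(x,y)\,dy\,ds\le(2+\lambda)^{-1}$ together with $\norm{\bar u_s}_\infty,\norm{\bar v_s}_\infty\le\delta$ for $s\le T_\delta$, the mild equation for $\bar u_t$ gives
\begin{equation*}
\sup_{s\le T_\delta}\norm{\bar u_s}_\infty\le\tfrac{1}{16(2+\lambda)}+\tfrac{3\delta^2+\delta^3}{2+\lambda}+\tfrac{\lambda}{2+\lambda}\sup_{s\le T_\delta}\norm{\bar v_s}_\infty,
\end{equation*}
and symmetrically for $\bar v_s$. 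Summing, transferring the cross $\lambda$-term to the left, and multiplying by $2+\lambda$ cancels the $\lambda$-dependence and leaves $2\delta\le\tfrac{1}{8}+2(3\delta^2+\delta^3)$. For the concrete choice $\delta=\tfrac{1}{8}$ this reads $\tfrac{1}{4}\le\tfrac{1}{8}+\tfrac{25}{256}=\tfrac{57}{256}<\tfrac{64}{256}=\tfrac{1}{4}$, a contradiction. Hence $T_\delta>\epsilon^{-2}$ on the small-noise event, so $\norm{m_{i,t}}_\infty<1+\tfrac{1}{8}<2$ throughout $t\le\epsilon^{-2}$, and combining with the Gaussian tail on the complement gives the claimed exponential bound.
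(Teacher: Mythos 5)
Your proposal is correct and follows essentially the same route as the paper: reduction to the extremal constant datum via the comparison theorem, linearization about the stable phase with the damped semigroup $e^{-(2+\lambda)t}H_t^{(\epsilon)}$, the Walsh-type Gaussian sup-norm bound on the small-noise event, and the stopping-time bootstrap whose $\lambda$-dependence cancels, yielding the same contradiction at $\delta=\tfrac18$. The only differences (instantaneous rather than running-sup norms in the definition of $T_\delta$, and the explicit remark on handling the lower bound by reflection) are cosmetic.
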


\subsection{Barrier lemma}
\begin{prop}\label{proposition of barrier lemma}(The barrier Lemma.) There are $V>0$ and c ($V$ and $c$ does not depend on $\lambda$)so that the following holds. Let $(m_{1,t},m_{2,t})$ and $(m_{1,t}^*,m_{2,t}^*)$ both solve \eqref{eqn: integral_on_R_1}, with initial conditions respectively $(m_{1,0},m_{2,0})$ and $(m_{1,0}^*,m_{2,0}^*)$. Suppose that for some $T>0$ their sup norms for $t\leq T$ are bounded by 2 and that $m_{i,0}(x) = m_{i,0}^*(x)$ for all $|x| \leq VT.$ Then for $i=1,2$
\begin{equation} \label{barrier lemma}
\sup_{t \leq T}|m_{i,t}(0) - m_{i,t}^*(0)| \leq ce^{-T}
\end{equation}
\begin{proof} $m_{1,t} + m_{2,t}$ and $m_{1,t}^* + m_{2,t}^*$ satisfy
\begin{equation} \label{2D: barrier lemma SUM}
\begin{aligned}
&m_{1,t}+m_{2,t} = H_t (m_{1,0}+m_{2,0}) - \int_0^t H_{t-s}(V'(m_{1,s})+V'(m_{2,s}))ds \\
&m_{1,t}^* + m_{2,t}^* = H_t (m_{1,0}^*+m_{2,0}^*) - \int_0^t H_{t-s}(V'(m_{1,s}^*)+V'(m_{2,s}^*))ds
\end{aligned} 
\end{equation}
Setting 
\begin{equation}
\begin{aligned}
&p_{t} := m_{1,t}+m_{2,t}-m_{1,t}^*-m_{2,t}^* \\
&q_{t} := m_{1,t}-m_{2,t}-m_{1,t}^*+m_{2,t}^*
\end{aligned}
\end{equation}
By \eqref{2D: barrier lemma SUM} and $\norm{m_{i,t}}_\infty \leq 2$, there is $c_1>0$ such that
\begin{equation}
|p_t| \leq H_t |p_0| + c_1 \int_0^t H_{t-s}\bigg( |m_{1,s}-m_{1,s}^*|+|m_{2,s}-m_{2,s}^*| \bigg)ds  \quad \forall t \leq T 
\end{equation}
$m_{1,t} - m_{2,t}$ and $m_{1,t}^* - m_{2,t}^*$ evolve as
\begin{equation}\label{2D: barrier lemma SUBTRACT}
\begin{aligned}
m_{1,t}-m_{2,t} &= (e^{-2\lambda t}H_t) (m_{1,0}-m_{2,0})\\
&- \int_0^t e^{-2 \lambda (t-s)}H_{t-s}(V'(m_{1,s})-V'(m_{2,s}))ds 
\end{aligned} 
\end{equation}
and
\begin{equation}
\begin{aligned}
m_{1,t}^* - m_{2,t}^* &=  (e^{-2\lambda t}H_t) (m_{1,0}^*-m_{2,0}^*)\\
&- \int_0^t e^{-2 \lambda (t-s)}H_{t-s}(V'(m_{1,s}^*)-V'(m_{2,s}^*))ds
\end{aligned}
\end{equation}
There is $c_2>0$ such that
\begin{equation}
\begin{aligned}
|q_t(x)| \leq &(e^{-2\lambda t}H_t) (|q_0|)(x)\\
&+ c_2\int_0^t e^{-2\lambda (t-s)}H_{t-s}\bigg( |m_{1,s}-m_{1,s}^*|+|m_{2,s}-m_{2,s}^*| \bigg) (x)ds 
\end{aligned}
\end{equation}
Since $e^{-2 \lambda t}$ and $e^{-2\lambda (t-s)} \; (s<t)$ are bounded by $1$, we get 
\begin{equation}
|q_t(x)| \leq (H_t |q_0|)(x) + c_2\int_0^t H_{t-s}\bigg( |m_{1,s}-m_{1,s}^*|+|m_{2,s}-m_{2,s}^*| \bigg) (x)ds 
\end{equation}
Since $m_{1,t}- m_{1,t}^* = \dfrac{1}{2}(p_t + q_t)$ and $m_{2,t}-m_{2,t}^*= \dfrac{1}{2}(p_t - q_t)$, it is easy to derive 
\begin{equation}\label{1D: p_t + q_t}
|m_{1,t} - m_{1,t}^*| + |m_{2,t}-m_{2,t}^*| \leq |p_t| + |q_t|
\end{equation}
As a result, let $w_t = |p_t| + |q_t|  $, there is $c_3 > 0$ such that
\begin{equation} \label{w_t}
w_t(x) \leq (H_tw_0)(x) + c_3\int_0^t ds (H_{t-s}w_s)(x)ds
\end{equation}
Iterating $M$ times \eqref{w_t},
\begin{equation}\label{1D: iterating M times}
\begin{aligned}
w_t(0) &\leq (H_tw_0)(0) + \sum_{n=1}^M \dfrac{(c_3t)^n}{n!}(H_tw_0)(0) + 2 \dfrac{(c_3t)^M}{M!}\\
&\leq e^{c_3 t}(H_t w_0)(0) + 2 \dfrac{(c_3 T)^M}{M!}\quad \forall t\leq T
\end{aligned} 
\end{equation}
By the Stirling formula
\begin{equation}
n! = \sqrt{2 \pi}e^{-n}n^{n+1/2}\bigg( 1+ 0(\dfrac{1}{\sqrt{n}})\bigg)
\end{equation}
The last term in \eqref{1D: iterating M times} is bounded as follows, choosing $M \geq e^2 c_3 t$, then
\begin{equation}\label{2D ss 1}
\begin{aligned}
\dfrac{(c_3 T)^M}{M!} &\leq C \exp \bigg(M \log (c_3 T) -M(\log{M}-1) \bigg) \leq C \exp \bigg({-M \log{\dfrac{M}{ec_3 T}}} \bigg) \\
&\leq C e^{ -e^2c_3 T }
\end{aligned}
\end{equation}
Furthermore,
\begin{equation}\label{2D ss 2}
\begin{aligned}
(H_tw_0)(0) &= \int_{\mathbb{R}} H_t(0,y)w_0(y)dy = \int_{|y| \geq VT}H_t(0,y)w_0(y)dy \\
&\leq c_4 \int_{|y| \geq VT}H_t(0,y)dy \leq 2c_4 \dfrac{1}{V\sqrt{T}}e^{-V^2 T} \quad \forall t \leq T
\end{aligned}
\end{equation}
for some $c_4 > 0$. This combines with \eqref{1D: iterating M times} and \eqref{2D ss 1}, we get
\begin{equation}
w_t(0) \leq 2c_4 \dfrac{1}{V\sqrt{T}} e^{-T(V^2-c_3)} + 2C e^{-e^2 c_3 T}
\end{equation}
Finally, by choosing $V^2 \geq c_3 +1$ and note that \eqref{1D: p_t + q_t}, we obtain \eqref{barrier lemma}.
\end{proof}
\end{prop}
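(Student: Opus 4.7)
The plan is to exploit the fact that the two solutions are driven by the \emph{same} noises $Z_{1,t},Z_{2,t}$, so subtracting the integral equations \eqref{eqn: integral_on_R_1} annihilates the stochastic terms entirely and reduces the statement to a deterministic estimate on $\Delta_i(x,t) := m_{i,t}(x) - m_{i,t}^*(x)$. To cope with the linear coupling $\lambda$ without losing uniformity in $\lambda$ (the conclusion requires $V,c$ independent of $\lambda$), I would diagonalise by introducing
\begin{equation*}
p_t := \Delta_1 + \Delta_2, \qquad q_t := \Delta_1 - \Delta_2.
\end{equation*}
The terms $\lambda(m_{2,s}-m_{1,s})-\lambda(m_{2,s}^*-m_{1,s}^*)$ cancel identically in the equation for $p_t$, while in the equation for $q_t$ they combine into a purely dissipative contribution $-2\lambda q_t$. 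In the Duhamel representation for $q_t$ this gives a factor $e^{-2\lambda(t-s)}\leq 1$ which I will simply discard, leaving every subsequent constant independent of $\lambda$.

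With these variables in hand I would linearise the cubic nonlinearity. Since the hypothesis forces $\|m_{i,s}\|_\infty,\|m_{i,s}^*\|_\infty\leq 2$ for $s\leq T$, the function $V'(m)=m^3-m$ is Lipschitz with a universal constant $C_1$ on the relevant range, so $|V'(m_{i,s})-V'(m_{i,s}^*)|\leq C_1|\Delta_i|$. Substituting this into the Duhamel formulas for $p_t$ and $q_t$ and taking absolute values, $w_t := |p_t|+|q_t|$ satisfies a scalar Volterra inequality
\begin{equation*}
w_t(x) \;\leq\; (H_t w_0)(x) + C \int_0^t (H_{t-s}w_s)(x)\, ds, \qquad t\leq T,
\end{equation*}
with $H_t$ the heat semigroup on $\mathbb{R}$ and all constants $\lambda$-independent. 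The support hypothesis $w_0\equiv 0$ on $\{|x|\leq VT\}$ then yields a Gaussian tail bound at the origin,
\begin{equation*}
(H_t w_0)(0) \;\leq\; \|w_0\|_\infty \int_{|y|\geq VT}\frac{e^{-y^2/(2t)}}{\sqrt{2\pi t}}\, dy \;\lesssim\; \frac{1}{V\sqrt{T}}\,e^{-V^2 T/2},
\end{equation*}
valid for all $t\leq T$.

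The final step is to propagate this tail bound through the Volterra iteration. Plugging the inequality into itself $M$ times at $x=0$, and using the crude a priori bound $\|w_s\|_\infty\leq 4$ to close the remainder, one obtains
\begin{equation*}
w_T(0) \;\leq\; e^{CT}(H_T w_0)(0) + 2\|w_0\|_\infty\,\frac{(CT)^M}{M!}.
\end{equation*}
I would then tune the two free parameters to squeeze $e^{-T}$ out of both pieces: choosing $V^2 \geq 2(C+1)$ lets the Gaussian factor dominate the $e^{CT}$ prefactor, and choosing $M\sim e^2 CT$ bounds the residual by $C'\exp(-e^2 CT)$ via Stirling, which is again well below $e^{-T}$. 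Since $|\Delta_i|\leq w_t$, the estimate transfers to each component at $x=0$. The main difficulty I expect is really just the careful bookkeeping that keeps every constant $\lambda$-uniform: the sum/difference diagonalisation performed at the start is precisely what rules out any hidden $\lambda$-dependence in the Lipschitz bound and the iteration constants.
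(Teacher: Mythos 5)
Your proposal is correct and follows essentially the same route as the paper: the same sum/difference variables $p_t,q_t$ killing the $\lambda$-coupling (with $e^{-2\lambda(t-s)}\leq 1$ discarded in the $q$-equation), the same Lipschitz bound on $V'$ over $[-2,2]$, the same scalar Volterra inequality for $w_t=|p_t|+|q_t|$ iterated $M\sim e^2CT$ times with Stirling, and the same Gaussian-tail estimate on $(H_tw_0)(0)$ with $V$ chosen large to beat the $e^{CT}$ prefactor. The only differences are cosmetic constants (e.g.\ your more careful $e^{-V^2T/2}$ tail versus the paper's $e^{-V^2T}$), which do not affect the argument.
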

\begin{corro}
If 
\begin{equation}
m_{i,0}(x) = m_{i,0}^*(x) \quad \forall\; |x|\leq L+VT \quad \forall i=1,2
\end{equation} 
then
\begin{equation}
\sup_{t \leq T}\sup_{|x|\leq L}|m_{i,t}(x) - m_{i,t}^*(x)| \leq c e^{-T} \quad \forall i=1,2
\end{equation}
\end{corro}
\section*{Acknowledgement}
The author is grateful to Prof. Errico Presutti for having suggested the problem and useful discussions.
\phantomsection
\addcontentsline{toc}{chapter}{Bibliography}\bibliographystyle{plain}
\bibliography{Thesis_ref}
\Addresses
\end{document}